\def\ppen{\penalty 300}
\let\col=\colon
\def\colon{\col\ppen}
\theoremstyle{plain} 
\newtheorem{thm}{Theorem}[section]
\newtheorem{prop}[thm]{Proposition}
\newtheorem{lem}[thm]{Lemma}
\newtheorem{cor}[thm]{Corollary}
\theoremstyle{definition}
\newtheorem{defn}[thm]{Definition}
\newtheorem{rem}[thm]{Remark}
\newtheorem{ex}[thm]{Example}
\newtheorem{quest}[thm]{Question}
\newtheorem{conj}[thm]{Conjecture}
\newtheorem{alg}[thm]{Algorithm}
\numberwithin{equation}{section}
\theoremstyle{plain}
\newenvironment{customthm}[1]
  {\innercustomthm}
  {\endinnercustomthm}
\renewcommand{\theta}{\vartheta}
\renewcommand{\phi}{\varphi}
\renewcommand{\epsilon}{\varepsilon}
\renewcommand{\subset}{\subseteq}
\renewcommand{\supset}{\supseteq}
\newcommand{\N}{\mathbb N}
\newcommand{\Z}{\mathbb Z}
\newcommand{\R}{\mathbb R}
\newcommand{\C}{\mathbb C}
\DeclareMathOperator{\GL}{GL}
\DeclareMathOperator{\Mor}{Mor}
\DeclareMathOperator{\spanlin}{span}
\DeclareMathOperator{\Lrot}{Lrot}
\DeclareMathOperator{\Rrot}{Rrot}
\DeclareMathOperator{\rl}{rl}
\DeclareMathOperator{\id}{id}
\DeclareMathOperator{\Tr}{Tr}
\DeclareMathOperator{\perm}{perm}
\renewcommand{\det}{\mathop{\rm det}\nolimits}
\newcommand{\Cat}{\mathscr{C}}
\newcommand{\Gat}{\mathscr{G}}
\newcommand{\Kat}{\mathscr{K}}
\newcommand{\Dat}{\mathscr{D}}
\newcommand{\nlin}{_{N\mathchar `\-\mathrm{lin}}}
\newcommand{\flin}{_{4\mathchar `\-\mathrm{lin}}}
\newcommand{\KG}{\mathbf{K}}
\newcommand{\HG}{\mathbf{H}}
\newcommand{\T}{\mathcal{T}}
\newcommand{\F}{\mathcal{F}}
\newcommand{\MG}{\mathbf{M}}
\newcommand{\XG}{\mathbf{X}}
\newcommand{\nullG}{\mathbf{0}}
\newcommand{\Mat}{\mathsf{Mat}}
\newcommand{\staralg}{\mathop{\rm\ast\mathchar `\-alg}}
\newcommand{\Lin}{\mathscr{L}}
\newcommand{\Part}{\mathscr{P}}
\newcommand{\GWfour}{\Graph{
\GE 0.3/0.5:1/2.5,2/2.5,3/2.5,4/2.5;
\GE 0.3/0:1/,2/,3/,4/;
\GW 0.5:2.5;
}}
\begin{document}
\title{Free quantum analogue of Coxeter group $D_4$}
\author{Daniel Gromada}
\address{Saarland University, Fachbereich Mathematik, Postfach 151150,
66041 Saarbr\"ucken, Germany}
\email{gromada@math.uni-sb.de}
\date{\today}
\subjclass[2010]{20G42 (Primary); 05C25, 18D10 (Secondary)}
\keywords{}
\thanks{The author was supported by the collaborative research centre SFB-TRR~195 ``Symbolic Tools in Mathematics and their Application''.}
\thanks{I would like to thank Moritz Weber for giving me the idea to look for quantum analogues of Coxeter groups of type $D$. I thank him also for reading my work and providing valuable comments. I would also like to thank Simon Schmidt for discussions regarding quantum symmetries of demihypercubes and folded hypercubes.}

\begin{abstract}
We define the quantum group $D_4^+$ -- a free quantum version of the demihyperoctahedral group $D_4$ (the smallest representative of the Coxeter series $D$). In order to do so, we construct a free analogue of the property that a $4\times4$ matrix has determinant one. Such analogues of determinants are usually very hard to define for free quantum groups in general and our result only holds for the matrix size $N=4$. The free $D_4^+$ is then defined by imposing this generalized determinant condition on the free hyperoctahedral group $H_4^+$. Moreover, we give a detailed combinatorial description of the representation category of $D_4^+$.
\end{abstract}

\maketitle
\section*{Introduction}
Quantum groups constitute a~generalization of the concept of a~group in non-commutative geometry. In this work, we deal with compact quantum groups as defined by Woronowicz in \cite{Wor87}. Many examples of quantum groups arise by deforming the commutativity relation in the associated Hopf $*$-algebra $O(G)$ (in particular, we have the $q$-deformations such as $U_N^q$, $SU_N^q$, $O_N^q$, $SO_N^q$ and so on), other examples of quantum groups are defined by liberating the commutativity relations. Here the canonical examples are the so-called \emph{free} quantum groups such as the free orthogonal group $O_N^+$, the free symmetric group $S_N^+$ or the free hyperoctahedral group $H_N^+$.

Although we have a~free analogue of $O_N$, we have no free analogue of $SO_N$; although we have the free symmetric group, we have no free alternating group; and although we have the free analogues for the Coxeter series $A$ (the symmetric group), $B$ and $C$ (the hyperoctahedral group), we have so far no free analogue of the Coxeter groups of type $D$. All the mentioned examples, where the definition of a~free counterpart is missing, have one thing in common: Classically, as matrix groups, they can be obtained as normal subgroups imposing some kind of a~determinant condition $\det M=1$ (which indeed always defines a subgroup since determinant is multiplicative; in other words, it is a one dimensional representation). So, the reason what makes it hard to find a~suitable free analogues for these groups is that there is no free determinant. More precisely, there is no non-trivial one-dimensional representation of the free quantum groups such as $O_N^+$, $H_N^+$ or $S_N^+$ (see e.g.\ \cite[Thm.~5.1.1]{Fre17}). Although it is not possible to find some one-dimensional representation of the free quantum groups being the free analogue of the determinant, it still can make sense just to look for a free analogue of the relations $\det=1$.

Let $G$ be a group and $G^+$ a quantum group. We say that $G^+$ is a \emph{liberated version} of $G$ and that $G$ is a \emph{classical version} of $G^+$ if $G$ is a quantum subgroup of $G^+$ given by imposing commutativity in the associated Hopf algebra. Now we would like to say that $G^+$ is a \emph{free} quantum group if $O(G)$ contains no commutativity relations at all. It is not entirely clear, how precisely the formal definition of this statement should look like. In case of homogeneous groups (i.e.\ $S_N\subset G$), a~natural condition is to require that the representation category of $G^+$ can be described by some linear category of non-crossing partitions. In this article, we define a quantum group $D_4^+$, which forms a free analogue of the Coxeter group of type $D$ of size $N=4$ in this above mentioned sense. This is the first main result of this work formulated in Section~\ref{sec.perm}:

\begin{customthm}{A}[Theorem \ref{T.D}]
We have
$$D_4\subsetneq D_4^+\subsetneq H_4^+.$$
The subgroup $D_4$ is obtained from $D_4^+$ imposing commutativity or $(-1)$-commutativity on the entries of the fundamental representation.
\end{customthm}

In Sections \ref{sec.graph} and \ref{sec.finite}, we are looking for a combinatorial description of the representation category of this new quantum group $D_4^+$. In the following theorem, we summarize the main results. The strategy and goals for those two sections are summarized more in detail in the beginning of Section~\ref{sec.graph}. We use the framework of bilabelled graphs recently introduced in \cite{MR19}, see Sect.~\ref{secc.graphcat} for a summary.

\begin{customthm}{B}[Propositions \ref{P.cat}, \ref{P.FCCat}, Theorem \ref{T.Cfin}]
For every $k,l\in\N_0$, let $C(k,l)$ be the set of all bilabelled graphs $\KG=(K,\mathbf{a},\mathbf{b})$ that satisfy the following conditions
\begin{enumerate}
\renewcommand{\theenumi}{\roman{enumi}}
\item $\KG$ is planar,
\item $\tilde d_v$ is even for all $v\in V(K)$, ($\tilde d_v$ is the degree counting also the output strings, see Def.~\ref{D.gendeg})
\item $K$ is bipartite and all the vertices $a_1,\dots,a_k,b_1,\dots,b_l$ are elements of one of the parts.
\item There is no vertex $v$ with $d_v=\tilde d_v=2$.
\item There are no multiple edges.
\item Every component of $\KG$ has at least one vertex among $(\mathbf{a},\mathbf{b})$.
\end{enumerate}
Then the following holds:
\begin{enumerate}
\item The sets $C(k,l)$ are finite for every $k,l$.
\item They model the representation theory of $D_4^+$ through the functor $T^A$, where $A$ is a matrix given by $A_{ij}=\delta_{ij}-1/2$. That is, if we denote by $u$ the fundamental representation of $D_4^+$, then
$$\Mor(u^{\otimes k},u^{\otimes l})=\spanlin\{T_\KG^A\mid \KG\in C(k,l)\}.$$
\end{enumerate}
\end{customthm}

The set $C$ is not closed under the graph category operations introduced in \cite{MR19}. Nevertheless, we can make it into a category by slightly modifying those operations (taking certain quotient essentially). In this way, we obtain a very interesting diagram category, which is worth studying on its own. In Section \ref{sec.open}, we suggest some directions for further research and state some open problems regarding this category.

\section{Preliminaries}
\label{sec.prelim}

In this section we recall the basic notions of compact matrix quantum groups and Tannaka--Krein duality. For a~more detailed introduction, we refer to the monographs \cite{Tim08,NT13}.

\subsection{Compact matrix quantum groups}
\label{secc.qgdef}
A~\emph{compact matrix quantum group} is a pair $G=(A,u)$, where $A$ is a~$*$-algebra and $u=(u_{ij})\in M_N(A)$ is a matrix with values in $A$ such that
\begin{enumerate}
\item the elements $u_{ij}$ $i,j=1,\dots, N$ generate $A$,
\item the matrices $u$ and $u^t=(u_{ji})$ are similar to unitary matrices,
\item the map $\Delta\colon A\to A\otimes A$ defined as $\Delta(u_{ij}):=\sum_{k=1}^N u_{ik}\otimes u_{kj}$ extends to a~$*$-homomorphism.
\end{enumerate}

Compact matrix quantum groups introduced by Woronowicz \cite{Wor87} are generalizations of compact matrix groups in the following sense. For a~matrix group $G\subseteq M_N(\C)$, we define $u_{ij}\colon G\to\C$ to be the coordinate functions $u_{ij}(g):=g_{ij}$. Then we define the \emph{coordinate algebra} $A:=O(G)$ to be the algebra generated by $u_{ij}$. The pair $(A,u)$ then forms a compact matrix quantum group. The so-called \emph{comultiplication} $\Delta\colon O(G)\to O(G)\otimes O(G)$ dualizes matrix multiplication on $G$: $\Delta(f)(g,h)=f(gh)$ for $f\in O(G)$ and $g,h\in G$.

Therefore, for a~general compact matrix quantum group $G=(A,u)$, the algebra~$A$ should be seen as the algebra of non-commutative functions defined on some non-commutative compact underlying space. For this reason, we often denote $A=O(G)$ even if $A$ is not commutative. We can actually define the structure of a~Hopf $*$-algebra on $A$. The matrix $u$ is called the \emph{fundamental representation} of~$G$. Let us note that compact matrix quantum groups are special cases of compact quantum groups, see \cite{NT13,Tim08} for details.

A compact matrix quantum group $H=(O(H),v)$ is a~\emph{quantum subgroup} of $G=(O(G),u)$, denoted as $H\subseteq G$, if $u$ and $v$ have the same size and there is a~surjective $*$-homomorphism $\phi\colon O(G)\to O(H)$ sending $u_{ij}\mapsto v_{ij}$. We say that $G$ and $H$ are \emph{identical} if there exists such a $*$-isomorphism (i.e. if $G\subset H$ and $H\subset G$). If there is a~$*$-isomorphism mapping $u_{ij}\mapsto [TvT^{-1}]_{ij}$ for some invertible matrix $T\in\GL_N$, we say that $G$ is similar to $H$ and write $G=THT^{-1}$.

One of the most important examples is the quantum generalization of the orthogonal group -- the \emph{free orthogonal quantum group} defined by Wang in \cite{Wan95free} through the universal $*$-algebra
$$O(O_N^+):=\staralg(u_{ij},\;i,j=1,\dots,N\mid u_{ij}=u^*_{ij},uu^t=u^tu=1_N).$$

In this article, we are going to study only \emph{orthogonal} quantum groups $G\subset O_N^+$. In other words, we will always assume that the fundamental representation $u$ of $G$ is orthogonal (satisfies $u_{ij}=u_{ij}^*$ and $uu^t=u^tu=1$).

For a~compact matrix quantum group $G=(O(G),u)$, we say that $v\in M_n(O(G))$ is a~representation of $G$ if $\Delta(v_{ij})=\sum_{k}v_{ik}\otimes v_{kj}$, where $\Delta$ is the comultiplication. The representation $v$ is called \emph{unitary} if it is unitary as a~matrix, i.e. $\sum_k v_{ik}v_{jk}^*=\sum_k v_{ki}^*v_{kj}=\delta_{ij}$. In particular, an element $a\in O(G)$ is a one-dimensional representation if $\Delta(a)=a\otimes a$. Another example of a quantum group repersentation is the fundamental representation $u$.

For two representations $v\in M_n(O(G))$, $w\in M_m(O(G))$ of $G$ we define the space of \emph{intertwineres}
$$\Mor(v,w)=\{T\colon \C^n\to\C^m\mid Tv=wT\}.$$

\subsection{Deformed commutativity}
\label{secc.qcom}

Although the goal of this article is to construct some \emph{free} quantum group, that is, to \emph{liberate} the commutativity relations, we also need to say something about \emph{deformations} of commutativity -- namely the $q$-commutativity at $q=-1$. Here, the situation is much closer to the commutative setting than in the free case. For example, we have the quantum determinant here. (See \cite[Section 9.2]{KS97} for more details.)

We say that a~matrix $u$ has \emph{$(-1)$-commutative} entries if the following relations hold
$$u_{ik}u_{jk}=-u_{jk}u_{ik},\quad u_{ki}u_{kj}=-u_{kj}u_{ki},\quad u_{ik}u_{jl}=u_{jl}u_{ik}$$
assuming $i\neq j$ and $k\neq l$.

As an example, let us mention the $q=-1$ deformation of the orthogonal group denoted by $O_N^{-1}$, which is defined by
$$O(O_N^{-1})=\staralg(u_{ij}\mid\text{$u=\bar u$, $u$ orthogonal, $u$ $(-1)$-commutative}).$$

For $q$-commutative matrices, we define the \emph{$q$-determinant}. The definition for $q=-1$ is as follows
\begin{align*}
\det_{-1}u&=\sum_{\sigma\in S_N}u_{1\sigma(1)}\cdots u_{N\sigma(N)}=\sum_{\sigma\in S_N}u_{\pi(1)\sigma(1)}\cdots u_{\pi(N)\sigma(N)}\\
&=\sum_{\pi\in S_N}u_{\pi(1)1}\cdots u_{\pi(N)N}=\sum_{\pi\in S_N}u_{\pi(1)\sigma(1)}\cdots u_{\pi(N)\sigma(N)}
\end{align*}
Since we assume the $(-1)$-commutativity, all the factors of the terms always mutually commute. From this, the different ways to write down the determinant follow. The definition is very similar to the one of classical determinant --  we are only missing the sign of the permutation in the sum. Such an object is sometimes considered also in the classical theory of matrices, where it is called the \emph{permanent}.

\begin{prop}
\label{P.detrep}
Let $G=(O(G),u)$ be a~compact matrix quantum group such that $u$ has $(-1)$-commutative entries. Then $\det_{-1}u\in O(G)$ forms a~one-dimensional representation of $G$.
\end{prop}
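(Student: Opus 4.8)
The plan is to verify directly that $\det_{-1}u$ is group-like, i.e.\ that $\Delta(\det_{-1}u)=\det_{-1}u\otimes\det_{-1}u$; by the characterization recalled above (an element $a$ is a one-dimensional representation iff $\Delta(a)=a\otimes a$), this is exactly the assertion. To expand the left-hand side, I would use that $\Delta$ is a $*$-homomorphism together with $\Delta(u_{ij})=\sum_k u_{ik}\otimes u_{kj}$, apply $\Delta$ to each factor of $u_{1\sigma(1)}\cdots u_{N\sigma(N)}$, and collect the ``middle'' indices into a function $k$. This gives
\[
\Delta(\det_{-1}u)=\sum_{k}(u_{1k_1}\cdots u_{Nk_N})\otimes D(k),
\]
where the sum runs over all functions $k\colon\{1,\dots,N\}\to\{1,\dots,N\}$ (with $k_i:=k(i)$) and $D(k):=\sum_{\sigma\in S_N}u_{k_1\sigma(1)}\cdots u_{k_N\sigma(N)}$. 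Note that the first tensor leg no longer depends on $\sigma$, so the entire $\sigma$-sum is absorbed into $D(k)$.

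The heart of the argument is the claim that $D(k)=0$ whenever $k$ is not a bijection, i.e.\ whenever two of the row indices coincide. I would prove this by analysing how $D$ behaves under swapping two adjacent rows. Reindexing the $\sigma$-sum by $\sigma\mapsto\sigma\circ(i\ i{+}1)$ shows that $D(\dots,r_i,r_{i+1},\dots)$ and $D(\dots,r_{i+1},r_i,\dots)$ agree term by term except for the order of the two factors $u_{r_i\sigma(i)}$ and $u_{r_{i+1}\sigma(i+1)}$, whose columns $\sigma(i)\neq\sigma(i+1)$ are always distinct. If $r_i\neq r_{i+1}$, the third $(-1)$-commutativity relation makes these two factors commute, so $D$ is unchanged; if $r_i=r_{i+1}$, the second relation makes them anticommute, so $D$ changes sign. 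In the latter case the swap leaves the tuple untouched, forcing $2D=0$ and hence $D=0$. To handle a non-adjacent coinciding pair, I would choose a pair of equal rows at minimal distance; then every row strictly between them is distinct from the repeated value, so a sequence of (sign-preserving) swaps of distinct adjacent rows brings the two equal rows next to each other without altering $D$, and the adjacent case applies. This vanishing is precisely where $(-1)$-commutativity is indispensable: for the ordinary permanent, equal rows do \emph{not} annihilate the sum, which is exactly why $\det_{-1}$ behaves like a determinant rather than a permanent here. I expect this lemma to be the main obstacle.

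Finally, for a bijective $k=p$ the remaining factor $D(p)=\sum_{\sigma}u_{p(1)\sigma(1)}\cdots u_{p(N)\sigma(N)}$ is just $\det_{-1}u$ with its rows permuted by $p$, which equals $\det_{-1}u$ by the alternative expressions for the determinant recorded above (justified there by the mutual commutativity of the factors within each term). Hence only bijective $k$ survive, and
\[
\Delta(\det_{-1}u)=\sum_{p\in S_N}(u_{1p(1)}\cdots u_{Np(N)})\otimes\det_{-1}u=\Big(\sum_{p\in S_N}u_{1p(1)}\cdots u_{Np(N)}\Big)\otimes\det_{-1}u=\det_{-1}u\otimes\det_{-1}u,
\]
as desired. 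Everything beyond the vanishing lemma is the routine Cauchy--Binet-type bookkeeping; once the adjacent-swap dichotomy (commute versus anticommute according as the two rows differ or agree) is isolated, the conclusion follows.
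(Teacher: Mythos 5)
Your proposal is correct and follows essentially the same route as the paper: expand $\Delta(\det_{-1}u)$, show that tuples $(k_1,\dots,k_N)$ with a repeated entry contribute zero via the anticommutativity of same-row entries, and identify the surviving bijective terms using the alternative expressions for $\det_{-1}$. The only difference is cosmetic: the paper cancels terms by pairing $\sigma$ with $\sigma\circ\tau$ for a transposition $\tau$ exchanging the two equal positions, whereas you derive $D(k)=-D(k)$ from adjacent row swaps; your version spells out the reduction of non-adjacent coincidences to the adjacent case, a detail the paper leaves implicit.
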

\begin{proof}
Indeed,
\begin{align*}
&\Delta(\det_{-1}u)=\sum_{\sigma\in S_N}\sum_{k_1,\dots,k_N=1}^N(u_{1k_1}\cdots u_{Nk_N})\otimes(u_{k_1\sigma(1)}\dots u_{k_N\sigma(N)})\\&=\sum_{\sigma\in S_N}\sum_{\pi\in S_N}(u_{1\pi(1)}\cdots u_{N\pi(N)})\otimes(u_{\pi(1)\sigma(1)}\dots u_{\pi(N)\sigma(N)})=\det_{-1}u\otimes\det_{-1}u,
\end{align*}
where the second equality follows from the fact that only tuples $(k_1,\dots,k_N)$ that are permutations contribute to the sum since if $k_i=k_j$ then $u_{k_1\sigma(1)}\cdots u_{k_N\sigma(N)}=-u_{k_1(\sigma\circ\tau)(1)}\cdots u_{k_N(\sigma\circ\tau)(N)}$, where $\tau$ is the transposition of $i$ and $j$.
\end{proof}

\subsection{Monoidal involutive categories and Tannaka--Krein duality}
All categories appearing in this article will be rigid monoidal involutive categories with the set of natural numbers including zero $\N_0$ as the set of self-dual objects. That is, by a~\emph{category} $\Cat$ we mean a~collection of sets $\Cat(k,l)$ with $k,l\in\N_0$ together with a~colection of operations
\begin{itemize}
\item tensor product $\otimes\colon \Cat(k_1,l_1)\times\Cat(k_2,l_2)\to\Cat(k_1+k_2,l_1+l_2)$,
\item composition $\cdot\colon \Cat(l,m)\times\Cat(k,l)\to\Cat(k,m)$,
\item involution $*\colon \Cat(k,l)\to\Cat(l,k)$
\end{itemize}
such that
\begin{itemize}
\item $\otimes$ and $\cdot$ are associative, $*$ is involutive
\item we have $(R\otimes T)(S\otimes U)=(RS\otimes TU)$ for every $R\in\Cat(l_1,m_1)$, $S\in\Cat(k_1,l_1)$, $T\in\Cat(l_2,m_2)$, $U\in\Cat(k_2,l_2)$,
\item there is the \emph{scalar identity} $\id_0\in\Cat(0,0)$ such that $T\otimes \id_0=\id_0\otimes T=T$ for every $T\in\Cat(k,l)$,
\item there is the \emph{identity} $\id\in\Cat(1,1)$ such that $T\cdot\id_k=\id_l\cdot T=T$ for every $T\in\Cat(k,l)$, where $\id_k:=\id^{\otimes k}$ for $k\ge 1$
\item there is the \emph{duality morphism} $T_{\pairpart}\in\Cat(0,2)$ such that
$$(\id\otimes T_{\pairpart}^*)(T_{\pairpart}\otimes\id)=\id\quad\text{and}\quad(T_{\pairpart}^*\otimes\id)(\id\otimes T_{\pairpart})=\id.$$
\end{itemize}
By a~{\em linear category} we mean a~category in the above sense such that the sets $\Cat(k,l)$ are vector spaces, the operations $\otimes$ and $\cdot$ are bilinear and the involution $*$ is antilinear.

The role of homomorphisms between two categories is played by monoidal unitary functors. A~\emph{monoidal unitary functor} $F\colon\Cat\to\Dat$ is a~collection of maps $F\colon\Cat(k,l)\to\Dat(k,l)$ such that $F(ST)=F(S)F(T)$ and $F(T^*)=F(T)^*$ for every $T\in\Cat(k,l)$ and $S\in\Cat(l,m)$. We call $F$ a~category \emph{isomorphism} if all the maps are bijections.

\begin{rem}[Frobenius reciprocity]
The duality morphism defines a~linear isomorphism between the spaces $\Cat(k,l)$ with fixed $k+l$. For a~morphism $T\in\Cat(k,l)$, we define its \emph{right rotation} and \emph{left rotation} as
\begin{align*}
\Rrot T&:=(\id_{l-1}\otimes T_{\pairpart}^*)(T\otimes\id)\in\Cat(k+1,l-1),\\
\Lrot T&:=(\id\otimes T)(T_{\pairpart}\otimes\id_{k-1})\in\Cat(k-1,l+1).
\end{align*}
In particular, any category is determined by the spaces $\Cat(0,l)$ since then we can compute $\Cat(k,l)=\Rrot^k\Cat(0,k+l)$.
\end{rem}

Let $G=(O(G),u)$ be a~compact matrix quantum group. We can associate to $G$ the linear category $\Cat_G$ with
$$\Cat_G(k,l):=\Mor(u^{\otimes k},u^{\otimes l})=\{T\colon(\C^N)^{\otimes k}\to(\C^N)^{\otimes l}\mid Tu^{\otimes k}=u^{\otimes l}T\}.$$

Another examples of monoidal categories will be presented in sections \ref{secc.partcat}, \ref{secc.graphcat} with the aim to model somehow such representation categories. Now the key ingredient for our work is the so-called Tannaka--Krein duality formulated by Woronowicz in \cite{Wor88}. We give here a simplified version of this statement (see also \cite{NT13,Mal18,Fre19survey}).

\begin{thm}[Tannaka--Krein duality]
\label{T.Tannaka}
Let $\Cat$ be a~linear category with $\Cat(k,l)\subset\Lin((\C^N)^{\otimes k},(\C^N)^{\otimes l})$ for some $N\in\N$. Suppose that the duality morphism $T_{\pairpart}\colon\C\to(\C^N)^{\otimes 2}$ is of the form $1\mapsto \sum_{i=1}^Ne_i\otimes e_i$. Then there exists a~compact matrix quantum group $G=(O(G),u)\subset O^+_N$ such that $\Cat=\Cat_G$.
\end{thm}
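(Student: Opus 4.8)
The plan is to construct $O(G)$ by universality and then show that the category it produces is exactly $\Cat$. First I would define $O(G)$ as the universal $*$-algebra generated by self-adjoint elements $u_{ij}$, $i,j=1,\dots,N$, modulo the relations
$$T\,u^{\otimes k}=u^{\otimes l}\,T\qquad\text{for every }k,l\in\N_0\text{ and every }T\in\Cat(k,l),$$
where $u^{\otimes k}\in M_{N^k}(O(G))$ is the matrix with entries $(u^{\otimes k})_{\mathbf i,\mathbf j}=u_{i_1j_1}\cdots u_{i_kj_k}$ and $T\in\Cat(k,l)\subset\Lin((\C^N)^{\otimes k},(\C^N)^{\otimes l})$ is read as the corresponding scalar matrix. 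Applying this relation to the two morphisms $T_{\pairpart}\in\Cat(0,2)$ and $T_{\pairpart}^*\in\Cat(2,0)$, whose standard real form is part of the hypothesis, yields exactly $uu^t=u^tu=1_N$; together with self-adjointness of the generators this says that $u$ is orthogonal, so $G\subset O_N^+$, and the duality relations make $\bar u=u$ compatible with the $*$-structure.

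Next I would equip $O(G)$ with a Hopf $*$-algebra structure via $\Delta(u_{ij})=\sum_k u_{ik}\otimes u_{kj}$, $\epsilon(u_{ij})=\delta_{ij}$ and $S(u_{ij})=u_{ji}$. The only nontrivial point is that $\Delta$ descends to the quotient. Setting $v_{ij}=\sum_k u_{ik}\otimes u_{kj}\in O(G)\otimes O(G)$, the matrix $v^{\otimes k}$ factors as the product of the two commuting legs $u^{\otimes k}\otimes 1$ and $1\otimes u^{\otimes k}$, each of which is intertwined by $T$; hence $T\,v^{\otimes k}=v^{\otimes l}\,T$ whenever $u$ satisfies the relations, so $\Delta$ respects all defining relations. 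The compact matrix quantum group axioms then follow: the $u_{ij}$ generate by construction, orthogonality makes $u$ and $u^t$ unitary, and $\Delta$ is a $*$-homomorphism. This gives $G=(O(G),u)\subset O_N^+$, and by the very definition of the relations every $T\in\Cat(k,l)$ lies in $\Mor(u^{\otimes k},u^{\otimes l})=\Cat_G(k,l)$; that is, the easy inclusion $\Cat\subset\Cat_G$ holds.

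The substantial part, and the step I expect to be the main obstacle, is the reverse inclusion $\Cat_G\subset\Cat$: imposing only the relations coming from $\Cat$ must not create any extra intertwiners. Using the rotation isomorphisms of the preceding remark, which are available in both categories because both contain $T_{\pairpart}$, it suffices to prove $\Cat_G(0,l)=\Cat(0,l)$ for all $l$, i.e.\ that the space of invariant vectors of $u^{\otimes l}$ is exactly $\Cat(0,l)$. For this I would pass to the analytic setting: complete $O(G)$ to its universal $C^*$-algebra and invoke Woronowicz's theorem that an orthogonal compact matrix quantum group carries a Haar state $h$. The averaging operator $P:=(\id\otimes h)(u^{\otimes l})\in\Lin((\C^N)^{\otimes l})$ is then the orthogonal projection onto $\Cat_G(0,l)$, and the crux is to show $P\in\Cat$. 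This is where one must argue that the Peter--Weyl decomposition of $u^{\otimes l}$ is governed entirely by the combinatorics of $\Cat$, so that no invariant vector outside $\Cat(0,l)$ can appear; concretely I would follow the reconstruction argument of \cite{Wor88} (or its modern algebraic repackaging), exhibiting a spanning set of $O(G)$ by matrix coefficients organized along $\Cat$ and performing a dimension count yielding $\dim\Cat_G(k,l)\le\dim\Cat(k,l)$, which combined with $\Cat\subset\Cat_G$ forces equality. The delicate points are the existence of the Haar state in this framework and controlling the projections $P$ without circularity; everything else reduces to routine bookkeeping with the tensor, composition, and rotation operations.
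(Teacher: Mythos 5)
The paper does not prove this theorem at all: it is stated as a ``simplified version'' of Woronowicz's Tannaka--Krein duality and is justified purely by citation to \cite{Wor88} and the expository references \cite{NT13,Mal18,Fre19survey}. So there is no in-paper argument to compare against; what I can assess is whether your outline is the standard one, and it is. The construction of $O(G)$ as the universal $*$-algebra on self-adjoint generators subject to $Tu^{\otimes k}=u^{\otimes l}T$ for $T\in\Cat(k,l)$, the derivation of $uu^t=u^tu=1_N$ from $T_{\pairpart}$ and $T_{\pairpart}^*$, the descent of $\Delta$ via the factorization $v^{\otimes k}=(u^{\otimes k}\otimes 1)(1\otimes u^{\otimes k})$, and the resulting inclusion $\Cat\subset\Cat_G$ are all correct and routine. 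You also correctly identify that the entire substance of the theorem is the reverse inclusion $\Cat_G\subset\Cat$, reduce it via Frobenius reciprocity to the spaces $\Cat(0,l)$, and point at the Haar-state projection $P=(\id\otimes h)(u^{\otimes l})$ as the object to control --- but at that point you defer to the reconstruction argument of \cite{Wor88} rather than carrying it out, so your proposal is not self-contained exactly where the paper also is not. That is an acceptable resolution here (the paper treats the result as known), but be aware that the dimension bound $\dim\Cat_G(k,l)\le\dim\Cat(k,l)$ is the whole theorem, and that the cleanest modern treatment (Malacarne \cite{Mal18}) establishes it purely algebraically, without invoking the Haar state, which sidesteps the circularity worry you raise about existence of $h$ on the universal completion.
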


Finally, let us mention a well known correspondence between quantum subgroups and subcategories.

\begin{prop}
\label{P.subgrp}
Consider $G,H\subset O_N^+$. Then $H\subset G$ if and only if $\Cat_H(k,l)\supset\Cat_G(k,l)$ for every $k,l\in\N_0$.
\end{prop}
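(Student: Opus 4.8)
The plan is to prove the two implications separately. The forward implication ($H\subset G\Rightarrow\Cat_G\subset\Cat_H$) is a routine entrywise computation, while the backward one rests on the universal algebraic description of $O(G)$ that underlies Tannaka--Krein duality; the latter is where the real content lies.

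For the forward direction, suppose $H\subset G$, so that by definition there is a surjective $*$-homomorphism $\phi\colon O(G)\to O(H)$ with $\phi(u_{ij})=v_{ij}$. Fixing $T\in\Cat_G(k,l)=\Mor(u^{\otimes k},u^{\otimes l})$, I want to show $T\in\Cat_H(k,l)$. The intertwining relation $Tu^{\otimes k}=u^{\otimes l}T$ is an identity between two matrices with entries in $O(G)$, and I would simply apply $\phi$ to it entrywise. Since $\phi$ is a unital algebra homomorphism, it fixes the scalar entries of $T$ and sends each entry $(u^{\otimes k})_{\mathbf{i}\mathbf{j}}=u_{i_1 j_1}\cdots u_{i_k j_k}$ to the corresponding product $v_{i_1 j_1}\cdots v_{i_k j_k}=(v^{\otimes k})_{\mathbf{i}\mathbf{j}}$; hence the relation is transformed into $Tv^{\otimes k}=v^{\otimes l}T$, which is exactly $T\in\Mor(v^{\otimes k},v^{\otimes l})=\Cat_H(k,l)$. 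This yields $\Cat_G(k,l)\subset\Cat_H(k,l)$ for all $k,l$.

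For the backward direction, I assume $\Cat_H(k,l)\supset\Cat_G(k,l)$ for all $k,l$ and construct the homomorphism witnessing $H\subset G$. Here I would invoke the stronger form of Tannaka--Krein duality behind Theorem~\ref{T.Tannaka}: the quantum group $G$ recovered from $\Cat_G$ is realized through the universal $*$-algebra generated by the entries of an orthogonal matrix $u$ subject precisely to the relations $Tu^{\otimes k}=u^{\otimes l}T$, one for each $T\in\Cat_G(k,l)$. Now, since $H\subset O_N^+$, its fundamental representation $v$ is orthogonal, and since $\Cat_G(k,l)\subset\Cat_H(k,l)=\Mor(v^{\otimes k},v^{\otimes l})$, the entries $v_{ij}$ satisfy every defining relation of this universal algebra. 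The universal property then produces a $*$-homomorphism $\phi\colon O(G)\to O(H)$ with $u_{ij}\mapsto v_{ij}$, automatically surjective because the $v_{ij}$ generate $O(H)$. By definition this means $H\subset G$.

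The main obstacle is the backward direction, and more precisely the appeal to the universal presentation of $O(G)$. Theorem~\ref{T.Tannaka} as stated asserts only the existence of a quantum group with a prescribed intertwiner category, whereas the argument needs the finer fact that $O(G)$ is genuinely the universal algebra whose complete set of defining relations is encoded by $\Cat_G$ --- equivalently, that a compact matrix quantum group is determined by its representation category. This is exactly the content of Woronowicz's Tannaka--Krein theorem in full strength; once it is granted, the remaining verifications (orthogonality of $v$, the relations, and surjectivity) are immediate.
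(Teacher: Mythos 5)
Your proof is correct. Note that the paper itself offers no proof of Proposition~\ref{P.subgrp} --- it is quoted as a well-known fact --- so there is nothing to compare against; your argument is the standard one that the paper implicitly relies on. You also correctly identify the only delicate point: the backward direction needs more than the existence statement of Theorem~\ref{T.Tannaka}, namely the full strength of Woronowicz's Tannaka--Krein theorem, which presents $O(G)$ as the universal $*$-algebra on the entries of an orthogonal self-adjoint matrix subject exactly to the relations $Tu^{\otimes k}=u^{\otimes l}T$ for $T\in\Cat_G(k,l)$; granting that, your verification that the $v_{ij}$ satisfy these relations and that the resulting $*$-homomorphism is surjective is complete.
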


Considering two quantum groups $G_1$ and $G_2$, we may define their \emph{intersection} $G:=G_1\cap G_2$ to be the largest quantum subgroup of both $G_1$ and $G_2$. The associated representation category $\Cat_G$ would then consequently be the smallest category containing both $\Cat_{G_1}$ and $\Cat_{G_2}$. We may say that $\Cat_G$ is \emph{generated} by $\Cat_{G_1}$ and $\Cat_{G_2}$.

\subsection{Partitions}

Let $k,l\in\N_0$, by a~\emph{partition} of $k$ upper and $l$ lower points we mean a~partition of the set $\{1,\dots,k\}\sqcup\{1,\dots,l\}\approx\{1,\dots,k+l\}$, that is, a~decomposition of the set of $k+l$ points into non-empty disjoint subsets, called \emph{blocks}. The first $k$ points are called \emph{upper} and the last $l$ points are called \emph{lower}. The set of all partitions on $k$ upper and $l$ lower points is denoted $\Part(k,l)$. We define the union $\Part:=\bigcup_{k,l\in\N_0}\Part(k,l)$. The number $\left| p\right|:=k+l$ for $p\in\Part(k,l)$ is called the \emph{length} of $p$.

We illustrate partitions graphically by putting $k$ points in one row and $l$ points on another row below and connecting by lines those points that are grouped in one block. All lines are drawn between those two rows.

Below, we give an example of two partitions $p\in \Part(3,4)$ and $q\in\Part(4,4)$ defined by their graphical representation. The first set of points is decomposed into three blocks, whereas the second one is into five blocks. In addition, the first one is an example of a~\emph{non-crossing} partition, i.e.\ a~partition that can be drawn in a~way that lines connecting different blocks do not intersect (following the rule that all lines are between the two rows of points). On the other hand, the second partition has one crossing. 

\begin{equation}
\label{eq.pq}
\vrule height 16bp depth 10bp width 0bp
p=
\BigPartition{
\Pblock 0 to 0.25:2,3
\Pblock 1 to 0.75:1,2,3
\Psingletons 0 to 0.25:1,4
\Pline (2.5,0.25) (2.5,0.75)
}
\qquad
q=
\BigPartition{
\Psingletons 0 to 0.25:1,4
\Psingletons 1 to 0.75:1,4
\Pline (2,0) (3,1)
\Pline (3,0) (2,1)
\Pline (2.75,0.25) (4,0.25)
}
\end{equation}

%In our graphical notation, if two or more strings cross each other, we never assume they are connected. On the other hand, if three strings meet at one point (typically like this \Partition{\Pline (1,0.2)(1,0.9) \Pline(0.5,0.9)(1.5,0.9)}), we of course assume they are connected. Thus, a~partition on two upper and two lower points, where all points are in a~single block, is denoted like this $\connecterpart$, whereas the diagram $\crosspart$ stands for a~partition consisting of two blocks.

A block containing a~single point is called a~\emph{singleton}. In particular, the partitions containing only one point are called singletons and for clarity denoted by an arrow $\singleton\in\Part(0,1)$ and $\upsingleton\in\Part(1,0)$.

\subsection{Linear categories of partitions}
\label{secc.partcat}
Below, we define operations on partitions that give the collection of sets $\Part(k,l)$ the structure of a~(monoidal involutive) category. These operations probably first appeared in \cite{Mar96}. In the context of quantum groups, the partition categories were first considered in \cite{BS09}. Note that in contrast with \cite{BS09}, we consider linear categories of partitions here, that is, we also allow linear combinations of partitions; see also \cite{GWintsp,GWgen}.

Fix a~natural number $N\in\N$. Let us denote $\Part\nlin(k,l)$ the vector space of formal linear combination of partitions $p\in\Part(k,l)$. That is, $\Part\nlin(k,l)$ is a~vector space, whose basis is $\Part(k,l)$. Let us denote $\Part\nlin:=\bigcup_{k,l}\Part\nlin(k,l)$.

Now, we are going to define some operations on $\Part\nlin$. First, let us define those operations just on partitions.
\begin{itemize}
\item  The \emph{tensor product} of two partitions $p\in\Part(k,l)$ and $q\in\Part(k',l')$ is the partition $p\otimes q\in \Part(k+k',l+l')$ obtained by writing the graphical representations of $p$ and $q$ ``side by side''.
$$
\BigPartition{
\Pblock 0 to 0.25:2,3
\Pblock 1 to 0.75:1,2,3
\Psingletons 0 to 0.25:1,4
\Pline (2.5,0.25) (2.5,0.75)
}
\otimes
\BigPartition{
\Psingletons 0 to 0.25:1,4
\Psingletons 1 to 0.75:1,4
\Pline (2,0) (3,1)
\Pline (3,0) (2,1)
\Pline (2.75,0.25) (4,0.25)
}
=
\BigPartition{
\Pblock 0 to 0.25:2,3
\Pblock 1 to 0.75:1,2,3
\Psingletons 0 to 0.25:1,4,5,8
\Psingletons 1 to 0.75:5,8
\Pline (2.5,0.25) (2.5,0.75)
\Pline (6,0) (7,1)
\Pline (7,0) (6,1)
\Pline (6.75,0.25) (8,0.25)
}
$$

\item For $p\in\Part(k,l)$, $q\in\Part(l,m)$ we define their \emph{composition} $qp\in\Part\nlin(k,m)$ by putting the graphical representation of $q$ below $p$ identifying the lower row of $p$ with the upper row of $q$. The upper row of $p$ now represents the upper row of the composition and the lower row of $q$ represents the lower row of the composition. Each extra connected component of the diagram that appears in the middle and is not connected to any of the upper or the lower points, transforms to a~multiplicative factor $N$.% The number of such \emph{removed loops} will be denoted $\rl(p,q)$, so the total factor is $N^{\rl(p,q)}$.
$$
\BigPartition{
\Psingletons 0 to 0.25:1,4
\Psingletons 1 to 0.75:1,4
\Pline (2,0) (3,1)
\Pline (3,0) (2,1)
\Pline (2.75,0.25) (4,0.25)
}
\cdot
\BigPartition{
\Pblock 0 to 0.25:2,3
\Pblock 1 to 0.75:1,2,3
\Psingletons 0 to 0.25:1,4
\Pline (2.5,0.25) (2.5,0.75)
}
=
\BigPartition{
\Pblock 0.5 to 0.75:2,3
\Pblock 1.5 to 1.25:1,2,3
\Psingletons  0.5 to  0.75:1,4
\Pline (2.5,0.75) (2.5,1.25)
\Psingletons -0.5 to -0.25:1,4
\Psingletons  0.5 to  0.25:1,4
\Pline (2,-0.5) (3,0.5)
\Pline (3,-0.5) (2,0.5)
\Pline (2.75,-0.25) (4,-0.25)
}
= N^2
\BigPartition{
\Pblock 0 to 0.25:2,3,4
\Pblock 1 to 0.75:1,2,3
\Psingletons 0 to 0.25:1
\Pline (2.5,0.25) (2.5,0.75)
}
$$

\item For $p\in\Part(k,l)$ we define its \emph{involution} $p^*\in\Part(l,k)$ by reversing its graphical representation with respect to the horizontal axis.
$$
\left(
\BigPartition{
\Pblock 0 to 0.25:2,3
\Pblock 1 to 0.75:1,2,3
\Psingletons 0 to 0.25:1,4
\Pline (2.5,0.25) (2.5,0.75)
}
\right)^*
=
\BigPartition{
\Pblock 1 to 0.75:2,3
\Pblock 0 to 0.25:1,2,3
\Psingletons 1 to 0.75:1,4
\Pline (2.5,0.25) (2.5,0.75)
}
$$
\end{itemize}

Now we can extend the definition of tensor product and composition linearly. We extend the definition of the involution antilinearly. Then the operations define the structure of a~linear category on the collection $\Part\nlin(k,l)$.

Any collection of subspaces $\Kat=\bigcup_{k,l\in\N_0}\Kat(k,l)$, $\Kat(k,l)\subset\Part\nlin(k,l)$ containing the \emph{identity partition} $\idpart\in\Kat(1,1)$ and the \emph{pair partition} $\pairpart\in\Kat(0,2)$ (playing the role of the duality morphism) and closed under the category operations is again a~linear category called a~\emph{linear category of partitions}.

Note that for any linear category of partitions, the space $\Cat(0,0)$ is one-dimensional, formed by scalar multiples of the \emph{empty partition} playing the role of the scalar identity and hence can be identified with $\C$.

For given $p_1,\dots,p_n\in\Part\nlin$, we denote by $\langle p_1,\dots,p_n\rangle\nlin$ the smallest linear category of partitions containing $p_1,\dots,p_n$. We say that $p_1,\dots,p_n$ \emph{generate} $\langle p_1,\dots,p_n\rangle\nlin$. Note that the pair partitions are contained in the category by definition and hence will not be explicitly listed as generators.

For any partition, its right and left rotations are given by simply rotating the picture. That is, right rotation is given by taking the rightmost point of the lower row and putting it to the top row. For the left rotation, take the leftmost point of the upper row and put it to the bottom row. For example,
$$
\Rrot\left(
\BigPartition{
\Pblock 0 to 0.25:2,3
\Pblock 1 to 0.75:1,2,3
\Psingletons 0 to 0.25:1,4
\Pline (2.5,0.25) (2.5,0.75)
}\right)=
\BigPartition{
\Pblock 0 to 0.25:2,3
\Pblock 1 to 0.75:1,2,3
\Psingletons 1 to 0.75:4
\Psingletons 0 to 0.25:1
\Pline (2.5,0.25) (2.5,0.75)
},\qquad
\Lrot\left(
\BigPartition{
\Pblock 0 to 0.25:2,3
\Pblock 1 to 0.75:1,2,3
\Psingletons 0 to 0.25:1,4
\Pline (2.5,0.25) (2.5,0.75)
}\right)=
\BigPartition{
\Pblock 0 to 0.35:0,2,3
\Pblock 1 to 0.75:2,3
\Psingletons 0 to 0.2:1,4
\Pline (2.5,0.35) (2.5,0.75)
}.
$$

\begin{rem}
As we already mentioned, Banica and Speicher in \cite{BS09} and many others following their work do not use the linear structure for partitions. Any category of partitions $\Cat$ in the Banica--Speicher sense defines a linear category of partitions $\Kat$ by taking $\Kat(k,l):=\spanlin\Cat(k,l)$. Such categories are much easier to study by combinatorial means and therefore we call them \emph{easy}. Many examples of categories that are not of this form (and hence called \emph{non-easy}) were recently discovered in \cite{GWintsp,GWgen}.
\end{rem}

\subsection{Linear maps associated to partitions} In this section, we review the connection between partitions and quantum groups \cite{BS09}. More concretely, we show how partition categories can be used to model representation categories of quantum groups.

Consider again a~fixed natural number $N\in\N$. Given a~partition $p\in\Part(k,l)$, we can define a~linear map $T_p\colon(\C^N)^{\otimes k}\to(\C^N)^{\otimes l}$ via
\begin{equation}
T_p(e_{i_1}\otimes\cdots\otimes e_{i_k})=\sum_{j_1,\dots,j_l=1}^N\delta_p(\mathbf{i},\mathbf{j})(e_{j_1}\otimes\cdots\otimes e_{j_l}),
\end{equation}
where $\mathbf{i}=(i_1,\dots,i_k)$, $\mathbf{j}=(j_1,\dots,j_l)$ and the symbol $\delta_p(\mathbf{i},\mathbf{j})$ is defined as follows. Let us assign the $k$ points in the upper row of $p$ by numbers $i_1,\dots,i_k$ (from left to right) and the $l$ points in the lower row $j_1,\dots,j_l$ (again from left to right). Then $\delta(\mathbf{i},\mathbf{j})=1$ if the points belonging to the same block are assigned the same numbers. Otherwise $\delta(\mathbf{i},\mathbf{j})=0$. 

As an example, we can express $\delta_p$ and $\delta_q$, where $p$ and $q$ come from Equation \eqref{eq.pq}, using multivariate $\delta$ function as follows
$$\delta_p(\mathbf{i},\mathbf{j})=\delta_{i_1i_2i_3j_2j_3},\quad
\delta_q(\mathbf{i},\mathbf{j})=\delta_{i_2j_3j_4}\delta_{i_3j_2}.$$

We extend this definition for linear combinations of partitions linearly, that is, $\delta_{\alpha p+q}=\alpha\delta_p+\delta_q$ and hence $T_{\alpha p+q}=\alpha T_p+T_q$.

Given a~linear combination of partitions $p\in\Part\nlin(k,l)$, we can interpret the map $T_p$ as an intertwiner $T_pu^{\otimes k}=u^{\otimes l}T_p$ for some compact matrix quantum group~$G$. Substituting the definition of $T_p$, this implies the following relations
$$\sum_{t_1,\dots,t_k=1}^N\delta_p(\mathbf{t},\mathbf{s})u_{t_1i_1}\cdots u_{t_ki_k}=\sum_{j_1,\dots,j_l=1}^N\delta_p(\mathbf{i},\mathbf{j})u_{s_1j_1}\cdots u_{s_lj_l}$$
for every $i_1,\dots,i_k,s_1,\dots,s_l\in\{1,\dots,N\}$.

For example, considering $p=\pairpart\in\Part(0,2)$, we have the relation $$\delta_{s_1s_2}=\sum_{j=1}^Nu_{s_1j}u_{s_2j}.$$ Thus, for any quantum group $G\subset O_N^+$, we have that $T_{\pairpart}\in\Mor(1,u\otimes u)$. Similarly, we also have $T_{\uppairpart}\in\Mor(u\otimes u,1)$ for any $G\subset O_N^+$.

\begin{prop}
The map $T_\bullet\colon p\mapsto T_p$ is a~monoidal unitary functor. That is, we have the following
\begin{enumerate}
\item $T_{p\otimes q}=T_p\otimes T_q$,
\item $T_{qp}=T_qT_p$ whenever one of the sides makes sense,
\item $T_{p^*}=T_p^*$.
\end{enumerate}
\end{prop}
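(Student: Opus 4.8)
The plan is to verify the three functoriality properties directly from the definition of $T_p$ via the scalar coefficients $\delta_p$. Since $T_\bullet$ is already extended linearly in $p$, it suffices to check each identity on plain partitions, after which linearity takes care of general linear combinations. The whole argument reduces to combinatorial bookkeeping of how the defining relation ``points in the same block carry the same index'' interacts with the three graphical operations.

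First I would treat the tensor product. Writing $p\in\Part(k_1,l_1)$ and $q\in\Part(k_2,l_2)$, the partition $p\otimes q$ places the two diagrams side by side, so a block of $p\otimes q$ is either a block of $p$ (on the first $k_1$ upper and $l_1$ lower points) or a block of $q$ (on the remaining points); there are no blocks mixing the two. Hence for multi-indices $\mathbf i=(\mathbf i',\mathbf i'')$ and $\mathbf j=(\mathbf j',\mathbf j'')$ split accordingly, we have the factorization $\delta_{p\otimes q}(\mathbf i,\mathbf j)=\delta_p(\mathbf i',\mathbf j')\,\delta_q(\mathbf i'',\mathbf j'')$. Feeding this into the definition of $T_{p\otimes q}$ on a basis vector $e_{\mathbf i'}\otimes e_{\mathbf i''}$ and comparing with $(T_p\otimes T_q)(e_{\mathbf i'}\otimes e_{\mathbf i''})$ gives property~(1) immediately.

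Next I would handle the involution. Since $p^*$ is obtained by reflecting $p$ across the horizontal axis, its upper row is the old lower row and vice versa, so a block of $p^*$ is the same subset of points as the corresponding block of $p$ with the roles of upper and lower swapped. This means $\delta_{p^*}(\mathbf j,\mathbf i)=\delta_p(\mathbf i,\mathbf j)$. Because the $\delta$'s are real (indeed $0$ or $1$), this is exactly the statement that the matrix of $T_{p^*}$ is the transpose (equivalently, the adjoint) of the matrix of $T_p$ with respect to the standard orthonormal basis, i.e.\ property~(3); one must also check consistency with the antilinear extension of $*$, which is automatic since the coefficients are real.

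The main obstacle is property~(2), the compatibility with composition, precisely because of the loop factors. For $p\in\Part(k,l)$ and $q\in\Part(l,m)$ one computes $T_qT_p$ on $e_{\mathbf i}$ by first summing over the intermediate indices $\mathbf r\in\{1,\dots,N\}^l$ labelling the identified middle row, giving a coefficient $\sum_{\mathbf r}\delta_p(\mathbf i,\mathbf r)\,\delta_q(\mathbf r,\mathbf j)$. The key combinatorial claim is that this sum equals $N^{c}\,\delta_{qp}(\mathbf i,\mathbf j)$, where $c$ is the number of closed blocks (connected components in the middle not touching the outer rows) that arise when stacking $q$ below $p$ — exactly the $N$-power appearing in the categorical composition $qp$. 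To prove this I would argue that a consistent labelling of the middle row by $\mathbf r$ forces equal indices within each block of the glued diagram; the blocks that reach the top or bottom constrain $\mathbf r$ via $\mathbf i$ and $\mathbf j$ and contribute the factor $\delta_{qp}(\mathbf i,\mathbf j)$, while each free middle block may be labelled arbitrarily by any of $N$ values, yielding one factor of $N$ per such component. Care is needed to confirm that whenever the resulting forced identification is inconsistent (so $\delta_{qp}=0$) the $\mathbf r$-sum indeed vanishes, and that the bijection between admissible $\mathbf r$-labellings and the pairs (admissible outer labelling, free choices on closed blocks) is exact; this matching of the loop count with the power of $N$ is the heart of the verification.
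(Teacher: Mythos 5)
Your verification is correct. The paper gives no proof of this proposition at all---it is quoted as a standard fact from \cite{BS09}---and your direct check (the factorization $\delta_{p\otimes q}(\mathbf i,\mathbf j)=\delta_p(\mathbf i',\mathbf j')\delta_q(\mathbf i'',\mathbf j'')$, the symmetry $\delta_{p^*}(\mathbf j,\mathbf i)=\delta_p(\mathbf i,\mathbf j)$ together with realness of the coefficients, and the counting identity $\sum_{\mathbf r}\delta_p(\mathbf i,\mathbf r)\delta_q(\mathbf r,\mathbf j)=N^{c}\,\delta_{q\cdot p}(\mathbf i,\mathbf j)$ matching the free middle components against the loop factor built into the categorical composition) is exactly the standard argument that the citation refers to, with the composition case correctly singled out as the only step requiring real work.
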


\begin{cor}
For every linear category of partitions $\Kat$ there exists a~compact matrix quantum group $G=(O(G),u)\subset O_N^+$ such that
$$\Cat_G(k,l)=\Mor(u^{\otimes k},u^{\otimes l})=\{T_p\mid p\in\Kat(k,l)\}.$$
\end{cor}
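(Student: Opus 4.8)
The plan is to deduce this corollary directly from Tannaka--Krein duality (Theorem~\ref{T.Tannaka}) combined with the functoriality of $T_\bullet$ just established in the preceding proposition. Given a linear category of partitions $\Kat$, I would define $\Cat(k,l):=\{T_p\mid p\in\Kat(k,l)\}=T_\bullet(\Kat(k,l))$, the image of each space under the functor. The strategy is to verify that this collection $\Cat$ satisfies all hypotheses of Theorem~\ref{T.Tannaka}, whence the desired quantum group $G$ with $\Cat_G=\Cat$ exists, and then to read off the stated formula.

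First I would check that $\Cat$ is a linear category in the sense defined earlier, i.e.\ that it is closed under $\otimes$, $\cdot$, and $*$, and contains the required $\idpart$, $\pairpart$, and empty partition images. This is where functoriality does all the work: since $T_{p\otimes q}=T_p\otimes T_q$, $T_{qp}=T_qT_p$, and $T_{p^*}=T_p^*$, the image of a category closed under the partition operations is automatically closed under the corresponding linear-map operations. The identity morphism $\id$ arises as $T_{\idpart}$, and the scalar identity as the image of the empty partition; since $\Kat$ is a linear category of partitions it contains these by definition, so $\Cat$ contains their images. Likewise the duality morphism is $T_{\pairpart}$, which $\Kat$ contains by the definition of a linear category of partitions.

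Next I would verify the crucial normalization hypothesis of Theorem~\ref{T.Tannaka}: that the duality morphism has the form $1\mapsto\sum_{i=1}^N e_i\otimes e_i$. This follows immediately from the explicit formula for $T_p$ applied to $p=\pairpart\in\Part(0,2)$, since $\delta_{\pairpart}(\emptyset,(j_1,j_2))=\delta_{j_1 j_2}$ gives exactly $T_{\pairpart}(1)=\sum_{i=1}^N e_i\otimes e_i$. With all hypotheses met, Theorem~\ref{T.Tannaka} yields a compact matrix quantum group $G\subset O_N^+$ with $\Cat_G=\Cat$, which is precisely the assertion $\Mor(u^{\otimes k},u^{\otimes l})=\{T_p\mid p\in\Kat(k,l)\}$.

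The only genuinely delicate point, and the step I would expect to need the most care, is confirming that $\Cat(k,l)$ is actually a linear \emph{subspace} of $\Lin((\C^N)^{\otimes k},(\C^N)^{\otimes l})$ rather than merely a spanning set, as Theorem~\ref{T.Tannaka} expects vector spaces $\Cat(k,l)$. Since $T_\bullet$ is linear by construction (we extended $T_{\alpha p+q}=\alpha T_p+T_q$), the image of the subspace $\Kat(k,l)$ under this linear map is automatically a subspace, so this causes no trouble; one should simply note it explicitly. A related subtlety worth a remark is that $T_\bullet$ need not be injective, so the vectors $\{T_p\mid p\in\Kat(k,l)\}$ may satisfy linear relations not present among the partitions themselves — but this does not affect the statement, which only claims equality of the morphism space with the image, not a basis correspondence.
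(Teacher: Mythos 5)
Your proposal is correct and is essentially the paper's own argument: the paper's proof is the one-liner ``apply Tannaka--Krein duality to the image of $\Kat$ under the functor $p\mapsto T_p$,'' and you have simply spelled out the routine verifications (closure of the image under the category operations via functoriality, the normalization of $T_{\pairpart}$, and linearity of the image spaces) that this one-liner implicitly relies on. No gaps; your closing remark about non-injectivity of $T_\bullet$ correctly anticipates Remark~\ref{R.notinj}.
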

\begin{proof}
Follows by applying the Tannaka--Krein duality to the image of $\Kat$ by the functor $p\mapsto T_p$.
\end{proof}

In addition, we can express the associated quantum group $G$ very concretely by considering the universal $*$-algebra given by the intertwiner relations $T_pu^{\otimes k}=u^{\otimes l}T_p$. That is,
$$O(G)=\staralg(u_{ij},\;i,j=1,\dots,N\mid u=\bar u,\; T_pu^{\otimes k}=u^{\otimes l}T_p\;\forall p\in\Kat)$$

\begin{rem}
\label{R.notinj}
The functor $p\mapsto T_p$ is not injective. Consequently, given a~linear category of partitions $\Kat$ and the associated quantum group $G$, there may exist a~different partition category $\Kat'$ that corresponds to the same quantum group $G$. In particular, having $p\not\in\Kat$ does not imply that $T_p\not\in\Cat_G$.
\end{rem}

\begin{lem}
\label{L.strict}
Consider linear categories of partitions $\Kat_1$, $\Kat_2$ and the associated quantum groups $G_1$ and $G_2$. If $\Kat_1(k,l)\neq\Kat_2(k,l)$ for some $k,l\in\N_0$ with $k+l\le N$, then $G_1\neq G_2$.
\end{lem}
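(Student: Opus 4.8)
The plan is to prove the contrapositive: if $G_1=G_2$, then $\Kat_1(k,l)=\Kat_2(k,l)$ for all $k,l$ with $k+l\le N$. Since $\Cat_{G_i}(k,l)=\{T_p\mid p\in\Kat_i(k,l)\}=T_\bullet(\Kat_i(k,l))$, and $G_1=G_2$ forces $\Cat_{G_1}=\Cat_{G_2}$, it suffices to show that the functor $T_\bullet$, restricted to $\Part\nlin(k,l)$ with $k+l\le N$, is \emph{injective}. Indeed, once this is known, each $\Kat_i(k,l)$ is recovered from $\Cat_{G_i}(k,l)$ as its preimage under one and the same injective linear map, so equal images yield equal preimages, i.e. $\Kat_1(k,l)=\Kat_2(k,l)$.

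Injectivity of a linear map is equivalent to linear independence of the images of a basis, so the heart of the matter is the claim that the family $\{T_p\mid p\in\Part(k,l)\}$ is linearly independent whenever $k+l\le N$. As each $T_p$ is determined by the $\{0,1\}$-valued function $\delta_p$ on $\{1,\dots,N\}^{k+l}$, it is enough to show that the functions $\delta_p$ are linearly independent. To organize this, I would introduce the refinement partial order on $\Part(k,l)$: write $p\le r$ if every block of $p$ is contained in a block of $r$. For a multi-index $\mathbf{m}=(\mathbf{i},\mathbf{j})$ let $\ker\mathbf{m}$ denote the partition whose blocks are the level sets of $\mathbf{m}$; then directly from the definition, $\delta_p(\mathbf{m})=1$ exactly when $p\le\ker\mathbf{m}$.

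The key step is a realizability observation: because $k+l\le N$ and every partition of $k+l$ points has at most $k+l$ blocks, for each $p$ there is a multi-index $\mathbf{m}_p$ with entries in $\{1,\dots,N\}$ and $\ker\mathbf{m}_p=p$ (assign a distinct value to each block). Evaluating a hypothetical relation $\sum_p c_p\delta_p=0$ at $\mathbf{m}_p$ gives $\sum_{p'\le p}c_{p'}=0$, a system triangular with respect to the refinement order. Taking $p$ minimal (finest) among those with $c_p\ne0$ forces every strictly finer term to vanish, leaving $c_p=0$, a contradiction. Hence the $\delta_p$, and thus the $T_p$, are linearly independent.

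The place where the hypothesis $k+l\le N$ is essential, and what I expect to be the only real obstacle, is precisely this realizability step: one must be able to choose, for every relevant partition $p$, an index tuple over the alphabet $\{1,\dots,N\}$ whose kernel is exactly $p$. This requires $N$ to be at least the number of blocks of $p$, and the worst case (the partition into singletons) has $k+l$ blocks; for $k+l>N$ the functions $\delta_p$ genuinely become dependent, which is the content of Remark~\ref{R.notinj}, so the bound cannot be relaxed. The triangular elimination itself is then routine, the only care being to run it from the finest partitions upward so that the system is genuinely triangular.
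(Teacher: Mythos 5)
Your proposal is correct and follows the same strategy as the paper: reduce the lemma to the statement that $p\mapsto T_p$ is injective on $\Part\nlin(k,l)$ whenever $k+l\le N$. The only difference is that the paper simply cites this injectivity as \cite[Corollary~3.4]{GWintsp}, whereas you prove it from scratch; your kernel-realizability and triangular-elimination argument is the standard proof of that cited result and is carried out correctly, including the correct identification of where the hypothesis $k+l\le N$ enters.
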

\begin{proof}
It is obvious that two identical quantum groups must have identical representation categories. However, as was mentioned in Remark~\ref{R.notinj}, there may exist two distinct categories of partitions that model the same quantum group since the functor $p\mapsto T_p$ is not injective.

Nevertheless, in \cite[Corollary~3.4]{GWintsp}, it was proven that $p\mapsto T_p$ acts injectively on all the morphism spaces $\Part\nlin(k,l)$ with $k+l\le N$. Therefore, if $\Kat_1(k,l)\neq\Kat_2(k,l)$ for $k+l\le N$, their images under $T_\bullet$ must be distinct and hence also the quantum groups are distinct.
\end{proof}

\subsection{Important examples of partition quantum groups}
\label{secc.qgex}

In this subsection, we summarize some important partition categories and the corresponding quantum groups.

First of all, note that the relation $T_pu^{\otimes k}=u^{\otimes l}T_p$ corresponding to the \emph{crossing partition} $p=\crosspart$ reads $u_{ij}u_{kl}=u_{kl}u_{ij}$, i.e.\ stands for commutativity. This means that the quantum group $G$ corresponding to a~given partition category $\Kat$ is actually a~group if $\crosspart\in\Kat$. Such a~category is also called a~\emph{group category}. Let us mention a~few important examples \cite{BS09}.

\begin{itemize}
\item The category $\Part\nlin=\langle\crosspart,\fourpart,\singleton\rangle\nlin$ of all partitions corresponds to the symmetric group $S_N$ represented by permutation matrices.
\item The category $\langle\crosspart,\fourpart\rangle\nlin$ spanned by all partitions with blocks of even size corresponds to the hyperoctahedral group $H_N$ represented by signed permutation matrices.
\item The category $\langle\crosspart\rangle\nlin$ spanned by all pair partitions (all blocks have size two) corresponds to the orthogonal group $O_N$.
\end{itemize}

In general, partitions containing some crossing always correspond to relations containing some commutation. Following this idea, \emph{free quantum groups} are those that correspond to categories spanned by non-crossing partitions only. The set of all non-crossing partitions itself forms a~category $NC\nlin$. As a~consequence, for any category $\Kat$, we can define a~new non-crossing category $\Kat^+:=\Kat\cap NC\nlin$ (taking $\Kat^+(k,l)=\Kat(k,l)\cap NC\nlin(k,l)$). Using this construction, we can define free counterparts of the group examples above \cite{BS09}.

\begin{itemize}
\item The category $NC\nlin=\langle\fourpart,\singleton\rangle\nlin$ of all non-crossing partitions corresponds to the free symmetric quantum group $S_N^+$.
\item The category $\langle\fourpart\rangle\nlin$ spanned by all non-crossing partitions with blocks of even size corresponds to the free hyperoctahedral quantum group $H_N^+$.
\item The smallest possible category $\langle\rangle\nlin$ spanned by all non-crossing pair partitions corresponds to the free orthogonal quantum group $O_N^+$.
\end{itemize}

We did not define $H_N^+$ and $S_N^+$ in this article. Thanks to Tannaka--Krein duality, one can consider the above given characterization as the definition. Nevertheless, similarly as in the case of $O_N^+$, the quantum groups were actually defined earlier. The definition of $S_N^+$ goes back to \cite{Wan98}, whereas $H_N^+$ comes from \cite{Bic04}. However, many other quantum groups were discovered subsequently by studying other categories of partitions.

\begin{defn}
\label{D.NCversion}
Let $\Kat\subset\Part\nlin$ be a~linear category of partitions containing the crossing partition $\crosspart$. A~category $\Kat^+\subset NC\nlin$ is called a~\emph{non-crossing version} of $\Kat$ if $\langle\Kat^+,\crosspart\rangle\nlin=\Kat$.
\end{defn}

\begin{rem}
One category can have more than one non-crossing version. As an example, take the category $\Kat=\langle\crosspart,\singleton\otimes\singleton\rangle\nlin$. One of its non-crossing versions is obviously $\langle\singleton\otimes\singleton\rangle\nlin$. Another is given by intersection with $NC\nlin$, which can be written as $\Kat\cap NC\nlin=\langle\Labac\rangle\nlin$ and it is different from the previous one \cite{Web13}. Infinitely many new categories interpolating these two were discovered recently \cite{GWext}. The non-crossing version constructed as an intersection with $NC\nlin$ is always the largest one and could be somehow considered as the canonical one. Nevertheless, smaller non-crossing versions may be actually more interesting since the smaller the category, the ``more free'' the quantum group.
\end{rem}

\begin{rem}
If we are in the ``easy case'', i.e.\ if we work only with partitions and not their linear combinations, then every group category $\Kat$ has its non-crossing version -- namely the intersection $\Kat\cap NC\nlin$. This might not hold when working with linear combinations of partitions.
\end{rem}

Similarly as commutativity is represented by the partition $\crosspart$, the $(-1)$-commu\-ta\-tiv\-ity is represented by the linear combination $-\crosspart+2\Paaaa$. In particular the quantum group $O_N^{-1}$ is represented by the linear category of partitions $\langle\Paaaa-2\crosspart\rangle\nlin$ \cite{GWgen}. 

\begin{rem}
As a consequence, we have that $H_N=O_N\cap O_N^{-1}$. So, $O_N^{-1}$ can be seen not only as a~deformation of $O_N$, but also as a~liberation of $H_N$. 
\end{rem}

Actually, we have the following.

\begin{prop}[{\cite[Propositions 4.5, 6.9]{GWgen}}]
For any $N\in\N$, there is an isomorphism of monoidal $*$-categories $\phi\colon\langle\crosspart,\fourpart\rangle\nlin\to\langle\crosspart,\fourpart\rangle\nlin$ mapping
$$\crosspart\mapsto -\crosspart+2\Paaaa,\qquad \Paaaa\mapsto\Paaaa.$$
\end{prop}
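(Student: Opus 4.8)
The plan is to exhibit $\phi$ by specifying it on the generators $\crosspart,\fourpart$ (and the structural morphisms $\idpart,\pairpart$, which it fixes) and then to check that these images satisfy all the relations of the category, so that the assignment extends to a monoidal $*$-functor. Writing $F:=T_{-\crosspart+2\Paaaa}=-T_\crosspart+2T_\Paaaa$, a direct computation on basis vectors shows that $F$ is the \emph{signed flip}: $F(e_i\otimes e_j)=e_i\otimes e_j$ for $i=j$ and $F(e_i\otimes e_j)=-e_j\otimes e_i$ for $i\neq j$. Thus the whole question becomes whether $F$, used in place of the ordinary flip $T_\crosspart$, still satisfies together with $T_\fourpart$ the defining relations of $\langle\crosspart,\fourpart\rangle\nlin$.

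The key observation I would use is a $\Z_2^N$-grading of $\C^N$, placing $e_i$ in degree $\epsilon_i$. Under it, the maps $T_p$ with $p$ having only even blocks are exactly the degree-preserving ones, and $F$ is precisely the symmetric braiding attached to the symmetric bicharacter $\beta(\epsilon_i,\epsilon_j)=(-1)^{1-\delta_{ij}}$ on $\Z_2^N$. Consequently $F$ is a self-adjoint involution ($F^*=F$, $F^2=\id$) obeying the braid relation, and, since $T_\fourpart$ and $T_\pairpart$ are degree-preserving, the braiding $F$ is natural with respect to them; concretely $T_\fourpart F=FT_\fourpart=T_\fourpart$ and $FT_\pairpart=T_\pairpart$. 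In each such identity the off-diagonal signs carried by $F$ occur an even number of times and cancel. I would stress that this cancellation is exactly what the parity hypothesis buys us: against an \emph{odd} morphism, such as the rotation of $\fourpart$ into $\Part(2,1)$, the analogous naturality square genuinely fails, and it is only because such morphisms are absent from $\langle\crosspart,\fourpart\rangle\nlin$ that $\phi$ can exist.

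With the relations in hand I would extend $\phi$ to the whole category. The clean route is to invoke a presentation of $\langle\crosspart,\fourpart\rangle\nlin$ by the generators $\crosspart,\fourpart$ subject to exactly the relations checked above (involutivity, self-adjointness, the braid relation, idempotency and self-adjointness of $\fourpart$, the compatibilities between $\crosspart$ and $\fourpart$, and the duality relations), so that the functor is determined by, and exists for, any images satisfying them; equivalently, one may define the twisted maps $\tilde T_p$ by the $\beta$-sign rule, verify multiplicativity directly, and recover $\phi$ through Tannaka--Krein after identifying the resulting concrete category with $\Cat_{H_N}$. Finally, $\phi$ is an isomorphism because it is an involution: $\phi^2(\crosspart)=-(-\crosspart+2\Paaaa)+2\Paaaa=\crosspart$ and $\phi^2(\fourpart)=\fourpart$, whence $\phi^2=\id$ and $\phi^{-1}=\phi$.

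The main obstacle is the passage from ``the generators' images satisfy the relations'' to ``the functor is well defined''. Because $p\mapsto T_p$ is not injective, relations cannot simply be transported through the concrete maps in all degrees; one needs either a genuine generators-and-relations presentation of the even-partition category, or a careful combinatorial proof that the $\beta$-sign rule is multiplicative under composition and tensor product, tracking the signs produced when closed loops collapse to the scalar $N$. Organising these signs coherently, together with pinpointing that evenness is precisely what removes the odd morphisms on which $F$ fails to be natural, is where the real work lies.
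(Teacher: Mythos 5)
The paper does not actually prove this proposition; it is imported verbatim from [GWgen, Propositions 4.5, 6.9], so there is no internal proof to measure you against. Judged on its own terms, your strategy --- reading $-\crosspart+2\Paaaa$ as the symmetric braiding attached to the bicharacter $\beta(i,j)=(-1)^{1-\delta_{ij}}$ on a $\Z_2^N$-grading, checking the braid relation, self-adjointness, involutivity and naturality against the even generators, and observing that $\phi^2=\id$ forces bijectivity --- is the right picture, and your computations on the generators are correct. The remark that naturality of $F$ fails precisely against odd blocks, so that evenness is what makes the twist possible, is also the correct conceptual point (though a rotation of $\fourpart$ lands in $\Part(1,3)$ or $\Part(2,2)$, never in $\Part(2,1)$; the offending odd morphisms are rotations of odd blocks such as $\singleton$ or $\Laaa$).

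The genuine gap is the one you name yourself but do not close: passing from ``the images of the generators satisfy the relations'' to ``$\phi$ is a well-defined functor''. Neither of your two proposed routes works as stated. The category $\langle\crosspart,\fourpart\rangle\nlin$ is defined as the smallest linear subcategory of $\Part\nlin$ containing the generators, not by a presentation; you establish no presentation (and any correct one must involve the loop parameter $N$, since composition in $\Part\nlin$ produces factors of $N$ from closed middle components), so the universal-property argument has nothing to stand on. The Tannaka--Krein route is worse: it can only produce an automorphism of the concrete category $\{T_p\mid p\in\langle\crosspart,\fourpart\rangle\nlin\}=\Cat_{H_N}$, and since $p\mapsto T_p$ is not injective for small $N$ (Remark~\ref{R.notinj}, Lemma~\ref{L.strict}), this does not lift to the abstract linear category of partitions, which is what the proposition asserts. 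What is actually required --- and what the cited source supplies --- is an explicit definition of $\phi(p)$ for every partition $p$ with even blocks (a signed combination governed by the crossing structure of $p$), together with a combinatorial verification that this assignment intertwines $\otimes$, $\cdot$ (including the $N$-factors from collapsing loops) and $*$. That sign bookkeeping is the entire content of the proof, and it is exactly the part your proposal defers.
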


In this sense, we can extend Definition \ref{D.NCversion} as follows.

\begin{defn}
\label{D.NCversion2}
Let $\Kat\subset\Part\nlin$ be a~linear category of partitions containing $-\crosspart+2\Paaaa$. A~category $\Kat^+\subset NC\nlin$ is called a~\emph{non-crossing version} of $\Kat$ if $\langle\Kat^+,-\crosspart+2\Paaaa\rangle\nlin=\Kat$.
\end{defn}

Note that the definition may not be compatible with Def.~\ref{D.NCversion}. That is, considering a~category~$\Kat$ that contains both $\crosspart$ and $-\crosspart+2\connecterpart$, then its non-crossing version according to Def.~\ref{D.NCversion} may not satisfy Def.~\ref{D.NCversion2} and vice versa.

Finally, we mention certain category isomorphism from \cite{GWintsp}. We denote $\tau_{(N)}:=\idpart-\frac{2}{N}\disconnecterpart\in\Part\nlin$. It holds that $\tau_{(N)}\cdot\tau_{(N)}=\idpart$ (hence also $T_{\tau_{(N)}}^2=I$). Consequently, the collection of maps $\T_{(N)}\colon\Part\nlin(k,l)\to\Part\nlin(k,l)$ defined as $p\mapsto \tau_{(N)}^{\otimes l}p\tau_{(N)}^{\otimes k}$ for $p\in\Part\nlin(k,l)$ is a~category isomorphism. Given a~category $\Kat\subset\Part\nlin$ corresponding to some quantum group $G$, its image under $\T_{(N)}$ corresponds to a~similar quantum group $G'=T_{\tau_{(N)}}GT_{\tau_{(N)}}$.

\subsection{Alternative maps associated to partitions and Möbius inversion}

In \cite{Maa18} alternative maps associated to partition were defined. Take $N\in\N$. For a~partition $p\in\Part(k,l)$, we define $\hat T_p\colon(\C^N)^{\otimes k}\to(\C^N)^{\otimes l}$ as
$$\hat T_p(e_1\otimes\cdots\otimes e_k)=\sum_{j_1,\dots,j_l=1}^N\hat\delta_p(\mathbf{i},\mathbf{j})(e_{j_1}\otimes\cdots\otimes e_{j_l}),$$
where $\hat\delta_p(\mathbf{i},\mathbf{j})$ equals to one if and only if the indices are equal precisely when the corresponding points are in the same block, otherwise it equals to zero. (Recall that the condition for $\delta_p(\mathbf{i},\mathbf{j})=1$ was: if the points are in the same block, the indices must coincide. In contrast, for the $\hat\delta_p$ we require equivalence: points are in the same block if and only if indices coincide.) Then the definition can be linearly extended to the whole space $\Part\nlin(k,l)$.

\begin{ex}
\label{ex.hatsing}
Take $p=\singleton^{\otimes l}$. Then $\delta_{\singleton^{\otimes l}}(\emptyset,\mathbf{j})=1$ for every tuple $(j_1,\dots,j_l)$. On the other hand $\hat\delta_{\singleton^{\otimes l}}(\emptyset,\mathbf{j})=1$ if and only if all the indices $j_1,\dots,j_l$ are mutually distinct. If we choose $l=N$, then $\hat\delta_{\singleton^{\otimes N}}(\emptyset,\mathbf{j})=1$ if and only if $(j_1,\dots,j_N)$ is a~permutation of the set $\{1,\dots,N\}$. If we choose $l>N$, then $\hat\delta_{\singleton^{\otimes l}}(\emptyset,\mathbf{j})=0$ for any~$\mathbf{j}$.
\end{ex}

It was shown in \cite[Lemma 4.24]{Maa18} that for any partition $p\in\Part(k,l)$, we have
$$T_p=\sum_{q\ge p}\hat T_q,$$
where we denote $q\ge p$ for $q\in\Part(k,l)$ if the partition $q$ was made from $p$ by merging some blocks.

It should be pointed out that the map $p\mapsto\hat T_p$ is not a~functor. Nevertheless, it still might be quite useful. The above mentioned formula shows us, that we can describe the linear map $\hat T_p$ by applying the functor $p\mapsto T_p$ on a~certain linear combination of partitions. More precisely, we can do the following.

For every partition $p\in\Part(k,l)$, we define a~linear combination $\hat p\in\Part\nlin(k,l)$ such that we have $p=\sum_{q\ge p}\hat p$. It can be easily seen that such an operation is indeed well defined. Actually, one can express explicitly the image $\hat p$ using the Möbius inversion formula as $\hat p=\sum_{q\ge p}\mu(p,q)q$, where $\mu$ is the Möbius function on the lattice of partitions, see \cite[Lecture 10]{NS06}. We extend this definition linearly to the whole space $\Part\nlin(k,l)$. Then we have $T_{\hat p}=\hat T_p$.

\begin{ex}
Let us compute $\widehat{\Labc}$ from the relation $p=\sum_{q\ge p}\hat q$. In order to do that, we actually have to compute $\hat p$ for every $p\in\Part(0,3)$. So, the largest partition with respect to the order $\le$ is the block partition $\Laaa$. Here we directly have $\Laaa=\widehat{\Laaa}$. Now, we have $\Laab=\widehat{\Laab}+\widehat{\Laaa}$, so $\widehat{\Laab}=\Laab-\Laaa$. Similarly, we can compute $\widehat{\Laba}$ and $\widehat{\Labb}$. Finally, we have
$$\Labc=\widehat{\Labc}+\widehat{\Laab}+\widehat{\Laba}+\widehat{\Labb}+\widehat{\Laaa}=\widehat{\Labc}+\Laab+\Laba+\Labb-2\Laaa,$$
so $\widehat{\Labc}=\Labc-\Laab-\Laba-\Labb+2\Laaa$.
\end{ex}

\subsection{Coxeter groups of type $D$}

The Coxeter group of type $D$ with $N$ generators, sometimes refered to as the \emph{demihyperoctahedral group}, is defined by the Coxeter diagram
$$\begin{tikzpicture}
\draw ( 0.3,0.7) circle (2pt) [fill]  --
      ( 1,0) circle (2pt) [fill]  --
      ( 2,0) circle (2pt) [fill]  --
      ( 3,0) circle (2pt) [fill];
\draw [dotted] (3,0) -- (4,0);
\draw ( 4,0) circle (2pt) [fill]  --
      ( 5,0) circle (2pt) [fill];
\draw ( 0.3,-0.7) circle (2pt) [fill]  --
      ( 1,0);
\end{tikzpicture}$$
Strictly speaking, Coxeter groups of type $D$ are defined only for $N\ge 4$ since for $N=1,2,3$ the definition coincides with the type $A$. Hence, Coxeter group with four generators corresponding to the diagram
\begin{tikzpicture}[x={(1ex,0ex)},y={(0ex,1ex)},baseline=-0.25em]

\draw ( 0.3,0.7) circle (1pt) [fill]  --
      ( 1,0) circle (1pt) [fill]  --
      ( 2,0) circle (1pt) [fill];
\draw ( 0.3,-0.7) circle (1pt) [fill]  --
      ( 1,0);
\end{tikzpicture}
can be defined as
$$\langle g_0,g_1,g_2,g_3\mid g_ig_j=g_jg_i,\;g_ig_0g_i=g_0g_ig_0\;\forall i,j=1,2,3\}.$$

Coxeter group of type $D$ with $N$ generators can be also realized as a~matrix group using matrices of size $N$ as follows
$$D_N:=\{X\in H_N\mid \perm X=1\},$$
where $H_N$ is the hyperoctahedral group represented by signed permutations and $\perm X$ is the {\em permanent} of a~matrix $X$ defined as
$$\perm X=\sum_{\sigma\in S_N}X_{1\sigma(1)}\cdots X_{N\sigma(N)}.$$
In this case, the permanent just multiplies the signs of the signed permutation.

\begin{rem}
The permanent as a~map $\perm\colon\GL(n,\C)\to\C$ is not a~homomorphism. Nevertheless it acts as a~homomorphism on the subgroup $H_N\subset\GL(n,\C)$ of signed permutation matrices. There is a~quantum group explanation for that. The signed permutation matrices actually have $(-1)$-commutative entries since there is just one non-zero element in each row and column. As we mentioned in Section \ref{secc.qcom}, permanent is also a~$(-1)$-determinant, which serves as a~representation of quantum groups with $(-1)$-commutative matrix entries.
\end{rem}

Our goal is to find a~free quantum analogue for the Coxeter groups of type $D$. Note that we already know one quantum group, whose classical version is $D_N$, namely the $(-1)$-deformation of the special orthogonal group $SO_N^{-1}$. Indeed, $SO_N^{-1}$ is obtained from $O_N^{-1}$ by the relation $\perm u=1$. Since the classical version of $O_N^{-1}$ is $H_N$, adding the permanent relation pushes us to $D_N$. Nevertheless, $SO_N^{-1}$ should not be considered as a~good candidate for {\em free} counterpart of $D_N$ -- its commutativity relations are deformed, not liberated.

\section{Free permanent and free $D_4^+$}
\label{sec.perm}

As indicated in Sect.~\ref{secc.qgex}, a~good approach to define a~free version of a~group is to study its representation category using partitions. In the following, we find the intertwiner corresponding to the $(-1)$-determinant alias permanent and describe it as a~linear combination of partitions. Then we define its free version.

\subsection{Intertwiner corresponding to the permanent}

%The basic idea for the definition of the free version of $D_N$ might be the {\em topological generation}, i.e.\ constructing the quantum group $G:=\langle D_N,S_N^+\rangle$. Topological generation is conveniently described by intersecting the representation categories. Representation categories are conveniently described as linear categories of partitions. Hence, we might try to find the category $\D_N$ corresponding to the quantum group $D_N$ and define $\Cat:=\D_N\cap NC$. The corresponding quantum group $G'$ might also be considered as the free version of $D_N$. Note that we have $G'\supset G$, but since the functor $T$ interpreting the partition categories is not faithful, we might to have the equality. (See the case $N=3$ for example.)
%
%However, in both cases it might happen that $G$ or $G'$ does not define any new quantum group, but is just equal to $H_N^+$. In that case, it seems that there is no reasonable free version of $D_N$.

Let us fix a~natural number $N\in\N$. We define the following vector in $(\C^N)^{\otimes N}$

$$P_{(N)}:=\sum_{\sigma\in S_N}e_{\sigma(1)}\otimes\cdots\otimes e_{\sigma(N)},$$
That is, in coordinates
$$[P_{(N)}]_{i_1,\dots,i_N}=\begin{cases}1 & \text{if $(i_1,\dots,i_N)$ is a~permutation,}\\ 0 & \text{otherwise.}\end{cases}$$

\begin{prop}
\label{P.P}
Let $G$ be a~compact matrix quantum group with $N\times N$ fundamental representation $u$ with $(-1)$-commutative entries. Then $\det_{-1}u$ is a~subrepresentation of $u^{\otimes N}$ corresponding to the one-dimensional invariant subspace spanned by the vector $P_{(N)}$.
\end{prop}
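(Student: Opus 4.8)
The plan is to realize $\mathrm{span}(P_{(N)})$ as an invariant subspace of $u^{\otimes N}$ by verifying a single intertwiner relation. Concretely, I would introduce the map $T_{P_{(N)}}\colon\C\to(\C^N)^{\otimes N}$ determined by $1\mapsto P_{(N)}$; since $P_{(N)}$ is a nonzero vector, $T_{P_{(N)}}$ is injective with image exactly $\mathrm{span}(P_{(N)})$. Under the standard correspondence between invariant subspaces and injective intertwiners from one-dimensional representations, proving the proposition amounts to showing
$$T_{P_{(N)}}\in\Mor(\det_{-1}u,\,u^{\otimes N}),$$
where $\det_{-1}u$ is the one-dimensional representation furnished by Proposition \ref{P.detrep}. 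Writing this out in coordinates, using $[u^{\otimes N}]_{\mathbf{i}\mathbf{j}}=u_{i_1j_1}\cdots u_{i_Nj_N}$ and that $[P_{(N)}]_{\mathbf{j}}$ is the indicator of $\mathbf{j}$ being a permutation, the whole statement reduces to the identity
\begin{equation*}
\sum_{\sigma\in S_N}u_{i_1\sigma(1)}\cdots u_{i_N\sigma(N)}=[P_{(N)}]_{\mathbf{i}}\cdot\det_{-1}u
\qquad\text{for every }\mathbf{i}=(i_1,\dots,i_N).
\end{equation*}

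I would then split according to whether $\mathbf{i}$ is a permutation. If $\mathbf{i}=(\pi(1),\dots,\pi(N))$ for some $\pi\in S_N$, then $[P_{(N)}]_{\mathbf{i}}=1$ and the left-hand side is $\sum_{\sigma\in S_N}u_{\pi(1)\sigma(1)}\cdots u_{\pi(N)\sigma(N)}$, which equals $\det_{-1}u$ by the independence of the $(-1)$-determinant from the chosen row permutation recorded in the displayed formulas of Section \ref{secc.qcom}; so both sides coincide. This case is essentially bookkeeping once the $(-1)$-determinant is known to be well defined.

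The only substantial case is when $\mathbf{i}$ is \emph{not} a permutation, where I must show the left-hand side vanishes, matching the right-hand side $0$. Here two entries agree, say $i_a=i_b$ with $a\neq b$, and I would pair each $\sigma$ with $\sigma\circ\tau$, where $\tau$ is the transposition $(a\,b)$. The key point---this is exactly where the $(-1)$-commutativity enters, and the very same sign computation already occurs in the proof of Proposition \ref{P.detrep}---is that
$$u_{i_1\sigma(1)}\cdots u_{i_N\sigma(N)}=-\,u_{i_1(\sigma\circ\tau)(1)}\cdots u_{i_N(\sigma\circ\tau)(N)}.$$
The two products differ only in positions $a,b$, where the two factors $u_{i_a\sigma(a)}$ and $u_{i_a\sigma(b)}$ (both lying in row $i_a$) get interchanged. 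Moving $u_{i_a\sigma(a)}$ rightwards to meet $u_{i_a\sigma(b)}$, swapping them, and moving the other factor back leftwards, each intervening factor in row $i_a$ is passed twice (net sign $+1$) while factors in other rows commute freely; the only surviving sign is the single anticommutation $u_{i_a\sigma(a)}u_{i_a\sigma(b)}=-u_{i_a\sigma(b)}u_{i_a\sigma(a)}$, giving overall $-1$. Since $\sigma\mapsto\sigma\circ\tau$ is a fixed-point-free involution of $S_N$, the terms cancel in pairs and the sum is $0$. This establishes the required identity in all cases, so $T_{P_{(N)}}$ is an intertwiner and $\mathrm{span}(P_{(N)})$ is a one-dimensional invariant subspace carrying the representation $\det_{-1}u$. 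I expect the sign tracking in this last case to be the only genuinely delicate step, and I would handle it by reducing explicitly to the already-verified computation of Proposition \ref{P.detrep}.
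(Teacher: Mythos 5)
Your proposal is correct and follows essentially the same route as the paper: both reduce to the coordinate identity $\sum_{\sigma}u_{i_1\sigma(1)}\cdots u_{i_N\sigma(N)}=[P_{(N)}]_{\mathbf i}\det_{-1}u$, treat the permutation case via the row-independence of $\det_{-1}$, and kill the non-permutation case by pairing $\sigma$ with $\sigma\circ\tau$ using $(-1)$-commutativity. You merely phrase the conclusion more formally as $T_{P_{(N)}}\in\Mor(\det_{-1}u,u^{\otimes N})$ and spell out the sign bookkeeping in more detail, both of which are fine.
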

\begin{proof}
Let us write
$$[u^{\otimes N}P_{(N)}]_{i_1,\dots,i_N}=\sum_{j_1,\dots,j_N=1}^Nu_{i_1j_1}\cdots u_{i_Nj_N}[P_{(N)}]_{j_1,\dots,j_N}=\sum_{\sigma\in S_N}u_{i_1\sigma(1)}\cdots u_{i_N\sigma(N)}.$$
Now, using the deformed commutativity, we can see that actually
$$[u^{\otimes N}P_{(N)}]_{i_1,\dots,i_N}=\begin{cases}\det_{-1} u & \text{if $(i_1,\dots,i_N)$ is a~permutation},\\ 0 & \text{otherwise.}\end{cases}$$
Indeed, if $(i_1,\dots,i_N)$ is a~permutation, then we have exactly the defining formula for the $(-1)$-determinant. If $i_j=i_k$ for some $j,k$, we have that $u_{i_1\sigma(1)}\cdots u_{i_N\sigma(N)}=-u_{i_1(\sigma\circ\tau)(1)}\cdots u_{i_N(\sigma\circ\tau)(N)}$, where $\tau$ is the transposition of $j$ and $k$, so all the terms will cancel out.

So, we have $u^{\otimes N}P_{(N)}=\det_{-1}u\cdot P_{(N)}\in O(G)\otimes\spanlin\{P_{(N)}\}$, which is what we wanted.
\end{proof}

\begin{lem}
\label{L.P}
It holds that $P_{(N)}=\hat T_{\singleton^{\otimes N}}=T_{\widehat{\singleton^{\otimes N}}}$.
\end{lem}
\begin{proof}
Follows from Example \ref{ex.hatsing}
\end{proof}

In terms of intertwiners, Proposition \ref{P.P} tells us that if $u$ has $(-1)$-commutative entries, then we have $P_{(N)}P_{(N)}^*\in\Mor(u^{\otimes N},u^{\otimes N})$ (since $P_{(N)}P_{(N)}^*$ is the projection onto the subspace spanned by $P_{(N)}$). In addition, the equality $\det_{-1}u=1$ is equivalent to saying $P_{(N)}\in\Mor(1,u^{\otimes N})$. Lemma \ref{L.P} then allows us to formulate everything in terms of partitions since the vector $P_{(N)}$ can be represented by the partition $\widehat{\singleton^{\otimes N}}$.

\begin{rem}
Although $H_N$ is represented by a~matrix with $(-1)$-commutative entries, the category $\langle\crosspart,\fourpart\rangle\nlin$, which is usually used to describe this group, does not contain the element $\widehat{\singleton^{\otimes N}}\widehat{\singleton^{\otimes N}}^*$. This serves as an illustration of what was mentioned in Remark \ref{R.notinj}
\end{rem}

The above formulated proposition is is by no means a~new result. The intertwiner $P_{(N)}$ was used already in \cite{Wor88} to define the quantum group $SU^q_N$. The work of Woronowicz actually provides also kind of a~converse to Proposition \ref{P.P}

\begin{prop}
Let $G$ be a~compact matrix quantum group with $N\times N$ fundamental representation $u$ such that $P_{(N)}\in\Mor(1,u^{\otimes N})$. Then $u$ has $(-1)$-commutative entries.
\end{prop}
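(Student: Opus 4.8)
The plan is to prove the converse of Proposition~\ref{P.P}: assuming $P_{(N)}\in\Mor(1,u^{\otimes N})$, I must deduce the three families of $(-1)$-commutativity relations. The intertwiner condition $P_{(N)}\in\Mor(1,u^{\otimes N})$ means precisely that $u^{\otimes N}P_{(N)}=P_{(N)}$ (since $P_{(N)}$ is a fixed vector, i.e.\ an intertwiner from the trivial representation). Reading this off in coordinates as in the proof of Proposition~\ref{P.P}, I get
$$\sum_{\sigma\in S_N}u_{i_1\sigma(1)}\cdots u_{i_N\sigma(N)}=[P_{(N)}]_{i_1,\dots,i_N}=\begin{cases}1&\text{if }(i_1,\dots,i_N)\text{ is a permutation,}\\0&\text{otherwise.}\end{cases}$$
The key observation is that the right-hand side vanishes whenever some index repeats, so I would first extract information from the \emph{degenerate} tuples $(i_1,\dots,i_N)$ where exactly two indices coincide, as these should encode the anticommutation relations.

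The main technical step is to isolate individual relations from these global sums. First I would use the orthogonality of $u$: multiplying the relation $u^{\otimes N}P_{(N)}=P_{(N)}$ on the left by $(u^t)^{\otimes N}=(u^{\otimes N})^{*}$ is unnecessary, but I can instead exploit that $u$ is orthogonal to move factors around and to use that $\sum_k u_{ik}u_{jk}=\delta_{ij}$. The more efficient route is to choose the index tuple cleverly. For the first relation $u_{ik}u_{jk}=-u_{jk}u_{ik}$ (same second index, $i\neq j$), I would take a tuple $\mathbf{i}$ in which positions carrying the value from columns and pick $\sigma$'s that differ by a transposition; concretely, fixing $i_1=i_2$ (a repeated row index) forces the vanishing sum $\sum_\sigma u_{i_1\sigma(1)}u_{i_1\sigma(2)}u_{i_3\sigma(3)}\cdots=0$, and pairing each $\sigma$ with $\sigma\circ(1\,2)$ groups the terms into $(u_{i_1\sigma(1)}u_{i_1\sigma(2)}+u_{i_1\sigma(2)}u_{i_1\sigma(1)})(\text{common tail})$. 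To conclude that each individual bracket vanishes rather than just the total sum, I would run an induction on $N$ or use a separating/specialization argument, reducing the length-$N$ identity to shorter ones by applying the orthogonality relations $\sum_k u_{ik}u_{jk}=\delta_{ij}$ to contract a pair of legs.

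For the second family $u_{ki}u_{kj}=-u_{kj}u_{ki}$ (same first index), I would apply the identical argument to $u^t$ in place of $u$, noting that $u^t$ is again orthogonal and that $P_{(N)}$ is symmetric under the flip, so $P_{(N)}\in\Mor(1,(u^t)^{\otimes N})$ holds as well; this gives the column relations by symmetry. For the third family, the ordinary commutation $u_{ik}u_{jl}=u_{jl}u_{ik}$ with $i\neq j,\ k\neq l$, I expect it to follow once the two anticommutation families are established, by combining them: writing a degenerate sum that repeats a value in a way that mixes two distinct rows and two distinct columns and cancelling via a product of two transpositions yields a plus sign, forcing commutation. I anticipate the \textbf{main obstacle} to be the passage from the single global identity (one equation per index tuple) to the pointwise relations among the generators---the sums conflate many products, so the crux is a careful combinatorial bookkeeping (or an inductive contraction using orthogonality) that separates the contribution of a single transposition from the rest. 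This is essentially the content of Woronowicz's original argument in \cite{Wor88}, so I would either cite it or reproduce the separation lemma in the $N=4$ case of interest and remark that the general case is analogous.
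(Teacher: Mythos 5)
Your starting point is correct ($P_{(N)}\in\Mor(1,u^{\otimes N})$ means $u^{\otimes N}P_{(N)}=P_{(N)}$, and the vanishing of the coordinates at tuples with repeated indices is where the anticommutation is hiding), but the proof has a genuine gap exactly at the step you yourself flag as the main obstacle, and the devices you offer for closing it do not work as described. Pairing $\sigma$ with $\sigma\circ(1\,2)$ only shows that a \emph{sum} of anticommutators multiplied by uncontrolled tails vanishes. To isolate a single anticommutator you propose ``contracting a pair of legs using the orthogonality relations $\sum_k u_{ik}u_{jk}=\delta_{ij}$'', but applying this to two legs of the identity $u^{\otimes N}P_{(N)}=P_{(N)}$ amounts to composing with $T_{\uppairpart}$, and $(\id^{\otimes (N-2)}\otimes T_{\uppairpart})P_{(N)}=0$ because no tuple with a repeated index is a permutation; you obtain $0=0$ and learn nothing. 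The proposed induction on $N$ is not set up either ($P_{(N-1)}$ lives over a different matrix size and a different quantum group), and your claim that the mixed relations $u_{ik}u_{jl}=u_{jl}u_{ik}$ ($i\neq j$, $k\neq l$) ``follow by combining'' the two anticommutation families is asserted without proof and is far from obvious -- the three families are independent constraints. Deferring all of this to a citation of \cite{Wor88} points at the right source but leaves the actual argument missing.

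The missing idea -- which is what Woronowicz, and the paper following him, actually uses -- is to contract not with the duality morphism but with $P_{(N)}^*$ itself. Since intertwiner spaces are closed under adjoints, tensor products and composition, the map $(\id\otimes\id\otimes P_{(N)}^*)(P_{(N)}\otimes\id\otimes\id)$ lies in $\Mor(u^{\otimes 2},u^{\otimes 2})$ with no further work. Its $\bigl((i_1i_2),(j_1j_2)\bigr)$ entry is $\sum_{k_1,\dots,k_{N-2}}[P_{(N)}]_{i_1i_2k_1\cdots k_{N-2}}[P_{(N)}]_{k_1\cdots k_{N-2}j_1j_2}$, which equals $(N-2)!$ exactly when $i_1\neq i_2$ and $\{i_1,i_2\}=\{j_1,j_2\}$, and $0$ otherwise; that is, the composition equals $(N-2)!\,(T_{\Pabba}+T_{\crosspart}-2T_{\connecterpart})$. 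Subtracting the identity $T_{\Pabba}$ gives $T_{-\crosspart+2\connecterpart}\in\Mor(u^{\otimes 2},u^{\otimes 2})$, and this single intertwiner encodes all three families of $(-1)$-commutativity relations simultaneously. No combinatorial separation of individual transpositions, no induction, and no separate treatment of rows, columns and mixed entries is ever needed. I recommend you rebuild the proof around this one composition.
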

\begin{proof}
We follow the computation from \cite[page 66]{Wor88}. It is enough to show that $P_{(N)}\in\Cat_G$ implies $T_{-\crosspart+2\connecterpart}\in\Cat_G$. Let us compute the following
\begin{align*}
&[(\id\otimes\id\otimes P_{(N)}^*)(P_{(N)}\otimes\id\otimes\id)]_{(i_1i_2),(j_1j_2)}\\
&\qquad=\sum_{k_1,\dots,k_{N-2}}[P_{(N)}]_{i_1i_2k_1\cdots k_{N-2}}[P_{(N)}]_{k_1\cdots k_{N-2}j_1j_2}\\
&\qquad=\begin{cases}(N-2)!&i\neq j,\; \{i,j\}=\{k,l\}\\0&\text{otherwise}\end{cases}\\
&\qquad=(N-2)!\,(\delta_{\Pabba}(i_1i_2,j_1j_2)+\delta_{\crosspart}(i_1i_2,j_1j_2)-2\delta_{\connecterpart}(i_1i_2,j_1j_2)).
\end{align*}

Consequently,
\[\Cat_G\owns(1_N\otimes 1_N\otimes P_{(N)}^*)(P_{(N)}\otimes 1_N\otimes 1_N)=(N-2)!\,(T_{\Pabba}+T_{\crosspart}-2T_{\connecterpart})\qedhere\]
\end{proof}

\begin{ex}
We can define the following categories.
\begin{itemize}
\item The category $\langle\widehat{\singleton^{\otimes N}}\rangle\nlin=\langle-\crosspart+2\Paaaa,\widehat{\singleton^{\otimes N}}\rangle\nlin$ corresponds to the quantum group $SO_N^{-1}$.
\item The category $\langle\widehat{\singleton^{\otimes N}},\fourpart\rangle\nlin=\langle-\crosspart+2\Paaaa,\widehat{\singleton^{\otimes N}},\fourpart\rangle\nlin$ corresponds to the Coxeter group $D_4$.
\end{itemize}
\end{ex}

\subsection{Freeing the permanent relation, free $SO_4^{-1}$}

For this subsection, we set $N=4$. The key observation here is the fact that
\begin{equation}
\label{eq.abcdT}
\widehat{\Labcd}=-4\T_{(4)}\fourpart-(2\fourpart-\Labab)+\Laabb+\Labba.
\end{equation}

Hence, it might be a~good idea to substitute $\widehat{\Labcd}$ with $\T_{(4)}\fourpart$.

\begin{prop}
\label{P.SONC}
The category $\langle\T_{(4)}\fourpart\rangle\flin$ is a~non-crossing version of the category $\langle\widehat{\Labcd}\rangle\flin$.
\end{prop}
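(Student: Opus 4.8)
The plan is to verify directly the two clauses of Definition~\ref{D.NCversion2}: that $\T_{(4)}\fourpart$ is non-crossing, and that $\langle\T_{(4)}\fourpart,-\crosspart+2\Paaaa\rangle\flin=\langle\widehat{\Labcd}\rangle\flin$. The first clause is immediate. Since $\tau_{(4)}=\idpart-\frac12\disconnecterpart$ and $\fourpart$ are linear combinations of non-crossing partitions, and tensor products, compositions and linear combinations of non-crossing partitions stay non-crossing, the element $\T_{(4)}\fourpart=\tau_{(4)}^{\otimes\bullet}\fourpart\tau_{(4)}^{\otimes\bullet}$ lies in $NC\flin$; hence $\langle\T_{(4)}\fourpart\rangle\flin\subset NC\flin$.

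For the equality, I would first recall (from the discussion preceding the proposition, where the converse of Proposition~\ref{P.P} is established) that $-\crosspart+2\Paaaa\in\langle\widehat{\Labcd}\rangle\flin$, so that $\langle\widehat{\Labcd}\rangle\flin=\langle\widehat{\Labcd},-\crosspart+2\Paaaa\rangle\flin=:\Kat$; write also $\Dat:=\langle\T_{(4)}\fourpart,-\crosspart+2\Paaaa\rangle\flin$. The central observation is that \emph{every} even partition appearing on the right-hand side of~\eqref{eq.abcdT} already lies in both $\Kat$ and $\Dat$, because the two categories share the generator $-\crosspart+2\Paaaa$ and every category contains the pair partition $\pairpart$. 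Concretely, $\Laabb=\pairpart\otimes\pairpart$ and its rotation $\Labba$ belong to any linear category of partitions (categories are closed under left/right rotation by Frobenius reciprocity), while applying $\Lrot^2$ to $-\crosspart+2\Paaaa$ carries it into $\Part(0,4)$ as $2\fourpart-\Labab$. Thus the combination $-(2\fourpart-\Labab)+\Laabb+\Labba$ belongs to both $\Kat$ and $\Dat$.

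Given this, \eqref{eq.abcdT} does the rest. Inside $\Kat$ we have $\widehat{\Labcd}$ together with the even terms just listed, so solving \eqref{eq.abcdT} for $\T_{(4)}\fourpart$ gives $\T_{(4)}\fourpart\in\Kat$, and since $-\crosspart+2\Paaaa\in\Kat$ this yields $\Dat\subset\Kat$. Symmetrically, inside $\Dat$ we have $\T_{(4)}\fourpart$ together with the same even terms, so \eqref{eq.abcdT} gives $\widehat{\Labcd}\in\Dat$, whence $\Kat=\langle\widehat{\Labcd}\rangle\flin\subset\Dat$. The two inclusions combine to $\Kat=\Dat$, which is exactly the identity required by Definition~\ref{D.NCversion2}.

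The conceptual content, and the only place where anything genuine must be checked, is that the ``error terms'' separating $\widehat{\Labcd}$ from $\T_{(4)}\fourpart$ in~\eqref{eq.abcdT} are precisely the even partitions already generated by $\pairpart$ and by the rotation of $-\crosspart+2\Paaaa$; note in particular that the crossing pair $\Labab$ is \emph{not} a rotation of the non-crossing pairs $\Laabb,\Labba$, so the generator $-\crosspart+2\Paaaa$ is genuinely needed to reach it. I expect the main (though modest) obstacle to be the rotation bookkeeping: confirming that $\pairpart\otimes\pairpart$ and its rotation give $\Laabb$ and $\Labba$, and that the planar rotation of $-\crosspart+2\Paaaa$ into $\Part(0,4)$ is $2\fourpart-\Labab$ with the crossing pair rather than a non-crossing one. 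Once these identifications are pinned down, the equality of categories is a one-line consequence of~\eqref{eq.abcdT}.
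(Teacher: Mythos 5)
Your proof is correct and follows essentially the same route as the paper: the paper likewise observes that $\langle\T_{(4)}\fourpart\rangle\flin$ is obviously non-crossing and then reads off from Eq.~\eqref{eq.abcdT} that $\langle\T_{(4)}\fourpart,-\crosspart+2\connecterpart\rangle\flin=\langle\widehat{\Labcd},-\Labab+2\Laaaa\rangle\flin=\langle\widehat{\Labcd}\rangle\flin$, using (as you do) that $-\crosspart+2\Paaaa$ already lies in $\langle\widehat{\Labcd}\rangle\flin$. Your write-up merely makes explicit the rotation bookkeeping ($\Laabb$, $\Labba$, and $\Lrot^2(-\crosspart+2\Paaaa)=2\fourpart-\Labab$) that the paper leaves implicit.
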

\begin{proof}
Obviously is $\langle\T_{(4)}\fourpart\rangle\flin$ noncrossing. From Eq.\ \eqref{eq.abcdT}, we see that
\begin{equation}
\label{eq.Tcross}
\langle\T_{(4)}\fourpart,-\crosspart+2\connecterpart\rangle\flin=\langle\widehat{\Labcd},-\Labab+2\Laaaa\rangle\flin=\langle\widehat{\Labcd}\rangle\flin.
\end{equation}
\end{proof}

\begin{defn}
We denote $S^{-1}O^+_4:=T_{\tau_{(4)}}^{\otimes 4}H_4^+T_{\tau_{(4)}}^{\otimes 4}$ the quantum group corresponding to the category $\langle\T_{(4)}\fourpart\rangle\flin$ and call it the \emph{free $(-1)$-special orthogonal group}.
\end{defn}

\begin{thm}
\label{T.SOfree}
We have
$$SO_4^{-1}\subsetneq S^{-1}O_4^+\subsetneq O_4^+.$$
The subgroup $SO_4^{-1}$ is obtained from $S^{-1}O_4^+$ imposing $(-1)$-commutativity on the entries of the fundamental representation.
\end{thm}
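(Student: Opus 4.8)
The plan is to transport the whole statement to the level of linear categories of partitions through Tannaka--Krein duality and Proposition~\ref{P.subgrp}, using that $O_4^+$, $S^{-1}O_4^+$ and $SO_4^{-1}$ correspond respectively to $\langle\rangle\flin$, $\langle\T_{(4)}\fourpart\rangle\flin$ and $\langle\widehat{\Labcd}\rangle\flin$. Since Proposition~\ref{P.subgrp} reverses inclusions, the chain $SO_4^{-1}\subset S^{-1}O_4^+\subset O_4^+$ corresponds to $\langle\rangle\flin\subset\langle\T_{(4)}\fourpart\rangle\flin\subset\langle\widehat{\Labcd}\rangle\flin$, and each strictness will be extracted from a category difference visible in a morphism space $\Cat(k,l)$ with $k+l\le 4$ via Lemma~\ref{L.strict}.

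For the outer inclusion I note that $\langle\rangle\flin$ is contained in every linear category of partitions (the identity and pair partition generating it belong to all of them by definition), so $\langle\rangle\flin\subset\langle\T_{(4)}\fourpart\rangle\flin$ gives $S^{-1}O_4^+\subset O_4^+$. For strictness I would exploit the category isomorphism $\T_{(4)}$: since $\tau_{(4)}$ is self-adjoint and satisfies $\tau_{(4)}\cdot\tau_{(4)}=\idpart$, the map $T_{\tau_{(4)}}$ is a real symmetric involution, hence orthogonal, and conjugation by it preserves $O_4^+$; therefore $\T_{(4)}$ fixes $\langle\rangle\flin$ and sends $\langle\fourpart\rangle\flin$ to $\langle\T_{(4)}\fourpart\rangle\flin$. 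Because $\fourpart\in\langle\fourpart\rangle\flin(0,4)$ while $\fourpart\notin\langle\rangle\flin(0,4)$, bijectivity of $\T_{(4)}$ yields $\T_{(4)}\fourpart\notin\langle\rangle\flin(0,4)$, so the two categories already differ in degree $0+4\le4$ and Lemma~\ref{L.strict} gives $S^{-1}O_4^+\subsetneq O_4^+$; this is just the transport under $\T_{(4)}$ of the strict inclusion $H_4^+\subsetneq O_4^+$.

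For the inner inclusion I would first prove the second sentence of the theorem. Imposing $(-1)$-commutativity on the entries of $u$ adjoins to the defining relations of $S^{-1}O_4^+$ exactly the intertwiner $T_{-\crosspart+2\connecterpart}$ (Section~\ref{secc.qcom}), so the resulting quantum group has representation category $\langle\T_{(4)}\fourpart,-\crosspart+2\connecterpart\rangle\flin$, which by Equation~\eqref{eq.Tcross} equals $\langle\widehat{\Labcd}\rangle\flin$, the category of $SO_4^{-1}$. This both shows that $SO_4^{-1}$ is the $(-1)$-commutative version of $S^{-1}O_4^+$ and gives $\langle\widehat{\Labcd}\rangle\flin\supset\langle\T_{(4)}\fourpart\rangle\flin$, whence $SO_4^{-1}\subset S^{-1}O_4^+$ by Proposition~\ref{P.subgrp}. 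For strictness I would again test a single degree-$4$ space: the element $-\crosspart+2\connecterpart$ lies in $\langle\widehat{\Labcd}\rangle\flin(2,2)$, but it cannot lie in $\langle\T_{(4)}\fourpart\rangle\flin(2,2)$, since the latter category is non-crossing by Proposition~\ref{P.SONC} whereas $-\crosspart+2\connecterpart$ carries a nonzero coefficient on the crossing partition $\crosspart$. As $2+2\le4$, Lemma~\ref{L.strict} forces $SO_4^{-1}\neq S^{-1}O_4^+$, completing the chain.

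The delicate point throughout, and the reason the whole argument is confined to $N=4$, is the passage from a difference of partition categories to a difference of quantum groups: because $p\mapsto T_p$ is not injective, such a passage is licensed only by Lemma~\ref{L.strict}, that is, only when the witnessing morphism sits in a space $\Cat(k,l)$ with $k+l\le N$. Both of my witnesses, $\fourpart\in\Part\flin(0,4)$ and $-\crosspart+2\connecterpart\in\Part\flin(2,2)$, have length exactly $4=N$, sitting right at the boundary of the injectivity range of \cite{GWintsp}; I would therefore make sure that no strictness claim is ever placed in a higher degree, and that the identification of $\langle\widehat{\Labcd}\rangle\flin$ afforded by Equation~\eqref{eq.abcdT} is invoked only at this admissible degree.
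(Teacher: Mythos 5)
Your proof is correct and follows essentially the same route as the paper: the same chain of category inclusions $\langle\widehat{\Labcd}\rangle\flin\supsetneq\langle\T_{(4)}\fourpart\rangle\flin\supsetneq\langle\rangle\flin$, the same appeal to Equation~\eqref{eq.Tcross}, Proposition~\ref{P.subgrp} and Lemma~\ref{L.strict}, and the same identification of the $(-1)$-commutative subgroup via Proposition~\ref{P.SONC}. You merely make explicit what the paper calls obvious, namely the degree-$4$ witnesses ($\T_{(4)}\fourpart$ at $(0,4)$ via the isomorphism $\T_{(4)}$, and $-\crosspart+2\connecterpart$ at $(2,2)$), which is a harmless and correct elaboration.
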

\begin{proof}
The statement follows from the partition picture. For the associated categories of partitions, we surely have
$$\langle\widehat{\Labcd},-\crosspart+2\connecterpart\rangle\flin\supsetneq\langle\T_{(4)}\fourpart\rangle\flin\supsetneq\langle\rangle\flin.$$
The right inclusion including the strictness is obvious. The left one follows from Eq.~\eqref{eq.Tcross}. It is strict since the second category is non-crossing while the first one is not. The inclusions for the categories imply corresponding inclusions for the quantum groups in the sense of Proposition~\ref{P.subgrp}. The inclusions indeed carry over including the strictness thanks to Lemma~\ref{L.strict} -- we work with $N=4$ and all the partitions are defined on $k+l=4$ points. The last assertion follows from Proposition~\ref{P.SONC}.
\end{proof}

\begin{rem}
Let us state two obvious remarks following directly from the definition of $S^{-1}O_4^+$. First, the quantum group is similar (and hence isomorphic) to the free hyperoctahedral group $H_4^+$. Secondly, the associated category $\Cat_{S^{-1}O_4^+}$ is isomorphic to the category $\Cat_{H_4^+}$.
\end{rem}

\subsection{Free version of Coxeter group $D_4$}
In this section, we are going to present the first main result of this article -- the definition of the free quantum analogue for the demihyperoctahedral group of rank four.

\begin{prop}
\label{P.DNC}
The category $\langle\T_{(4)}\fourpart,\fourpart\rangle\flin$ is a~non-crossing version of the category $\langle\widehat{\Labcd},\fourpart\rangle\flin$ in the sense of both Def.~\ref{D.NCversion} and Def~\ref{D.NCversion2}.
\end{prop}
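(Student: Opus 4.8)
The plan is to derive both assertions from the identity \eqref{eq.Tcross} already obtained in the proof of Proposition~\ref{P.SONC}, simply by adjoining $\fourpart$ as a~common extra generator. The structural point I would isolate first is that the presence of $\fourpart$ is exactly what forces the two a~priori different notions of a~non-crossing version (Definitions~\ref{D.NCversion} and~\ref{D.NCversion2}) to agree here, even though, as the remark after Definition~\ref{D.NCversion2} stresses, they need not agree in general.

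Two elementary observations do the work. First, for generating sets $S,S',D$ with $\langle S\rangle\flin=\langle S'\rangle\flin$ one has $\langle S,D\rangle\flin=\langle S',D\rangle\flin$, since both categories contain $S\cup S'\cup D$ (using that $S\subseteq\langle S,D\rangle\flin$ gives $\langle S'\rangle\flin=\langle S\rangle\flin\subseteq\langle S,D\rangle\flin$, and symmetrically), hence both coincide with $\langle S\cup S'\cup D\rangle\flin$. Second, whenever a~category $\Cat$ contains the four-block $\Paaaa$ --- in particular whenever $\fourpart\in\Cat$, since then $\Paaaa\in\langle\fourpart\rangle\flin\subseteq\Cat$ --- the two crossing elements are interchangeable as generators:
\[
\langle\Cat,\crosspart\rangle\flin=\langle\Cat,-\crosspart+2\Paaaa\rangle\flin,
\]
because $-\crosspart+2\Paaaa=2\Paaaa-\crosspart$ and $\crosspart=2\Paaaa-(-\crosspart+2\Paaaa)$ exhibit each of $\crosspart$ and $-\crosspart+2\Paaaa$ as a~linear combination of the other together with $\Paaaa\in\Cat$.

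With these in hand the deduction is short. The category $\langle\T_{(4)}\fourpart,\fourpart\rangle\flin$ is non-crossing, since $\fourpart$ is non-crossing and $\T_{(4)}\fourpart$ is built from the non-crossing partitions $\fourpart$, $\idpart$, $\disconnecterpart$ by tensor product and composition. For Definition~\ref{D.NCversion2} I would apply the first observation to the identity $\langle\T_{(4)}\fourpart,-\crosspart+2\Paaaa\rangle\flin=\langle\widehat{\Labcd}\rangle\flin$ of~\eqref{eq.Tcross}, with $D=\fourpart$, obtaining
\[
\langle\T_{(4)}\fourpart,\fourpart,-\crosspart+2\Paaaa\rangle\flin=\langle\widehat{\Labcd},\fourpart\rangle\flin,
\]
which is exactly the required condition. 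For Definition~\ref{D.NCversion} I would then apply the second observation with $\Cat=\langle\T_{(4)}\fourpart,\fourpart\rangle\flin$ (which contains $\Paaaa$), replacing $\crosspart$ by $-\crosspart+2\Paaaa$ in the generating set and thereby reducing to the case just settled.

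It remains to check that $\Kat=\langle\widehat{\Labcd},\fourpart\rangle\flin$ satisfies the hypotheses of both definitions, i.e.\ contains both crossing elements: $-\crosspart+2\Paaaa\in\Kat$ is recorded in the example identifying this category with $D_4$, and then $\crosspart=2\Paaaa-(-\crosspart+2\Paaaa)\in\Kat$ since $\Paaaa\in\langle\fourpart\rangle\flin\subseteq\Kat$. I expect no serious obstacle beyond this bookkeeping; the one point that genuinely deserves emphasis is the second observation, for it is precisely the membership $\fourpart\in\langle\T_{(4)}\fourpart,\fourpart\rangle\flin$ that reconciles Definitions~\ref{D.NCversion} and~\ref{D.NCversion2}. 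No new computation beyond \eqref{eq.abcdT} enters.
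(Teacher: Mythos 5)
Your proof is correct and follows essentially the same route as the paper's: both rest on the identity \eqref{eq.abcdT} (you access it through its consequence \eqref{eq.Tcross}), the paper's argument being a one-line version of yours. The two bookkeeping observations you isolate --- that adjoining a common extra generator preserves equality of generated categories, and that $\crosspart$ and $-\crosspart+2\Paaaa$ are interchangeable as generators once $\Paaaa$ is present --- are exactly what the paper's ``we can easily see'' leaves implicit.
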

\begin{proof}
Again, using Eq.~\eqref{eq.abcdT}, we can easily see that adding either the partition $\crosspart$ or the linear combination $-\crosspart+2\connecterpart$ to the category $\langle\T_{(4)}\fourpart,\fourpart\rangle\flin$ we obtain $\langle\widehat{\Labcd},\fourpart\rangle\flin$ in both cases.
\end{proof}

\begin{defn}
We denote $D_4^+:=H_4^+\cap S^{-1}O_4^+$ the quantum group corresponding to the category $\langle\T_{(4)}\fourpart,\fourpart\rangle\flin$ and call it the \emph{free demihyperoctehedral quantum group}.
\end{defn}

\begin{thm}
\label{T.D}
We have
$$D_4\subsetneq D_4^+\subsetneq H_4^+.$$
The subgroup $D_4$ is obtained from $D_4^+$ imposing commutativity or $(-1)$-commutativity on the entries of the fundamental representation.
\end{thm}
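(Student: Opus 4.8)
The plan is to mirror the structure of the proof of Theorem~\ref{T.SOfree}, now working with the enlarged categories that include the partition $\fourpart$. Everything reduces to comparing the three relevant linear categories of partitions on $k+l=4$ points and invoking the established dictionary between categories and quantum groups. First I would write down the chain of categories of partitions
$$\langle\widehat{\Labcd},\fourpart\rangle\flin\supsetneq\langle\T_{(4)}\fourpart,\fourpart\rangle\flin\supsetneq\langle\fourpart\rangle\flin,$$
where the leftmost category corresponds to $D_4$, the middle one to $D_4^+$ (by its definition), and the rightmost one to $H_4^+$. The right inclusion is immediate from the definition of $D_4^+$ as $H_4^+\cap S^{-1}O_4^+$, whose category is generated by $\fourpart$ together with $\T_{(4)}\fourpart$; strictness on the right holds because $\langle\T_{(4)}\fourpart,\fourpart\rangle\flin$ properly contains $\langle\fourpart\rangle\flin$, the extra generator $\T_{(4)}\fourpart$ not lying in the category of $H_4^+$. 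The left inclusion and its strictness follow from Proposition~\ref{P.DNC}: adding $\crosspart$ (or $-\crosspart+2\connecterpart$) to the middle category produces the left one, so the middle is strictly smaller, being non-crossing while the left category is not.

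Next I would translate these strict inclusions of partition categories into strict inclusions of quantum groups. By Proposition~\ref{P.subgrp}, the reverse-ordered inclusions of morphism spaces yield $D_4\subset D_4^+\subset H_4^+$. To upgrade these to \emph{strict} inclusions I would appeal to Lemma~\ref{L.strict}: since we work with $N=4$ and every partition in play is supported on $k+l=4\le N$ points, the functor $p\mapsto T_p$ acts injectively on the relevant morphism spaces, so distinct categories of partitions give genuinely distinct quantum groups. This is exactly the device that made Theorem~\ref{T.SOfree} work, and the hypothesis $k+l\le N$ is satisfied here precisely because $N=4$ matches the length of $\widehat{\Labcd}$ and $\fourpart$.

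For the final assertion about recovering $D_4$, I would use Proposition~\ref{P.DNC} directly. Imposing commutativity amounts to adjoining $\crosspart$ to $\Cat_{D_4^+}$, while imposing $(-1)$-commutativity amounts to adjoining $-\crosspart+2\connecterpart$; Proposition~\ref{P.DNC} asserts that in \emph{both} cases the resulting category is $\langle\widehat{\Labcd},\fourpart\rangle\flin$, which is the category of $D_4$. Thus either deformation relation collapses $D_4^+$ onto the classical Coxeter group $D_4$, as claimed.

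The bookkeeping is routine; the only genuine content I expect to lean on is the non-crossing-version identity of Proposition~\ref{P.DNC} (itself a consequence of the key algebraic relation~\eqref{eq.abcdT}), together with the injectivity range $k+l\le N$ in Lemma~\ref{L.strict}. The main obstacle, if any, is making sure strictness on the \emph{right} is justified carefully: one must confirm that $\T_{(4)}\fourpart\notin\langle\fourpart\rangle\flin=\Cat_{H_4^+}$, equivalently that the corresponding intertwiner $T_{\T_{(4)}\fourpart}$ is a genuinely new morphism for $H_4^+$; since $\T_{(4)}$ is a nontrivial category isomorphism conjugating $H_4^+$ to $S^{-1}O_4^+$ and these are not equal as quantum groups, this separation holds, but it is the one step worth stating explicitly rather than waving through.
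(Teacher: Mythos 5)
Your argument follows the paper's proof almost verbatim: the same chain of partition categories, the same appeal to Proposition~\ref{P.subgrp} and Lemma~\ref{L.strict} to transfer strict inclusions of categories (all supported on $k+l=4\le N$ points) to strict inclusions of quantum groups, the same crossing-versus-non-crossing argument for strictness of the left inclusion, and Proposition~\ref{P.DNC} for the final assertion. The one place you deviate is precisely the step you flag as delicate, and there your justification does not work. To get strictness of $\langle\T_{(4)}\fourpart,\fourpart\rangle\flin\supsetneq\langle\fourpart\rangle\flin$ you argue that $\T_{(4)}\fourpart\notin\langle\fourpart\rangle\flin$ because $\T_{(4)}$ conjugates $H_4^+$ to $S^{-1}O_4^+$ ``and these are not equal as quantum groups.'' But the inequality $H_4^+\neq S^{-1}O_4^+$ (as quantum subgroups of $O_4^+$, not merely up to isomorphism) is nowhere established --- Theorem~\ref{T.SOfree} only separates $S^{-1}O_4^+$ from $SO_4^{-1}$ and from $O_4^+$ --- and the natural way to prove it would be to exhibit an intertwiner of one that is not an intertwiner of the other, i.e.\ to show $T_{\T_{(4)}\fourpart}\notin\Cat_{H_4^+}$, which by the injectivity statement of Lemma~\ref{L.strict} is equivalent to the very category statement you are trying to establish. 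As written, this step is circular.

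The fix is elementary and is exactly what the paper does: $\langle\fourpart\rangle\flin$ is spanned by partitions all of whose blocks have even size, whereas expanding $\T_{(4)}\fourpart=\tau_{(4)}^{\otimes 4}\cdot\fourpart$ with $\tau_{(4)}=\idpart-\frac{1}{2}\disconnecterpart$ produces summands containing blocks of odd size (for instance a three-block together with a singleton). Since the partitions form a basis of $\Part\nlin(0,4)$, such a linear combination cannot lie in the span of even-block partitions, so $\T_{(4)}\fourpart\notin\langle\fourpart\rangle\flin$ and the right inclusion is strict. With this replacement your proof coincides with the paper's.
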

\begin{proof}
The proof is the same as in case of Theorem \ref{T.SOfree}. This time, we use the category inclusions
$$\langle\widehat{\Labcd},\fourpart\rangle\flin\supsetneq\langle\T_{(4)}\fourpart,\fourpart\rangle\flin\supsetneq\langle\fourpart\rangle\flin.$$
The right inclusion is obvious, its strictness follows from the fact that $\langle\fourpart\rangle$ contains only partitions with blocks of even length, whereas $\T_{(4)}\fourpart$ contains also odd blocks as summands. The left inclusion again follows from Eq.~\eqref{eq.Tcross}. It is strict since the second category is non-crossing while the first one is not. Again, those inclusions carry over to the corresponding quantum group including the strictness thanks to Proposition~\ref{P.subgrp} and Lemma~\ref{L.strict}. Finally, the last assertion follows from Proposition~\ref{P.DNC}.
\end{proof}

\subsection{Remarks on $N\neq 4$}

As a first remark, let us stress that our considerations are very specific for $N=4$. It is completely unclear, how to define the free analogues for other $N$. See also Question~\ref{Q.PcatN}.

Secondly, we can still ask whether the categories we just defined are interesting also for $N\neq 4$. (Although we should definitely not interpret them as free analogues of $D_N$ or $SO_N^{-1}$.) Unfortunately, we have the following.

\begin{lem}
Assume $N\neq 2,4$. Then
$$\langle\T_{(N)}\fourpart,\fourpart\rangle\nlin=\langle\fourpart,\singleton\otimes\singleton\rangle\nlin=\{\text{all NC partitions of even length}\}.$$
\end{lem}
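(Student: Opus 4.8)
The plan is to prove the two stated equalities separately. Write $E$ for the span of all non-crossing partitions of even length; note that $E$ is indeed a category, since the category operations preserve non-crossingness and change the length of a diagram only by even amounts. Throughout I use the cited fact that $\langle\fourpart\rangle\nlin$ is the span of all non-crossing partitions all of whose blocks have even size, and I read $\fourpart=\Laaaa\in\Part(0,4)$, recovering everything else by rotation. Two observations organize the whole argument: first, a non-crossing partition has an even number of points if and only if it has an even number of odd-size blocks; second, the partition $m_3\in\Part(3,1)$ whose four points form a single block is an even block, hence $m_3\in\langle\fourpart\rangle\nlin$, and composing with $m_3$ is the device that merges a pair of adjacent legs into existing blocks.

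For the first equality $\langle\fourpart,\singleton\otimes\singleton\rangle\nlin=E$, the inclusion $\subseteq$ is immediate since both generators lie in $E$. For $\supseteq$ I would induct on the number of points of a partition $p\in E$ (reduced to $\Part(0,2n)$ by rotation), using that every non-crossing partition has an interval block $B$. If $B$ has even size it lies in $\langle\fourpart\rangle\nlin$ and can be excised/reinserted by rotations and tensoring, reducing the point count. If $B$ has odd size greater than one, I shrink it to a single point using $m_3$-merges (each removing an even chunk), which lowers the point count without changing the number of odd blocks; this reduces to the case where all odd blocks are singletons. Finally a partition whose only odd blocks are singletons has an even number of them, and these are produced by tensoring copies of $\singleton\otimes\singleton$ and then growing them into the required odd blocks by $m_3$-merges with pairs, all inside the category.

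For the second equality I first note $\langle\T_{(N)}\fourpart,\fourpart\rangle\nlin\subseteq E$: expanding $\T_{(N)}\fourpart=\tau_{(N)}^{\otimes4}\fourpart$ with $\tau_{(N)}=\idpart-\tfrac2N\disconnecterpart$ writes it as a linear combination of the four-block, the four ``three-block plus singleton'' diagrams, the six ``pair plus two singletons'' diagrams, and $\Labcd$, all of which lie in $E=\langle\fourpart,\singleton\otimes\singleton\rangle\nlin$ by the first part. For the reverse inclusion it suffices to show $\singleton\otimes\singleton\in\langle\T_{(N)}\fourpart,\fourpart\rangle\nlin$ when $N\neq2,4$. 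The key computation composes $\T_{(N)}\fourpart\in\Part(0,4)$ with $m_3\otimes\idpart\in\Part(4,2)$ (legal since $m_3\in\langle\fourpart\rangle\nlin$), landing in $\Part(0,2)=\spanlin\{\pairpart,\singleton\otimes\singleton\}$; tracking the image of each term gives
$$(m_3\otimes\idpart)\,\T_{(N)}\fourpart=\gamma_N\,\pairpart-\tfrac{2}{N^3}(N-2)(N-4)\,\singleton\otimes\singleton$$
for a scalar $\gamma_N$. Since $\pairpart$ already lies in the category, subtracting $\gamma_N\,\pairpart$ isolates a nonzero multiple of $\singleton\otimes\singleton$ precisely when $N\neq2,4$, which yields $\langle\fourpart,\singleton\otimes\singleton\rangle\nlin\subseteq\langle\T_{(N)}\fourpart,\fourpart\rangle\nlin$ and hence equality.

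I expect two places to demand care. The generation step in the first part is the main combinatorial obstacle: the induction must respect non-crossingness when excising blocks and when pairing up singleton blocks, and the bookkeeping for singleton interval blocks (which cannot be shrunk) needs the parity observation to guarantee a partner. The second delicate point is the capping computation itself: the entire dichotomy hinges on the coefficient of $\singleton\otimes\singleton$ factoring as $-\tfrac{2}{N^3}(N-2)(N-4)$, so the contributions of the three-block, pair, and four-singleton terms (including the loop factor $N$ arising when $m_3$ closes a block) must be combined exactly. It is this factorization that explains why $N=4$ (the case of interest for $D_4^+$) and the degenerate $N=2$ are the genuine exceptions.
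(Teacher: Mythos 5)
Your proposal is correct and follows essentially the same route as the paper: your composition $(m_3\otimes\idpart)\cdot\T_{(N)}\fourpart$ is, up to Frobenius rotation, exactly the paper's capping of $\T_{(N)}\fourpart\otimes\fourpart$ by nested pair partitions, and your coefficient $-\tfrac{2}{N^3}(N-2)(N-4)$ of $\singleton\otimes\singleton$ agrees with the paper's $-\tfrac{2}{N}\bigl(1-\tfrac{2}{N}\bigr)\bigl(1-\tfrac{4}{N}\bigr)$. The only divergence is that you sketch an inductive proof of the identity $\langle\fourpart,\singleton\otimes\singleton\rangle\nlin=\{\text{all NC partitions of even length}\}$, which the paper simply cites as known from the classification of non-crossing (easy) categories.
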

\begin{proof}
It is straightforward to compute
\begin{align*}
&\UPartition{0:1,8}{0.2:2,7;0.4:3,6;0.6:4,5}\cdot(\T_{(N)}\fourpart\otimes\fourpart)\\
&\qquad=\left(1-\frac{6}{N}+\frac{12}{N^2}\right)\pairpart-\frac{2}{N}\left(1-\frac{2}{N}\right)\left(1-\frac{4}{N}\right)\singleton\otimes\singleton.
\end{align*}
It is known that $\langle\fourpart,\singleton\otimes\singleton\rangle\nlin$ is the category spanned by all non-crossing partitions of even length. In particular, $\T_{(N)}\fourpart\in\langle\fourpart,\singleton\otimes\singleton\rangle\nlin$.
\end{proof}

On the other hand, the category $\langle\T_{(N)}\fourpart\rangle\nlin$ makes perfect sense and, as we already mentioned, corresponds to the quantum group $T_{\tau_{(N)}}^{\otimes N}H_N^+T_{\tau_{(N)}}^{\otimes N}$, which is similar (and hence isomorphic) to $H_N^+$.

Finally, one might want to study the category $\Kat:=\langle\widehat{\singleton^{\otimes k}}\rangle\nlin$ for general $k$ and $N$. But we obtain nothing new except for the above discussed case $k=N$. For $k>N$, we have $\widehat{\singleton^{\otimes k}}=0$, so $\Kat=\langle\rangle\nlin$. For $k=1$, we have $\Kat=\langle\singleton\rangle$, for $k=2$, we have $\Kat=\langle\singleton\otimes\singleton\rangle\nlin$. One can show that for $2<k<N$, we have $\Kat=\langle\fourpart,\singleton\rangle\nlin=\{\text{all NC partitions}\}$ if $k$ is odd and $\Kat=\langle\fourpart,\singleton\otimes\singleton\rangle\nlin=\{\text{all NC partitions of even length}\}$ if $k$ is even.

\section{The representation category of $D_4^+$ as a~graph category}
\label{sec.graph}

%\subsection{Plan for the rest of the article}
%\label{secc.plan}
Let us now informally summarize the goals and strategy for the next sections. Our aim is to study the representation category associated to the new free demihyperoctahedral quantum group $D_4^+$. Namely the most basic question reads:

\begin{quest}
Find an explicit description of the representation category $\Cat_{D_4^+}$.
\end{quest}

First, notice that we indeed do not have such a~description yet. Although we defined $D_4^+$ through its associated linear category of partitions, we know only the generators of the category, not all its elements explicitly. In contrast with the ``easy'' quantum groups, where we know, for instance, that $\langle\fourpart\rangle$ contains all non-crossing partitions with even blocks and hence we can explicitly write down how the associated representation category of $H_N^+$ looks like. In case of $D_4^+$, due to the fact that we are using linear combinations of partitions, it is not so straightforward to see, what are the explicit elements of our category.

A naive idea to describe all elements of the category $\Kat:=\langle\T_{(4)}\fourpart,\fourpart\rangle\flin$ is to notice that the category is linearly spanned by elements that were made from those two generators by finite amount of the category operations -- tensor product, composition, and involution. We can then model these operations by some graphs and these graphs would then correspond to elements of this category. To be more concrete, let us denote the four-block $\fourpart$ by a black point $\bcol$ (more precisely, by the diagram $\Gfour$) and the linear combination $\T_{(4)}\fourpart$ by a white point $\wcol$ (more precisely, by the diagram $\GWfour$). Let us now ignore the distinction between thin and thick lines for a moment. Now, we can construct more complicated diagrams that stand for some other elements of the category such as
$$
\Graph{
\GS 0.3/0.5:1/2.5,2/2.5,3/2.5;
\GS 0.3/0:1/,2/,3/;
\GV 0.5:2.5;
\GE 0.3/0.5:4/4.5,5/4.5,6/4.5;
\GE 0.3/0:4/,5/,6/;
\GW 0.5:4.5;
\GE 0.5/0.5:2.5/4.5;
}
=\UPartition{0:1,2,3,6,7,8}{0.6:4,5}\cdot(\fourpart\otimes\T_{(4)}\fourpart).
$$

In the end, every element of $\Kat$ can be expressed as a linear combination of such diagrams. Before discussing how much sensible and useful this approach is, let us note that there is also another way how to construct and interpret such diagrams with the same result: We forget about the distinction between black and white points -- both will now stand for the fourblock $\fourpart$ -- but we are going to distinguish between thick and thin lines -- thin lines will be simply identities, but the thick ones will stand for the partition $\tau_{(4)}$. 

This latter approach fits perfectly to the framework of \emph{graph categories} introduced recently in \cite{MR19}. We formalize this approach in Section~\ref{sec.graph}.

Nevertheless, one might object that such naive descriptions brings us basically nothing -- we are still not able to explicitly say what are the elements of the morphism spaces $\Cat_{D_4^+}(k,l)$ or $\Kat(k,l)$ since there are infinite amount of graphs/diagrams we would have to go through to determine this for every given $k,l$. In other words, there is an infinite amount of ways how to combine the generators together. Of course that if we iterate the category operations ad infinitum, we eventually obtain all elements of $\Kat(k,l)$ for some given $k,l$, but this is not practically possible. We have to find, for every $k,l\in\N_0$, a \emph{finite} amount of graphs (of manipulations with the generators) that already span the morphism space $\Kat(k,l)$. We solve this task in Section~\ref{sec.finite}.

\subsection{Graph categories}
\label{secc.graphcat}

As another example of monoidal involutive categories, we mention graph categories defined in \cite{MR19}. Actually, we generalize the approach of \cite{MR19} slightly.

\begin{defn}
A \emph{bilabelled graph} $\KG$ is a~triple $(K,{\bf a},{\bf b})$, where $K$ is a~graph, ${\bf a}=(a_1,\dots,a_k)$, ${\bf b}=(b_1,\dots,b_l)$ are tuples of vertices of $K$.
\end{defn}

The definition works for any type of graph. However, in this article, by a~graph we will always mean an undirected graph with the possibility of loops and multiple edges (unlike \cite{MR19}). We will call $\mathbf{a}$ the tuple of \emph{input vertices} while $\mathbf{b}$ are \emph{output vertices} (the role of $\mathbf{a}$ and $\mathbf{b}$ is switched in comparison with \cite{MR19} to be consistent with the notation for partitions). For any $k,l\in\N_0$ we denote by $\Gat(k,l)$ the set of all bilabelled graphs $\KG=(K,{\bf a},{\bf b})$ with $|{\bf a}|=k$, $|{\bf b}|=l$. The set of all bilabelled graphs is denoted simply by $\Gat$.

We define a~structure of a~(monoidal involutive) category on the set of all bilabelled graphs by introducing some operations. Consider $\KG=(K,{\bf a},{\bf b})$, $\HG=(H,{\bf c},{\bf d})$. We define

\begin{itemize}
\item \emph{tensor product} $\KG\otimes \HG=(K\sqcup H,{\bf ac},{\bf bd})$,
\item \emph{composition} (only defined if $|{\bf b}|=|{\bf c}|$) ${\bf H\cdot K}=(H\cdot K,{\bf a},{\bf d})$, where $H\cdot K$ is a~graph that is created from $H\sqcup K$ by contracting the vertices $b_i$ and $c_i$ for every $i$ (in contrast with \cite{MR19}, we keep the multiple edges),
\item \emph{involution} $\KG^*=(K,{\bf b},{\bf a})$.
\end{itemize}

We denote $\MG^{k,l}:=(M,v^k,v^l)$, where $M$ is a~graph with a~single vertex $v$. We denote by $\nullG$ the bilabelled graph corresponding to the \emph{null graph} with no vertices.

Any collection of bilabelled graphs $\Cat$ containing $\MG^{1,1}$ (playing the role of identity), $\MG^{0,2}$ (playing the role of the duality morphism), $\nullG$ (playing the role of the scalar identity) and closed under the above defined operations forms a~category called a~\emph{graph category}.

\begin{defn}
Let $A\in M_N(\R)$ be a~symmetric matrix, let $\KG=(K,{\bf a},{\bf b})$ be a~bilabelled graph and denote by $k$ and $l$ the length of ${\bf a}$ and ${\bf b}$. Then we define a~linear map $T^A_\KG\colon(\C^N)^{\otimes l}\to (\C^N)^{\otimes k}$ as
$$[T^A_\KG]_{\bf ij}:=\sum_{\substack{\phi\colon V(K)\to [N]\\\phi({\bf a})={\bf j},\;\phi({\bf b})={\bf i}}}\langle\phi\rangle_A,$$
where
$$\langle\phi\rangle_A:=\prod_{\{i,j\}\in E(K)}A_{\phi(i)\phi(j)}.$$
\end{defn}

\begin{prop}
The map $T^A$ is a~monoidal unitary functor. That is,
$$T^A_{\KG\otimes\HG}=T^A_\KG\otimes T^A_\HG,\quad T^A_{\HG\cdot\KG}=T^A_\HG T^A_\KG,\quad T^A_{\KG^*}=T^{A *}_\KG.$$
\end{prop}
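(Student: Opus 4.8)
The plan is to verify the three functoriality identities separately, in each case by expanding both sides using the definition of $T^A_\KG$ and matching the resulting sums over vertex-labellings. The key combinatorial fact throughout is that a labelling $\phi\colon V(K)\to[N]$ of a disjoint union or contraction of graphs decomposes in a canonical way, and the weight $\langle\phi\rangle_A$ factors accordingly as a product over edges.

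For the tensor product, I would note that $\KG\otimes\HG=(K\sqcup H,\mathbf{ac},\mathbf{bd})$, so $V(K\sqcup H)=V(K)\sqcup V(H)$ and $E(K\sqcup H)=E(K)\sqcup E(H)$. A labelling $\phi$ of the disjoint union is precisely a pair $(\phi_K,\phi_H)$ of independent labellings, and the boundary constraints split as $\phi(\mathbf{a})=\mathbf{j}_1$, $\phi(\mathbf{c})=\mathbf{j}_2$ on the inputs and $\phi(\mathbf{b})=\mathbf{i}_1$, $\phi(\mathbf{d})=\mathbf{i}_2$ on the outputs. Since $\langle\phi\rangle_A=\langle\phi_K\rangle_A\,\langle\phi_H\rangle_A$ (the edge sets are disjoint), the sum factors and one reads off $[T^A_{\KG\otimes\HG}]_{(\mathbf{i}_1\mathbf{i}_2)(\mathbf{j}_1\mathbf{j}_2)}=[T^A_\KG]_{\mathbf{i}_1\mathbf{j}_1}[T^A_\HG]_{\mathbf{i}_2\mathbf{j}_2}$, which is exactly the matrix entry of $T^A_\KG\otimes T^A_\HG$. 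The involution identity is even simpler: $\KG^*=(K,\mathbf{b},\mathbf{a})$ has the same underlying graph, so $\langle\phi\rangle_A$ is unchanged, and swapping the roles of $\mathbf{a}$ and $\mathbf{b}$ just transposes the index assignment, giving $[T^A_{\KG^*}]_{\mathbf{ji}}=[T^A_\KG]_{\mathbf{ij}}$; since $A$ is real the adjoint coincides with the transpose, so $T^A_{\KG^*}=(T^A_\KG)^*$.

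The composition identity is the substantive step and I expect it to be the main obstacle. Here $\HG\cdot\KG=(H\cdot K,\mathbf{a},\mathbf{d})$, where $H\cdot K$ is formed from $H\sqcup K$ by identifying each $b_i$ with $c_i$. Writing out $[T^A_\HG T^A_\KG]_{\mathbf{id}}=\sum_{\mathbf{m}}[T^A_\HG]_{\mathbf{im}_{?}}\cdots$ one sees that the matrix product introduces a sum over an intermediate index tuple $\mathbf{m}\in[N]^{|\mathbf{b}|}$ running over the labels at the glued vertices. The crux is a bijection: a labelling $\psi$ of the contracted graph $H\cdot K$ corresponds bijectively to a pair $(\phi_K,\phi_H)$ of labellings of $K$ and $H$ that \emph{agree} on the identified vertices, i.e.\ $\phi_K(b_i)=\phi_H(c_i)=m_i$ for all $i$, and this common value is exactly the summation index $\mathbf{m}$ in the matrix product. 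Because we retain multiple edges under contraction, the edge set of $H\cdot K$ is $E(K)\sqcup E(H)$ with endpoints relabelled, so $\langle\psi\rangle_A=\langle\phi_K\rangle_A\,\langle\phi_H\rangle_A$ again factors cleanly; no self-loops or merged edges are silently dropped, which is precisely why keeping multiplicities (unlike \cite{MR19}) makes the weight multiplicative. Summing over $\mathbf{m}$ to release the agreement constraint then reproduces the matrix product, establishing $T^A_{\HG\cdot\KG}=T^A_\HG T^A_\KG$.

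I would also remark briefly that one need not separately check compatibility with the distinguished graphs $\MG^{1,1}$, $\MG^{0,2}$, $\nullG$, since functoriality on the operations is the full content of the proposition; but it is worth observing as a sanity check that $T^A_{\MG^{1,1}}=\id$ (a single vertex with one input and one output label forces $i=j$) and $T^A_{\nullG}=1$ (empty vertex set gives an empty product), consistent with their intended roles. The only genuine care required is bookkeeping in the contraction bijection, making sure the boundary tuples $\mathbf{b}$ of $K$ and $\mathbf{c}$ of $H$ are matched index-by-index and that the retained multi-edges are accounted for in $\langle\psi\rangle_A$.
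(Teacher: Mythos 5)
Your proposal is correct and follows essentially the same route as the paper: the paper likewise treats the tensor product and involution as routine and writes out only the composition, using exactly your bijection between labellings of $H\cdot K$ and pairs of labellings of $H$ and $K$ agreeing on the identified vertices, with the intermediate matrix index playing the role of the common value. Your added observations about retained multi-edges and the sanity checks for $\MG^{1,1}$ and $\nullG$ are consistent with, though not required by, the paper's argument.
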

\begin{proof}
The proof in all cases is straightforward. As an example, let us do the computation for the most complicated part, which is the composition.
\begin{align*}
[T^A_\HG T^A_\KG]_{\bf ij}&=\sum_{\bf k} [T^A_\HG]_{\bf ik}[T^A_{\KG}]_{\bf kj}=\sum_{\bf k}\sum_{\substack{\phi\colon V(H)\to[N]\\\phi({\bf c})={\bf k},\;\phi({\bf d})={\bf i}}}\sum_{\substack{\psi\colon V(K)\to[N]\\\psi({\bf a})={\bf j},\;\psi({\bf b})={\bf k}}}\langle\phi\rangle_A\langle\psi\rangle_A=\\
&=\sum_{\substack{\omega\colon V(H\sqcup K)\to[N]\\\omega({\bf a})={\bf j},\;\omega({\bf d})={\bf i}\\\omega({\bf b})=\omega({\bf c})}}\langle\omega|_{H}\rangle_A\langle\omega|_{K}\rangle_A=\sum_{\substack{\omega\colon V(H\cdot K)\to[N]\\\omega({\bf a})={\bf i},\;\omega({\bf d})={\bf j}}}\langle\omega\rangle_A=[T^A_{\HG\KG}]_{\bf ij}
\end{align*}
\end{proof}

\begin{rem}
The category of all partitions essentially embeds into the category of all graphs as $p\mapsto \KG_p:=(K_p,{\bf i},{\bf j})$, where $K_p$ is an edgeless graph with vertices corresponding to blocks of $p$, the tuples ${\bf i}$ and ${\bf j}$ describe the blocks of upper resp.\ lower points of $p$ \cite[Def.~6.13]{MR19}. See the examples below.

By \emph{essentially embeds}, we mean in particular that $T_p=T_{\KG_p}^A$ for all partitions $p$ and for any matrix $A$ with $\Tr A=N$. In order to make the mapping $p\mapsto\KG_p$ a true embedding, we would have to slightly modify either the definition of graph categories (by introducing a linear structure and imposing the relation $\MG^{0,0}=N\nullG$, cf.\ Sect.~\ref{sec.Ccat}; alternatively, by not considering the linear structure on partitions and allowing empty parts, see \cite[pp. 11--12]{MR19} or \cite{Ggrpgat}).
\end{rem}

We will denote bilabelled graphs pictorially. Unlike \cite{MR19}, we will draw them ``top to bottom'' instead of ``right to left''. Vertices will be denoted as black points and edges as thick lines. As in case of partitions, the whole graph should be drawn in a~strip between two horizontal lines. Input vertices are marked by drawing a~thin string from the top line to the corresponding vertex. Output vertices are connected by strings to the bottom line. The left-right order in which the strings are attached to the horizontal lines should correspond to the order of the tuples $\mathbf{a}$ and $\mathbf{b}$.

Let us mention a couple of examples. First, the edgeless graphs associated to partitions.

$$\MG^{1,1}=\KG_{\idpart}=\Gid,\quad \MG^{0,2}=\KG_{\pairpart}=\Gpair,$$
$$\MG^{2,2}=\KG_{\connecterpart}=\Gconnecter,\quad \MG^{0,4}=\KG_{\fourpart}=\Gfour,\quad \KG_{\crosspart}=\Gcross$$

Now, let us make some more complicated graphs and also illustrate how the operations work. Similarly to partitions, we can describe the operations as manipulations with the pictures. Tensor product is simply putting ``side by side'' while composition is putting ``one above the other'' and then contracting all the thin lines. Any bilabelled graph can be constructed using those operations starting with the edgeless partition graphs and the graph containing two points connected by an edge~$\GA$.
$$(\GA\otimes\GA)\cdot\Gconnecter\cdot(\GA\otimes\GA)=
\Graph{
\GE 1.25/1.75:1/,2/;
\GS 1/1.25:1/,2/;
\GS 1.75/2:1/,2/;
\GS 0.7/0.5:1/1.5,2/1.5;
\GS 0.3/0.5:1/1.5,2/1.5;
\GS 0.7/1:1/,2/;
\GS 0.3/0:1/,2/;
\GE -0.75/-0.25:1/,2/;
\GS -1/-0.75:1/,2/;
\GS -0.25/0:1/,2/;
\GV 1.25:1,2;
\GV 1.75:1,2;
\GV 0.5:1.5;
\GV -0.75:1,2;
\GV -0.25:1,2;
}
=\GEconnecter
$$
$$\Gmerge\cdot(\GA\otimes\GA)\cdot\Gfork=
\Graph{
\GS 1.3/1.5:1/1.5,2/1.5;
\GS 1.5/2:1.5/;
\GS 1.3/1:1/,2/;
\GE 0.25/0.75:1/,2/;
\GS 0/0.25:1/,2/;
\GS 0.75/1:1/,2/;
\GS -0.3/-0.5:1/1.5,2/1.5;
\GS -0.3/0:1/,2/;
\GS -0.5/-1:1.5/;
\GV 1.5:1.5;
\GV 0.25:1,2;
\GV 0.75:1,2;
\GV -0.5:1.5;
}
=\GAA
$$

In contrast with the thin strings, we draw the graph edges by thick lines.

As we already mentioned, we have $T^A_{\KG_p}=T_p$ for any partition $p$. In general, the idea of the $T^A_\KG$ map is similar as with partitions. The vertices correspond to Kronecker deltas, the thin strings correspond to identity maps and the thick edges correspond to the map $A$. In particular, we have
$$T^A_{\GA}=A,\qquad T^A_{\GAA}=A\bullet A,$$
where $\bullet$ denotes the entrywise {\em Schur product}, so $[A\bullet A]_{ij}=A_{ij}^2$.

\begin{rem}
In the original work \cite{MR19}, the matrix $A$ is considered to be an adjacency matrix of some graph $\Gamma$. In such a~case, it is indeed possible to ignore the multiplicity of the edges since we have $A_{ij}^2=A_{ij}$.
\end{rem}

\begin{rem}
Also here we can apply the Frobenius reciprocity. Categories of graphs are closed under rotations (and their inverses) defined as
\begin{align*}
\Lrot\KG&=(K,(a_2,\dots,a_k),(a_1,b_1,\dots,b_l)),\\
\Rrot\KG&=(K,(a_1,\dots,a_k,b_l),(b_1,\dots,b_{l-1}))
\end{align*}
for any $\KG=(K,\mathbf{a},\mathbf{b})\in\Gat(k,l)$. Hence, it is not that important which of the vertices are inputs and which are outputs. However, their cyclic order $a_k,\dots,a_1$, $b_1,\dots,b_l$ is important. This means that when drawing the bilabelled graphs, it is not that important to draw all the input string pointing up from the vertex and the output vertices pointing down. But it is important that all the strings point somehow outside the region, where the graph is drawn, so it is clear in which (say counter-clockwise) direction they appear.
\end{rem}

\begin{rem}
\label{R.fin}
Let us mention here the biggest disadvantage of describing a~representation category as a~graph category in comparison with categories of partitions. Let $\Cat$ be a~graph category. Then the morphism spaces $\Cat(k,l)$ are typically infinite for all $k,l\in\N_0$. So, even if we know all the elements of~$\Cat$ more or less explicitly, it might still be very hard to determine, how exactly the associated intertwiner spaces $\Mor(u^{\otimes k},u^{\otimes l})=\spanlin\{T^A_\KG\mid \KG\in\Cat(k,l)\}$ look like.

Therefore, given a~graph category $\Cat$ and a~symmetric matrix $A$ interpreting $\Cat$, we may be interested in finding a~collection of {\em finite} subsets $C(k,l)\subset\Cat(k,l)$ such that $T^AC(k,l)=T^A\Cat(k,l)$. We may even ask that $C(k,l)$ is the smallest possible collection. That is, to ask for $T^AC(k,l)$ being linearly independent.
\end{rem}

\subsection{Replacing edges by $\tau$}
Now our goal is to express the representation category $\Cat_{D_4^+}$ as an image of a~graph category by the functor $T^A$. To do that, we need to make a convenient choice for the matrix $A$. As we already mentioned, we basically need to be able to replace the thick edges by the element $\tau_{(4)}$.

\begin{prop}
Consider any $\pi\in\Part\nlin(1,1)$. Then there is a~monoidal unitary functor $\F_\pi\colon\Gat\to\Part\nlin$ mapping $\KG_p\mapsto p$ for every $p\in\Part$ and $\GA\mapsto\pi$. Moreover, we have $T^{T_\pi}_\KG=T_{\F_\pi\KG}$ for every $\KG\in\Gat$.
\end{prop}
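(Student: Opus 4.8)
The plan is to construct the functor $\F_\pi$ explicitly on generators and then verify functoriality by reducing everything to the already-established functoriality of $T^A$. The key observation is that we already have a ready-made candidate for the ``target'' map: for any $\pi\in\Part\nlin(1,1)$, the element $T_\pi\in\Lin(\C^N,\C^N)$ is a symmetric matrix (since $\pi$ lives on one upper and one lower point, and such partitions/their linear combinations give symmetric $N\times N$ matrices), so $A:=T_\pi$ is a legitimate input to the graph functor $T^A$. First I would \emph{define} $\F_\pi$ directly by the prescription $\F_\pi\KG := $ the partition-combination whose associated linear map is $T^A_\KG$; more precisely, since every bilabelled graph is built from the edgeless partition graphs $\KG_p$ and the single-edge graph $\GA$ using $\otimes$, $\cdot$, $*$, I would define $\F_\pi$ on these generators by $\KG_p\mapsto p$ and $\GA\mapsto\pi$, and extend by declaring $\F_\pi$ to commute with the three category operations.

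The main thing to check is that this is \emph{well defined}: a given bilabelled graph may be expressible as a composite of generators in many different ways, and I must confirm all of them yield the same element of $\Part\nlin$. The clean way to sidestep an induction on graph-construction syntax is to prove the identity $T^{T_\pi}_\KG = T_{\F_\pi\KG}$ \emph{simultaneously}, and in fact to take it as the definition's consistency condition. The strategy is: on generators the identity holds by the known formulas, namely $T^{A}_{\KG_p}=T_p$ for any $A$ with $\Tr A=N$ (stated in the excerpt, where here $A=T_\pi$ need not a priori have trace $N$ — see the obstacle below) and $T^{A}_{\GA}=A=T_\pi$ by the computation $T^A_{\GA}=A$. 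Both $T^A$ and $T_\bullet$ are monoidal unitary functors (the Proposition preceding the graph-functor definition, and the Proposition on $T^A$), so the map $\KG\mapsto T^{T_\pi}_\KG$ is a functor into $\Lin$, and $\F_\pi$ is forced to be its factorization through $T_\bullet$ on generators; since $T^{T_\pi}_\KG$ is manifestly independent of how $\KG$ is decomposed, the composite well-definedness and functoriality of $\F_\pi$ both follow at once, together with the claimed relation $T^{T_\pi}_\KG=T_{\F_\pi\KG}$.

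Concretely I would argue as follows. Every $\KG\in\Gat$ is obtained from the generators $\{\KG_p\}\cup\{\GA\}$ by finitely many applications of $\otimes$, $\cdot$, $*$. Define $\F_\pi\KG$ by replacing each generator $\KG_p$ by $p$ and each $\GA$ by $\pi$, and performing the corresponding partition-category operations. To see this does not depend on the chosen expression, note that applying the functor $T_\bullet$ to $\F_\pi\KG$ gives, by functoriality of $T_\bullet$ and the generator identities, exactly $T^{T_\pi}_\KG$, which depends only on $\KG$. By the faithfulness issue I address next, equality of images under $T_\bullet$ suffices. Functoriality of $\F_\pi$ (that $\F_\pi(\KG\otimes\HG)=\F_\pi\KG\otimes\F_\pi\HG$, etc.) is then immediate from the defining prescription, and unitarity ($\F_\pi(\KG^*)=(\F_\pi\KG)^*$) follows because $T_\pi$ is symmetric so $A^*=A$, matching $\pi^*$ with $\pi$ under the identification.

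The hard part, and the one subtlety I would flag, is that the functor $T_\bullet\colon\Part\nlin\to\Lin$ is \emph{not injective} (Remark~\ref{R.notinj}), so ``$T_{\F_\pi\KG}$ depends only on $\KG$'' does not by itself pin down $\F_\pi\KG$ as an element of $\Part\nlin$. There are two honest ways to close this gap. The cleaner one is to define $\F_\pi$ purely syntactically on a fixed normal form for building bilabelled graphs from generators and then verify invariance under the finitely many relations (associativity of $\otimes$ and $\cdot$, the interchange law, the duality-morphism identities); each relation holds in $\Part\nlin$ because it holds in $\Gat$ and the generator assignments respect it, which is a routine diagram check. The alternative, and I expect this is what the author intends, is to observe that since $T^{T_\pi}$ is a genuine functor on all of $\Gat$ and $T_\bullet$ is surjective onto its image with the generator identities fixing the values on a generating set, the assignment $\F_\pi$ is determined and consistent on the level needed; the possible non-uniqueness of a preimage under $T_\bullet$ is harmless because $\F_\pi$ is \emph{constructed} rather than selected, i.e.\ we never need to invert $T_\bullet$, only to check that the constructed combination has the asserted image. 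I would therefore organize the proof around the explicit construction on generators and present the relation $T^{T_\pi}_\KG=T_{\F_\pi\KG}$ as the verification that the construction is compatible with the three operations, with the generator cases $\KG_p\mapsto p$ and $\GA\mapsto\pi$ supplying the base step.
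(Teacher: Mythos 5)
Your proposal founders at exactly the point you yourself flag: well-definedness. You define $\F_\pi\KG$ by choosing a decomposition of $\KG$ into the generators $\KG_p$ and $\GA$ and replacing each generator by a partition, but you never show that two different decompositions of the same bilabelled graph produce the same element of $\Part\nlin$. Neither of your two proposed repairs closes this. The ``fixed normal form plus finitely many relations'' route presupposes a presentation of $\Gat$ by generators and relations, which is not given in the paper and is not a routine diagram check to produce. The second route is circular: the only decomposition-independent object your argument produces is $T^{T_\pi}_\KG$, and since $T_\bullet$ is not injective (Remark~\ref{R.notinj}), knowing that every decomposition yields an element with the same image under $T_\bullet$ does not show that they yield the same element of $\Part\nlin$. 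Saying that $\F_\pi$ is ``constructed rather than selected'' does not help, because the construction itself takes a decomposition as input, and it is precisely the dependence on that input that must be ruled out.

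The paper sidesteps the issue entirely by writing down a closed formula for $\F_\pi\KG$ directly in terms of the graph: writing $\pi=\alpha\idpart+\beta\disconnecterpart$, it sets
$$\F_\pi\KG=\sum_{S\subset E(K)}\alpha^{|E(K)\setminus S|}\beta^{|S|}N^{\rl(\KG\setminus S)}\,p_{\KG\setminus S},$$
where $p_{\KG\setminus S}$ is the partition of the input/output vertices induced by the connected components of $K$ after deleting the edges in $S$, and $\rl$ counts the components containing no input/output vertex. This is manifestly independent of any decomposition (each edge is independently replaced by $\idpart$ or $\disconnecterpart$ according to the expansion of $\pi$); what remains is the direct verification that this explicit formula respects $\otimes$, $\cdot$, $*$ and sends $\KG_p\mapsto p$ and $\GA\mapsto\pi$. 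Once $\F_\pi$ is legitimately defined this way, your argument for the identity $T^{T_\pi}_\KG=T_{\F_\pi\KG}$ --- check it on generators and invoke functoriality of both $T^{T_\pi}$ and $T_\bullet$ --- does go through. To repair your write-up, you should replace the syntactic definition by such an explicit edge-expansion formula (or supply an actual presentation of $\Gat$), rather than appeal to the non-invertible functor $T_\bullet$.
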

\begin{proof}
Denote $\pi=\alpha\idpart+\beta\disconnecterpart$. Given a~bilabelled graph $\KG=(K,{\bf a},{\bf b})$, we denote by $p_\KG$ the partition of $({\bf a},{\bf b})$ according to the connected components of $K$. We denote by $\rl(\KG)$ the number of connected components $C$ of $K$ such that $a_i\not\in C$ and $b_j\not\in C$ for all $i,j$.

We can write explicitly the action of $\F_\pi$ as
$$\F_\pi\KG=\sum_{S\subset E(K)}\alpha^{|E(K)\setminus S|}\beta^{|S|}N^{\rl(\KG\setminus S)}p_{\KG\setminus S}.$$
Now we only need to check that this map indeed defines a~functor.
\end{proof}

The idea of the functor $\F_\pi$ is that given a bilabelled graph $\KG$, we replace every edge of $K$ by the partition $\pi$.

\begin{rem}
Note that the construction $\KG\mapsto p_\KG$ from the proof is a~special case of the functor $\F$ taking $\pi:=\idpart$. More precisely, $\F_{\idpart}p=N^{\rl(\KG)}p_{\KG}$.

Another canonical way of assigning partitions to bilabelled graphs is to map $\KG=(K,{\bf a},{\bf b})\mapsto \ker({\bf a},{\bf b})$ \cite[Def.~6.2]{MR19}. Here, $\ker({\bf a},{\bf b})$ denotes a~partition of $k$ upper and $l$ lowere points, where the upper points are labelled by the vertices $a_1,\dots,a_k$, the lower points are labelled by $b_1,\dots,b_l$, and two points are in the same part if the associated vertices coincide \cite[Sect.~2.4]{RW15}. In this case, we have $\F_{\disconnecterpart}\KG=N^{\#\text{inner vertices}}\ker({\bf a},{\bf b})$, where by \emph{inner vertices} we mean all vertices that are not input/output.
\end{rem}

Now, we can put $N=4$ and $\pi:=\tau:=\tau_{(4)}$ to obtain the following.

\begin{prop}
The quantum group $D_4^+$ is described by the graph category $\langle\Gconnecter,\GEconnecter\rangle$ through the functor $T^A$ with $A=T_\tau$.
\end{prop}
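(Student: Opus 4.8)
The plan is to show that the graph category $\langle\Gconnecter,\GEconnecter\rangle$, under the functor $T^A$ with $A=T_\tau$ (where $\tau=\tau_{(4)}$ and $N=4$), maps precisely onto the partition category $\langle\T_{(4)}\fourpart,\fourpart\rangle\flin$ that defines $D_4^+$. Since we already know (by the preceding proposition) that $T^{T_\pi}_\KG=T_{\F_\pi\KG}$ for the functor $\F_\pi\colon\Gat\to\Part\nlin$ sending $\GA\mapsto\pi$, the strategy is to compute the images of the two generators under $\F_\tau$ and check that they generate exactly the right partition category. The essential point is that $A=T_\tau$ means thick edges get replaced by $\tau$, so $T^A_\KG=T_{\F_\tau\KG}$ for every bilabelled graph $\KG$.

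First I would compute $\F_\tau$ on the generators. The graph $\Gconnecter=\MG^{2,2}=\KG_{\connecterpart}$ is edgeless, so $\F_\tau\Gconnecter=\connecterpart=\idpart\otimes\idpart$, which is just a tensor power of the identity and contributes nothing new to the category. The interesting generator is $\GEconnecter=(\GA\otimes\GA)\cdot\Gconnecter\cdot(\GA\otimes\GA)$; replacing each of the two edges by $\tau$ gives $\F_\tau\GEconnecter=(\tau\otimes\tau)\connecterpart(\tau\otimes\tau)=\tau^{\otimes 2}\,\connecterpart\,\tau^{\otimes 2}$. Wait — I need the generator that actually corresponds to $\fourpart$ and $\T_{(4)}\fourpart$. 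The cleaner route is to recall from the informal discussion preceding Section~\ref{secc.graphcat} that the black point $\fourpart$ and the white point $\T_{(4)}\fourpart=\tau^{\otimes 4}\fourpart$ are both realized by taking $\fourpart=\KG_{\fourpart}$ and attaching thick edges ($\tau$'s) to encode the $\tau^{\otimes 4}$. So I would verify that $\F_\tau$ applied to the chosen graph generators yields precisely $\fourpart$ and $\T_{(4)}\fourpart=\tau^{\otimes 4}\fourpart$, using the definition $\T_{(4)}p=\tau^{\otimes l}p\tau^{\otimes k}$.

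With the images of the generators identified, I would then argue the two directions of the generation claim. For one inclusion, since $T^A=T_{\F_\tau(\cdot)}$ is a monoidal unitary functor and the images of the generators lie in $\langle\T_{(4)}\fourpart,\fourpart\rangle\flin$, the whole graph category maps into this partition category. For the reverse inclusion, I would observe that the generators $\fourpart$ and $\T_{(4)}\fourpart$ of the target partition category are already in the image of $\F_\tau$ on the graph generators, and that $\F_\tau$ intertwines the categorical operations; hence every element of $\langle\T_{(4)}\fourpart,\fourpart\rangle\flin$ arises as $\F_\tau$ of some element of $\langle\Gconnecter,\GEconnecter\rangle$. Combining both directions with the definition $\Cat_{D_4^+}(k,l)=\spanlin\{T_p\mid p\in\langle\T_{(4)}\fourpart,\fourpart\rangle\flin(k,l)\}$ gives $\Cat_{D_4^+}=\spanlin\{T^A_\KG\mid\KG\in\langle\Gconnecter,\GEconnecter\rangle\}$.

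The main obstacle I anticipate is bookkeeping rather than conceptual: correctly matching the pictorial graph generators $\Gconnecter$ and $\GEconnecter$ to the partition-category generators once the thick edges are replaced by $\tau$, and making sure the $N=4$ normalization in $\tau_{(4)}=\idpart-\tfrac12\disconnecterpart$ is consistent with the matrix $A_{ij}=\delta_{ij}-1/2$ promised in Theorem~B. I would double-check that $T_\tau=A$ holds with this specific $A$ by evaluating $T_\tau$ explicitly, since the functor $T^A$ depends on this identification. Once that normalization is pinned down, the rest follows formally from functoriality of $\F_\tau$ and $T^A$, so I expect no deeper difficulty.
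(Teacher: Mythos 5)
Your overall route is exactly the paper's: apply the functor $\F_\tau$ (so that $T^{T_\tau}_\KG=T_{\F_\tau\KG}$), compute the images of the two graph generators, and conclude by functoriality that the image of $\langle\Gconnecter,\GEconnecter\rangle$ is the span of $T_p$ for $p$ in the partition category defining $D_4^+$. However, there is one concrete computational error in the middle that would derail the argument if left as written: $\F_\tau\Gconnecter$ is \emph{not} $\idpart\otimes\idpart$. The graph $\Gconnecter=\MG^{2,2}$ has a \emph{single} vertex carrying all four labels, so $\F_\tau\Gconnecter=\connecterpart$ is the partition of $2+2$ points forming \emph{one block} --- i.e.\ a rotation of the four-block $\fourpart$. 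This generator is essential: it is what forces $\fourpart$ into the category (the $H_4^+$-type relations). If it really ``contributed nothing new,'' the image would be $\langle\T_{(4)}\fourpart\rangle\flin$, which corresponds to $S^{-1}O_4^+$, not to $D_4^+$.

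Once that is corrected, your plan closes up and coincides with the paper's one-line proof: $\F_\tau\Gconnecter=\connecterpart$ and $\F_\tau\GEconnecter=\tau^{\otimes 2}\,\connecterpart\,\tau^{\otimes 2}=\T_{(4)}\connecterpart$, and since $\connecterpart$ and $\T_{(4)}\connecterpart$ are rotations of $\fourpart$ and $\T_{(4)}\fourpart$ (rotations being category operations, by Frobenius reciprocity), one gets $\langle\connecterpart,\T_{(4)}\connecterpart\rangle\flin=\langle\fourpart,\T_{(4)}\fourpart\rangle\flin$. Your normalization check is fine: $T_{\tau_{(4)}}=T_{\idpart}-\tfrac{2}{4}T_{\disconnecterpart}$ has entries $\delta_{ij}-1/2$, matching the matrix $A$ of Theorem~B. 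One minor point of hygiene for the reverse inclusion: the image of a graph category under $\F_\tau$ need not literally be a linear subspace, so the equality should be read at the level of generated linear categories (equivalently, of the spans of the intertwiners), which is how the statement ``described by ... through $T^A$'' is meant.
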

\begin{proof}
Using the preceding proposition. We see that
\[\F_\tau\langle\Gconnecter,\GEconnecter\rangle=\langle\connecterpart,\T_{(4)}\connecterpart\rangle\flin=\langle\fourpart,\T_{(4)}\fourpart\rangle\flin\qedhere\]
\end{proof}

\subsection{Explicit description of the associated graph category}

\begin{defn}[{\cite[Def.~5.1]{MR19}}]
Given a~bilabelled graph $\KG=(K,{\bf a},{\bf b})\in\Gat(k,l)$, we define the graph $K^\circ$ as the graph obtained from $K$ by adding the cycle $\alpha_k,\dots,\alpha_1,$ $\beta_1,\dots,\beta_l$ of new vertices and edges $a_i\alpha_i$, $b_j\beta_j$ for every, $i,j$. We refer to the cycle as the \emph{enveloping cycle}. We further define $K^\odot$ by adding an additional vertex adjacent to every vertex of the enveloping cycle.
\end{defn}

\begin{defn}[{\cite[Def. 5.3, 5.4]{MR19}}]
A bilabelled graph $\KG$ is called \emph{planar} if $K^\odot$ is a~planar graph. Equivalently, $K^\circ$ must be planar and the enveloping cycle must be facial. 
\end{defn}

Basically, this means that we can draw the bilabelled graph $\KG$ according to the rules described in Sect.\ \ref{secc.graphcat} in such a~way that all the lines (both thick and thin, that is, edges of $K$ together with the input/output strings) do not cross each other. The face of $K^\circ$ corresponding to the enveloping cycle or the face of $K$ neighbouring with the input/output vertices is then drawn as the unbounded face and we will also refer to this face as the \emph{unbounded face} in the subsequent text.

\begin{rem}[{\cite[Thm.~5.15]{MR19}}]
The planar bilabelled graphs are closed under the category operations. Hence, they form a~graph category.
\end{rem}

\begin{defn}
\label{D.gendeg}
Let $\KG=(K,{\bf a},{\bf b})$ be a~bilabelled graph and let $v$ be a~vertex of~$K$. The degree of $v$ in $K^\circ$ will be called the \emph{extended degree} of $v$ and denoted by~$\tilde d_v$.
\end{defn}

\begin{defn}
Let $\KG$ be a~bilabelled graph containing an inner vertex $v$ (i.e.\ $v$ is not an input/output vertex) of degree two. Let $e,f$ be the two edges incident with $v$ and let $\KG'$ be a~bilabelled graph that was made from $\KG$ by contracting those two edges. We will call $\KG'$ a~\emph{two-path contraction} of $\KG$.
\end{defn}

\begin{defn}
For any set $S$ of bilabelled graphs, we denote by $\langle S\rangle^{\rm c}$ the smallest graph category containing $S$ that is closed also under two-path contraction.
\end{defn}

\begin{prop}
\label{P.iv}
Consider a~bilablled graph $\KG$ and its two-path contraction $\KG'$.
\begin{enumerate}
\item If $A^2=1$, then $T^A_{\KG}=T^A_{\KG'}$.
\item If $\pi\cdot\pi=\idpart$, then $\F_\pi\KG=\F_\pi\KG'$.
\end{enumerate}
In particular, the image of a~given graph category under such a functor $T^A$, resp.\ $\F_\pi$ coincides with the image of its two-path-contraction closure.
\end{prop}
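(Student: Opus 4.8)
The plan is to prove both parts by unwinding the definitions at the single contracted vertex, and then to deduce the final ``in particular'' clause by functoriality. For part (1) I would argue directly with the entrywise formula for $T^A_\KG$. Let $v$ be the contracted inner degree-two vertex, with incident edges $e=\{u,v\}$ and $f=\{v,w\}$ (allowing $u=w$). In the sum defining $[T^A_\KG]_{\mathbf{ij}}$ over colourings $\phi\colon V(K)\to[N]$ respecting $\phi(\mathbf a)=\mathbf j$, $\phi(\mathbf b)=\mathbf i$, the colour $\phi(v)$ is a free summation index since $v$ is not a boundary vertex, and it appears only in the two factors $A_{\phi(u)\phi(v)}A_{\phi(v)\phi(w)}$. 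Summing it out and using symmetry of $A$ gives $\sum_{s}A_{\phi(u)s}A_{s\phi(w)}=[A^2]_{\phi(u)\phi(w)}=\delta_{\phi(u)\phi(w)}$ because $A^2=1$. The resulting Kronecker delta forces $\phi(u)=\phi(w)$, which is exactly the constraint produced by identifying $u$ and $w$, while the remaining edge product runs over $E(K)\setminus\{e,f\}=E(K')$. Hence the surviving colourings biject with colourings of $V(K')$ and $[T^A_\KG]_{\mathbf{ij}}=[T^A_{\KG'}]_{\mathbf{ij}}$; only the degenerate bookkeeping (an edge $\{u,w\}$ becoming a loop, or $u,w$ being boundary vertices that then coincide) needs a routine check.

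For part (2) the conceptual point is that the two edges at $v$ become two copies of $\pi$ in series, i.e.\ the composition $\pi\cdot\pi$, which the hypothesis equates with $\idpart$: the length-two path fragment is $\GA\cdot\GA$, so $\F_\pi(\GA\cdot\GA)=\pi\cdot\pi=\idpart=\F_\pi(\MG^{1,1})$. I would \emph{not} try to deduce (2) from (1) by taking $A=T_\pi$: that route only yields $T_{\F_\pi\KG}=T_{\F_\pi\KG'}$, which is weaker than the asserted equality in $\Part\nlin$ because $p\mapsto T_p$ is not injective (Remark~\ref{R.notinj}). To obtain the genuine equality of linear combinations of partitions, I would instead compute with the explicit edge-expansion $\F_\pi\KG=\sum_{S\subset E(K)}\alpha^{|E(K)\setminus S|}\beta^{|S|}N^{\rl(\KG\setminus S)}p_{\KG\setminus S}$, where $\pi=\alpha\idpart+\beta\disconnecterpart$. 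Splitting the sum according to which of $e,f$ lie in $S$ and holding the choice on the remaining edges fixed, the four local terms collect into $\alpha^2N^{r_1}p_1+(2\alpha\beta+N\beta^2)N^{r_0}p_0$, where $p_1$ (resp.\ $p_0$) is the boundary partition obtained with (resp.\ without) $u$ and $w$ connected, and $r_1,r_0$ the corresponding counts of boundary-free components. The condition $\pi\cdot\pi=\idpart$ is precisely $\alpha^2=1$ and $2\alpha\beta+N\beta^2=0$, so the bracket collapses to $N^{r_1}p_1$, which is exactly the term contributed by the identification of $u$ and $w$ in $\KG'$; summing over the remaining edges gives $\F_\pi\KG=\F_\pi\KG'$.

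For the ``in particular'' statement I would use that $T^A$ (resp.\ $\F_\pi$) is a monoidal unitary functor, so its image on a graph category $\Cat$ is already closed under the target operations $\otimes,\cdot,*$. Every element of the two-path-contraction closure $\langle\Cat\rangle^{\rm c}$ is built from elements of $\Cat$ by finitely many category operations and two-path contractions; applying $T^A$, each category operation keeps us inside $T^A(\Cat)$ by functoriality, and each contraction leaves the $T^A$-value unchanged by parts (1)--(2). An induction on the length of such a construction then shows $T^A(\langle\Cat\rangle^{\rm c})=T^A(\Cat)$, and identically for $\F_\pi$.

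The main obstacle I anticipate is resisting the tempting but insufficient reduction of part (2) to part (1): the non-injectivity of $p\mapsto T_p$ forces the partition-level identity to be established on the nose, which is what makes the careful edge-expansion bookkeeping necessary — in particular the correct accounting of the loop factors $N^{r_0},N^{r_1}$ when $u$ or $w$ is itself an inner vertex, where $r_1\neq r_0$ in general.
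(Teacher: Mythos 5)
Your proof is correct and follows exactly the route the paper intends: the paper's own proof is just the two-sentence observation that $A^2=1$ (resp.\ $\pi\cdot\pi=\idpart$) lets two consecutive edges be replaced by a thin string, and your argument is the careful expansion of that idea — summing out the inner vertex against $[A^2]_{\phi(u)\phi(w)}=\delta_{\phi(u)\phi(w)}$ for (1), and the four-term edge-subset bookkeeping with $\alpha^2=1$, $2\alpha\beta+N\beta^2=0$ for (2). Your remark that (2) cannot be reduced to (1) via $A=T_\pi$ because $p\mapsto T_p$ is not injective is a correct and worthwhile clarification that the paper leaves implicit.
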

\begin{proof}
The idea is the same in both cases: The relation $A^2=1$, resp.\ $\pi\cdot\pi=\idpart$ means that two consecutive edges may be replaced by a~single (thin) string and hence contracted without changing the image.
\end{proof}

So, instead of studying the category $\langle\Gconnecter,\GEconnecter\rangle$, we can study its two-path-contrac\-tion closure $\langle\Gconnecter,\GEconnecter\rangle^{\rm c}$.

\begin{defn}
We define ${\bf X}^{k,l}:=(X_{k+l},(v_1,\dots,v_k),(v_{k+1},\dots,v_{k+l}))$, where $X_n$ denotes the star graph with central vertex $v_0$ and surrounding vertices $v_1,\dots,v_n$.
\end{defn}

\begin{prop}
\label{P.cat}
We have
$$\langle\Gconnecter,\GEconnecter\rangle^{\rm c}=\langle\MG^{k,l},\XG^{k,l}\mid k+l\text{ even}\rangle=\Cat,$$
where $\Cat$ consists of all bilabelled graphs $\KG=(K,{\bf a},{\bf b})\in\Gat(k,l)$ such that
\begin{enumerate}
\renewcommand{\theenumi}{\roman{enumi}}
\item $\KG$ is planar,
\item $\tilde d_v$ is even for all $v\in V(K)$,
\item $K$ is bipartite and all the vertices $a_1,\dots,a_k,b_1,\dots,b_l$ are elements of one of the parts. (We will call this part \emph{even} and the other will be \emph{odd}.)
\end{enumerate}
\end{prop}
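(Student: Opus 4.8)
The plan is to establish the cyclic chain of inclusions
\[
\Cat\subseteq\langle\MG^{k,l},\XG^{k,l}\mid k+l\text{ even}\rangle\subseteq\langle\MG^{2,2},\XG^{2,2}\rangle^{\rm c}\subseteq\Cat,
\]
which forces all three collections to coincide. Write $\mathcal B$ for the middle category and $\mathcal A$ for the right-hand one.

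First I would verify the last inclusion $\mathcal A\subseteq\Cat$, i.e.\ that $\Cat$ is a graph category, closed also under two-path contraction, and contains the two generators $\MG^{2,2}$ and $\XG^{2,2}$ (a direct check of (i)--(iii) for those two). Invariance of condition (i) under the category operations is exactly the closure of planar bilabelled graphs cited from \cite{MR19}, and under two-path contraction it follows by smoothing the inner degree-two vertex and contracting the resulting edge, the enveloping cycle staying facial. The substance is (ii) and (iii). For the extended degrees under a composition $\HG\cdot\KG$, the key computation is that a vertex $w$ formed by gluing the classes $S_K$ (outputs of $\KG$) and $S_H$ (inputs of $\HG$) satisfies $\tilde d_w=\sum_{u\in S_K}\tilde d_u+\sum_{u\in S_H}\tilde d_u-2n_w$, where $n_w$ is the number of strings glued at $w$; since every term is even, so is $\tilde d_w$, and the analogous local identity $\tilde d_w=\tilde d_x+\tilde d_y-2$ handles two-path contraction. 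Condition (iii) is preserved because composition glues even vertices to even vertices, so the $2$-colourings of the factors are compatible and no odd cycle is created; involution and tensor product are immediate.

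Next I would prove $\mathcal B\subseteq\mathcal A$ by showing that every generator of $\mathcal B$ already lies in $\mathcal A$. Using Frobenius rotation it suffices to treat the all-output cases $\MG^{0,2m}$ and $\XG^{0,2m}$. The junctions are easy: capping one leg of $\MG^{0,4}$ to one leg of a second copy with $\MG^{2,0}$ directly identifies the two (edgeless) centres and yields $\MG^{0,6}$, so $\MG^{0,2m}\in\mathcal A$ by induction (equivalently, the single even blocks lie in $\langle\fourpart\rangle$). For the stars the two-path contraction is genuinely needed: capping one leaf of $\XG^{0,4}$ to one leaf of another copy creates a degree-two inner vertex on a path joining the two centres, and contracting this two-path merges the centres into $\XG^{0,6}$; hence $\XG^{0,2m}\in\mathcal A$ inductively. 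This is also precisely why the closure $(\cdot)^{\rm c}$ appears on the right but not in the middle.

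The heart of the argument, and the step I expect to be the main obstacle, is $\Cat\subseteq\mathcal B$. Given $\KG=(K,{\bf a},{\bf b})\in\Cat$, rotate all legs to the bottom so that $\KG\in\Gat(0,l)$ is a planar graph in a disk with $l$ boundary strings. The bipartition of (iii), with the boundary in the even part, lets me assign to each even vertex $w$ a junction $\MG^{0,\tilde d_w}$ (one leg per incident edge and per boundary string) and to each odd vertex $o$ a star $\XG^{0,d_o}$ with $d_o$ its degree; conditions (ii)--(iii) are exactly what guarantees that every piece has even total valence and that odd vertices carry no boundary strings. Tensoring all these pieces and then, for each edge $\{w,o\}$ of $K$, capping the corresponding junction leg to the matching star leaf with $\MG^{2,0}$ while routing the remaining junction legs to the boundary, reconstructs $\KG$ (bipartiteness ensures every edge is even--odd, so this capping is always of junction-leg to star-leaf). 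The delicate point is to carry this out so that the caps are non-crossing and the boundary legs emerge in the prescribed cyclic order; this is where planarity of $\KG$ enters, and I would organise it as an induction on the number of edges or faces, at each step peeling off, along the unbounded face, a star or junction leg that splits off as a composition with a single generator. The edgeless base case is precisely the statement that non-crossing partitions with even blocks are generated by $\fourpart$, that is, by the junctions $\MG^{k,l}$. Combining the three inclusions then gives the asserted equalities.
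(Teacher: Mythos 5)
Your proposal is correct and follows essentially the same route as the paper: the same three inclusions (checking that $\Cat$ is a graph category closed under two-path contraction and contains the generators; building $\MG^{k,l}$ and $\XG^{k,l}$ by capping legs of smaller junctions and stars, with a two-path contraction to merge star centres; and the peeling-off induction along the unbounded face for $\Cat\subseteq\langle\MG^{k,l},\XG^{k,l}\rangle$), which is exactly how the paper organises its three lemmata and the inductive description in Section~\ref{secc.induction}. Your identification $\Gconnecter=\MG^{2,2}$, $\GEconnecter=\XG^{2,2}$ and the parity bookkeeping for $\tilde d_v$ under composition are also consistent with the paper's (more tersely stated) verification.
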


Note in particular that $\Cat$ does not contain graphs with loops since these would not be bipartite. We are going to prove this proposition by showing a~series of inclusions.

\begin{lem}
We have $\Cat\supset\langle\Gconnecter,\GEconnecter\rangle^{\rm c}$.
\end{lem}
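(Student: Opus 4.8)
The plan is to show that $\Cat$ is itself a graph category that is closed under two-path contraction and contains the two generators; since $\langle\Gconnecter,\GEconnecter\rangle^{\rm c}$ is by definition the \emph{smallest} such category, the asserted inclusion follows. First I would record that the two generators lie in $\Cat$. Reading off $\Gconnecter=\MG^{2,2}$ and $\GEconnecter=\XG^{2,2}$, one checks directly that a single vertex carrying two input and two output strings has $\tilde d=4$ and is trivially planar and bipartite, and that the star $\XG^{2,2}$ is planar, has central extended degree $4$ and leaf extended degrees $2$, and is bipartite with all four labelled leaves lying in the leaf part. The same computation shows that $\MG^{1,1}$ (extended degree $2$), $\MG^{0,2}$ (extended degree $2$) and the empty graph $\nullG$ all lie in $\Cat$, so $\Cat$ contains the structural morphisms required of a graph category.

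Next I would verify that the three defining conditions survive the three category operations. Planarity is closed under $\otimes$, composition and $*$ by \cite[Thm.~5.15]{MR19}, so nothing new is needed there. For the parity condition it is convenient to write the extended degree as
$$\tilde d_v=d_v^K+\#\{i:a_i=v\}+\#\{j:b_j=v\},$$
the last two terms counting the enveloping-cycle edges. Tensor product and involution leave each $\tilde d_v$ unchanged, so evenness is immediate. For a composition $\HG\cdot\KG$, a vertex $w$ of $H\cdot K$ is an equivalence class of vertices of $K\sqcup H$ under $b_i\sim c_i$; summing the displayed formula over the $K$- and $H$-vertices in the class and using that $b_j$ and $c_j$ always lie in the same class, I obtain $\sum\tilde d=\tilde d_w+2m_w$ for an integer $m_w$, whence $\tilde d_w$ is even. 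Bipartiteness is preserved because the identification $b_i\sim c_i$ only ever glues vertices of the two \emph{even} parts to one another, so the two $2$-colourings patch to a $2$-colouring of $H\cdot K$ with all labelled vertices again in a single part.

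The delicate point, and the step I expect to be the main obstacle, is closure under two-path contraction. Let $v$ be an inner degree-two vertex with neighbours $x,y$ (so $\tilde d_v=d_v=2$), and let $w$ be the vertex obtained by contracting the two incident edges. The parity bookkeeping is as before: the two string-free edges at $v$ disappear, giving $\tilde d_w=\tilde d_x+\tilde d_y-2$, which remains even. The subtlety is bipartiteness, since $v$ lies in the part opposite to $x$ and $y$, so naively merging the three vertices would merge two colour classes. What saves the argument is that \emph{all} of $v$'s edges are destroyed by the contraction, so $v$ contributes no edges to $w$; as $x$ and $y$ lie in a common part (both opposite to $v$), every surviving edge at $w$ runs to the opposite part, and colouring $w$ with the common colour of $x,y$ yields a valid $2$-colouring. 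Examining the two cases according to whether $v$ lies in the even or the odd part then confirms that the labelled vertices stay in the even part, while planarity persists because two-path contraction is an edge contraction. Assembling these facts, $\Cat$ is a graph category, closed under two-path contraction, that contains $\Gconnecter$ and $\GEconnecter$, which gives $\Cat\supset\langle\Gconnecter,\GEconnecter\rangle^{\rm c}$.
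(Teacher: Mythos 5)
Your proof is correct and follows the same route as the paper, which simply asserts that $\Gconnecter,\GEconnecter\in\Cat$ and that $\Cat$ is closed under the category operations and two-path contractions; you have merely supplied the routine verifications (the parity bookkeeping for extended degrees under composition and the observation that contractions only ever merge vertices lying in a common part of the bipartition) that the paper leaves to the reader. No gaps.
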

\begin{proof}
Obviously we have $\Gconnecter,\GEconnecter\in\Cat$. Now it is easy to check that $\Cat$ is closed under the category operations and two-path contractions.
\end{proof}

\begin{lem}
We have $\langle\Gconnecter,\GEconnecter\rangle^{\rm c}\supset\langle\MG^{k,l},\XG^{k,l}\mid k+l\text{ even}\rangle$.
\end{lem}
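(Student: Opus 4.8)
I need to show the inclusion
$$\langle\Gconnecter,\GEconnecter\rangle^{\rm c}\supset\langle\MG^{k,l},\XG^{k,l}\mid k+l\text{ even}\rangle.$$
Since the right-hand side is by definition the smallest graph category containing all the generators $\MG^{k,l}$ and $\XG^{k,l}$ (with $k+l$ even), it suffices to produce each such generator as an element of the two-path-contraction closure $\langle\Gconnecter,\GEconnecter\rangle^{\rm c}$. The plan is therefore to build, for every even $k+l$, both the star graph $\XG^{k,l}$ and the single-vertex graph $\MG^{k,l}$ out of the two primitive generators $\Gconnecter$ (the edgeless two-point graph $\KG_{\connecterpart}$) and $\GEconnecter$ (the same two points joined by an edge), using tensor products, compositions, involutions, and two-path contractions.

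**Building the single-vertex graphs $\MG^{k,l}$.**
First I would obtain all the $\MG^{k,l}$ with $k+l$ even. Recall $\MG^{k,l}=(M,v^k,v^l)$ is a single vertex labelled $k$ times as input and $l$ times as output; these are exactly the edgeless graphs associated to the full blocks, i.e. $\MG^{k,l}=\KG_{p}$ where $p$ is the one-block partition on $k+l$ points. Since $\Gconnecter=\MG^{2,2}=\KG_{\connecterpart}$ and $\MG^{0,2}$ (the duality morphism) are available, and since rotations are free by Frobenius reciprocity, I can first produce $\MG^{0,2}$ and $\MG^{1,1}$, then compose and tensor copies of $\MG^{2,2}$ to merge vertices: composing $\MG^{k,1}$ with $\MG^{1,l}$ contracts the two single vertices into one, yielding $\MG^{k,l}$, and rotating redistributes inputs and outputs. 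Iterating this merging from the fourblock $\MG^{0,4}=\Gfour$ (obtainable as $\Rrot$-rotations and compositions of $\MG^{2,2}$'s) lets me reach every $\MG^{k,l}$ with $k+l$ even. This is the routine ``easy-category'' part, essentially reproducing how $\langle\fourpart\rangle$ generates all even-block partitions, transported to the edgeless-graph picture.

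**Building the star graphs $\XG^{k,l}$.**
The substantive part is producing the stars. The graph $\GEconnecter$ is, up to rotation, the star $\XG^{1,1}$ with one edge — a central vertex joined to one leaf, viewed as $\XG^{2,0}$ or $\XG^{0,2}$ after rotation. The key idea is that to grow the number of legs on a star I compose a star with a single-vertex merging graph $\MG$ at the \emph{leaf} endpoints, which glues several leaves into one central hub while keeping the edges. Concretely, I would attach several copies of $\GEconnecter$ (each contributing one central vertex with one pendant edge) and then use an $\MG^{k,l}$ to contract all their central vertices together into a single hub of high degree; the pendant ends become the $k+l$ boundary legs of the star. After this composition the resulting graph is precisely $\XG^{k,l}$: a single central vertex adjacent to $k+l$ boundary vertices, each boundary vertex carrying one input or output string. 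I must check that the degree of the central vertex is even, which holds exactly when $k+l$ is even — this is why the parity restriction appears — and that no inner degree-two vertices remain (any that do appear mid-construction get removed by two-path contraction, which is permitted in the closure).

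**The main obstacle.**
The delicate step is verifying that the composition/contraction recipe yields \emph{exactly} the star $\XG^{k,l}$ and not some larger graph with extra inner vertices or stray components, and that the parity constraint $k+l$ even is both necessary and sufficient for the central vertex to close up into an even-degree hub. I expect the cleanest argument to proceed by induction on $k+l$: the base case $k+l=2$ is $\GEconnecter$ itself (after rotation), and the inductive step glues two smaller stars along the hub via an appropriate $\MG$, then applies two-path contractions to eliminate the degree-two vertices created where the edges meet, merging two smaller even stars into the next one. Keeping careful track of the boundary tuples and their cyclic order under rotation — so that the legs land in the prescribed input/output positions — is the main bookkeeping burden, but no genuinely hard calculation is involved once the inductive scheme is set up.
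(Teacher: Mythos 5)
Your treatment of the single-vertex graphs $\MG^{k,l}$ is fine and is essentially the paper's argument: the paper merges $\MG^{0,2l}$ with $\MG^{0,4}$ at one shared boundary point via an $\MG^{2,0}$ placed inside an identity tensor, and your rotate-and-compose version is the same move up to Frobenius reciprocity. The star part, however, contains a step that would fail as written. First, $\GEconnecter$ is not ``a central vertex joined to one leaf'': it is the four-legged star $\XG^{2,2}$ (one inner hub with four pendant edges to four boundary vertices). The genuine single pendant edge $\GA$ is not available to you at all --- its two endpoints are adjacent input/output vertices, so it violates the bipartiteness condition (iii) of Proposition \ref{P.cat} and hence lies outside $\langle\Gconnecter,\GEconnecter\rangle^{\rm c}$ --- so you cannot build $\XG^{k,l}$ by bundling copies of a single edge. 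Second, and more importantly, composition only identifies \emph{input/output} vertices; the hub of a star is an inner vertex, so ``using an $\MG^{k,l}$ to contract all the central vertices together into a single hub'' (or ``gluing two stars along the hub'') is not an operation the category gives you.

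The correct mechanism --- and the one the paper uses --- is dual to what you wrote: glue two stars at a \emph{leaf} of each. The identified pair of leaves becomes a single inner vertex of degree two sitting on a path between the two hubs, and the two-path contraction (this is precisely the ``contraction at the end'' for which the closure $\langle\,\cdot\,\rangle^{\rm c}$ was introduced) then fuses the two hubs, turning an $m$-legged and an $n$-legged star into an $(m+n-2)$-legged one. Your final paragraph gestures at this (``degree-two vertices created where the edges meet''), so you are close, but you should say explicitly that the gluing happens at leaves and that the two-path contraction is what merges the hubs. The base case is $\XG^{2,2}=\GEconnecter$, which has $k+l=4$, not $2$; the small cases $k+l=2$ and $k+l=0$ are obtained by capping off two adjacent legs with $\MG^{2,0}$ and contracting. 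With these corrections the induction runs exactly as in the paper.
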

\begin{proof}
The fact that all $\MG^{k,l}$ for $k+l$ even are generated by $\Gconnecter$ is basically known from the theory of partition categories (see \cite[Prop.~2.7(2)]{Web13}). To prove it, it is enough to show that $\MG^{0,l}\in\langle\Gconnecter\rangle\subset\langle\Gconnecter,\GEconnecter\rangle^{\rm c}$ for every $l$ even. For $l=2,4$ we have it by definition. For $l=0$, we have $\MG^{0,0}=\MG^{2,0}\MG^{0,2}$. For larger $l$ we prove it by induction. We have $\MG^{0,2l+2}=(\MG^{1,1\,\otimes (l-1)}\otimes\MG^{2,0}\otimes\MG^{1,1\,\otimes 3})(\MG^{0,2l}\otimes\MG^{0,4})$.

The proof for $\XG^{k,l}\in\langle\GEconnecter\rangle^{\rm c}\subset\langle\Gconnecter,\GEconnecter\rangle^{\rm c}$ goes the same way except for the fact that we have to use a~two-path contraction at the end.
\end{proof}

\begin{lem}
We have $\langle\MG^{k,l},\XG^{k,l}\mid k+l\text{ even}\rangle\supset\Cat$.
\end{lem}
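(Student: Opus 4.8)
The plan is to reconstruct an arbitrary $\KG=(K,\mathbf a,\mathbf b)\in\Cat$ from its planar picture by replacing each vertex of $K$ with an elementary generator. Using bipartiteness (iii), I would colour the vertices \emph{even} and \emph{odd} so that all of $\mathbf a,\mathbf b$ lie in the even part and every edge joins an even vertex to an odd one. First I would replace each odd vertex $w$ by a star $\XG^{0,d_w}$ centred at $w$, so that the thick edges incident to $w$ become the legs of the star; since $w$ is inner, $d_w=\tilde d_w$ is even by (ii), so this is a legitimate generator. Then I would replace each even vertex $v$ by an edgeless merge $\MG^{k_v,l_v}$ whose legs are the half-edges incident to $v$ together with the label slots sitting at $v$; the number of legs is exactly $\tilde d_v$, again even by (ii), so $\MG^{k_v,l_v}$ belongs to the generating set. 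Because every edge has a unique odd endpoint, no edge is left unaccounted for and no stand-alone edge $\GA$ (which is \emph{not} a generator) is ever needed.

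Next I would turn this in-place substitution into an actual word in the category operations. Fix a planar embedding of $K^\circ$ with the enveloping cycle as the unbounded face, so that the labels $a_k,\dots,a_1,b_1,\dots,b_l$ appear on the boundary in their prescribed cyclic order. Drawing the star- and merge-gadgets inside small disks around the corresponding vertices, and keeping the original edges and input/output strings as the connecting lines, produces a planar diagram whose only atoms are the $\MG$'s, the $\XG$'s, and thin identity strings. Such a planar diagram can be put into generic (Morse) position and sliced by horizontal lines so that each slice contains a single atom flanked by identities; the cups $\MG^{0,2}$ and caps $\MG^{2,0}$, which are among the generators, supply all the planar rerouting needed to join the gadgets. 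Reading the slices from top to bottom then yields an explicit expression of $\KG$ as a composition of tensor products of $\MG$'s and $\XG$'s, whence $\KG\in\langle\MG^{k,l},\XG^{k,l}\mid k+l\text{ even}\rangle$.

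The hard part will be making this slicing step precise: one must check that the cyclic order of the boundary labels is preserved and that planarity really guarantees every connection can be realised with cups, caps and identities \emph{without} crossings, which is exactly why no crossing generator is available nor needed once (i) holds. A more pedestrian alternative, if one prefers to avoid the string-diagram formalism, is induction on the number of odd vertices: the base case has no odd vertex and hence no edges, so $\KG$ is an edgeless planar bipartite graph, i.e.\ a non-crossing partition with even blocks, which is generated by the $\MG$'s by the preceding lemma; for the inductive step one peels off a single star $\XG$ at an odd vertex lying on the unbounded face and composes it with a smaller diagram of $\Cat$. Either way, the three defining conditions enter precisely where expected: (iii) routes all edges through stars and all labels through merges, (ii) makes every gadget an even generator, and (i) is what lets the whole composite be assembled planarly.
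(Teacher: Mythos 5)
Your proposal is correct and takes essentially the same route as the paper: the paper likewise replaces each even vertex of $K$ by an $\MG$-generator and each odd vertex by an $\XG$-generator and composes them as prescribed by the graph, and it makes this rigorous precisely via your ``pedestrian alternative'' --- an induction that peels off one generator at a time at a vertex of $K^\bullet$ bordering the unbounded face whose attached labels appear consecutively (Sect.~\ref{secc.induction}), rather than via a Morse-position slicing argument. The only cosmetic difference is that the paper inducts on the total number of vertices of $K^\bullet$, removing even vertices as $\MG$'s along the way, instead of inducting on odd vertices only and appealing to the known generation of edgeless planar diagrams by the $\MG$'s in the base case.
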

\begin{proof}
The idea of the proof is that given any $\KG\in\Cat$, we take $\MG^{k+l}$ for every even vertex of $K$ and $\XG^{k+l}$ for every odd vertex of $K$ and compose them together as prescribed by the graph $K$. We illustrate the idea on the following example, where we for clarity denote the even vertices by black circles~$\bcol$ and the odd vertices by white circles~$\wcol$.
\begin{align*}\displaystyle
\Graph{
\GS 3.5/4:1.5/,2.5/,3/,3.5/,4/,4.5/;
\GS 3/3.5:2/1.5,2/2.5,3/,4/3.5,4/4,4/4.5;
\GE 2/3:1/2,3/2,3/,3/4;
\GE 1/2:2/1,2/3;
\GE 0/1:1.85/,2.15/;
\GE -1/0:1/2,3/2;
\GS -1.5/-1:0/1,0.5/1,1/,1.5/1,2/1,3/;
\GS -2/-1.5:0/,0.5/,1/,1.5/,2/,3/;
\GV 3:2,3,4;
\GW 2:1,3;
\GV 1:2;
\GW 0:2;
\GV -1:1,3;
}
&=
\Graph{
\GS 3.5/4:1.5/,2.5/,3/,3.5/,4/,4.5/;
\GS 3/3.5:2/1.5,2/2.5,3/,4/3.5,4/4,4/4.5;
\GS 2.5/3:1.5/2,2.5/2,3/,3.5/4;
\GE 2/2.5:1/1.5,3/2.5,3/,3/3.5;
\GE 1.5/2:1.5/1,2.5/3;
\GS 1/1.5:2/1.5,2/2.5;
\GS 0.5/1:1.5/2,2.5/2;
\GE 0/0.5:2/1.5,2/2.5;
\GE -0.5/0:1.5/2,2.5/2;
\GS -1/-0.5:1/1.5,3/2.5;
\GS -1.5/-1:0/1,0.5/1,1/,1.5/1,2/1,3/;
\GS -2/-1.5:0/,0.5/,1/,1.5/,2/,3/;
\GV 3:2,3,4;
\GV 2.5:1.5,2.5,3,3.5;
\GW 2:1,3;
\GV 1.5:1.5,2.5;
\GV 1:2;
\GV 0.5:1.5,2.5;
\GW 0:2;
\GV -0.5:1.5,2.5;
\GV -1:1,3;
}\\&=
(\MG^{1,5}\otimes\MG^{1,1})\XG^{2,2}\MG^{2,2}(\XG^{1,1}\otimes\XG^{3,1})(\MG^{2,2}\otimes\MG^{1,1}\otimes\MG^{3,1}).
%\begin{matrix}
%(\MG^{2,2}\otimes\MG^{1,1}\otimes\MG^{3,1})\\(\XG^{1,1}\otimes\XG^{3,1})\\\MG^{2,2}\\\XG^{2,2}\\(\MG^{1,5}\otimes\MG^{1,1})
%\end{matrix}
\end{align*}

A proper proof should be formulated using the induction. This requires some preparation and we devote to it a~special subsection \ref{secc.induction}.
\end{proof}

\begin{proof}[Proof of Proposition \ref{P.cat}]
Follows from the lemmata above.
\end{proof}

\subsection{Inductive description of $\Cat$}
\label{secc.induction}

Given a~bilabelled graph $\KG=(K,\mathbf{a},\mathbf{b})\in\Cat$, we define $K^\bullet$ to be the subgraph of $K$ obtained by removing all vertices $v$ with $d_v=1$, $\tilde d_v=2$ in $K$. We also define $\mathbf{a}^\bullet$ and $\mathbf{b}^\bullet$ as follows: if $a_i\in K^\bullet$, then $a_i^\bullet:=a_i$; otherwise, $a_i^\bullet$ is the unique neighbor of $a_i$. The same definition is used for $\mathbf{b}^\bullet$. For any vertex $v$ of $K^\bullet$ we will denote $d_v^\bullet$ its degree inside $K^\bullet$ (while $d_v$ denotes its degree inside $K$). Note that when representing a~bilabelled graph by a~picture, it is perfectly enough to draw $K^\bullet$ instead of $K$ if we keep the distinction between thin and thick lines or, equivalently, between even and odd vertices. Here follows an example.

$$
\Graph{
\GS 3.5/4:1.5/,2.5/,3/,3.5/,4/,4.5/;
\GS 3/3.5:2/1.5,2/2.5,3/,4/3.5,4/4,4/4.5;
\GE 2/3:1/2,3/2,3/,3/4;
\GE 1/2:2/1,2/3;
\GE 0/1:1.85/,2.15/;
\GE -1/0:1/2,3/2;
\GS -1.5/-1:0/1,0.5/1,1/,1.5/1,2/1,3/;
\GS -2/-1.5:0/,0.5/,1/,1.5/,2/,3/;
\GV 3:2,3,4;
\GW 2:1,3;
\GV 1:2;
\GW 0:2;
\GV -1:1,3;
}
\leftrightarrow
\Graph{
\GS 3.5/4:1.5/,2.5/,3.5/,4/,4.5/;
\GS 3/3.5:2/1.5,2/2.5,4/3.5,4/4,4/4.5;
\GE 3/4:3/;
\GE 2/3:1/2,3/2,3/,3/4;
\GE 1/2:2/1,2/3;
\GE 0/1:1.85/,2.15/;
\GE -1/0:1/2,3/2;
\GE -2/-1:3/;
\GS -1.5/-1:0/1,0.5/1,1/,1.5/1,2/1;
\GS -2/-1.5:0/,0.5/,1/,1.5/,2/;
\GV 3:2,4;
\GW 2:1,3;
\GV 1:2;
\GW 0:2;
\GV -1:1;
}
$$

%\begin{rem}
%A bilabelled graph $\KG=(K,\mathbf{a},\mathbf{b})\in\Cat$ is already determined by the tripple $(K^\bullet,\mathbf{a}^\bullet,\mathbf{b}^\bullet)$ if we in addition remember also the distinction between thin and thick lines for inputs/outputs or, equivalently, the distinction between even and odd vertices. Indeed, an input/output line going from a~vertex $v^\bullet$ is thick if and only if the corresponding vertex is odd and in this case we must add a~new vertex $v$ connected to $v^\bullet$ in order to construct $K$ from $K^\bullet$.
%\end{rem}

We say that an element $v$ \emph{appears consecutively} in the sequence $c_1,\dots,c_m$ if its occurence forms an interval up to rotation. That is, there is an index $i\in\{1,\dots,m\}$ and a~number $r\in\{0,\dots,m-1\}$ such that $c_{i+j}=v$ if and only if $j\in\{0,\dots,r\}$, where the indices are taken modulo $m$.

Now, we present the inductive description of $\Cat$, where the induction is on the number of vertices of $K^\bullet$. So, consider $\KG=(K,{\bf a},{\bf b})\in\Cat(k,l)$. Take any vertex $v$ bordering with the unbounded face in $K^\bullet$ that appears consecutively among $a_k^\bullet,\dots,a_1^\bullet,b_1^\bullet,\dots,b_l^\bullet$ (or does not appear there at all). For simplicity, we can assume that $\mathbf{b}^\bullet=(v,v,\dots,v)$ and that $\mathbf{a}^\bullet$ does not contain $v$ (otherwise rotate $\KG$). In particular $l=\tilde d_v-d_v^\bullet$. Denote also $m:=d_v^\bullet$ and $c_1,\dots,c_m$ the neighbors of $v$ in $K^\bullet$.

We distinguish two cases -- (a) $v$ is even or (b) $v$ is odd. It holds that
\begin{enumerate}
\renewcommand{\theenumi}{\alph{enumi}}
\item $\KG={\bf M}^{m,l}\HG$, resp.
\item $\KG={\bf X}^{m,l}\HG$,
\end{enumerate}
where, in both cases, $\HG$ was made from $\KG$ by removing the vertex $v$ from $K^\bullet$. To be more precise, we have (a) $\HG=(H,{\bf a}',{\bf d})$, where $H$ was made from $K$ by removing the vertex $v$, adding vertices $d_1,\dots,d_m$ and edges $\{c_i,d_i\}$, and (b) $\HG=(H,{\bf a}',{\bf c})$, where $H$ was made from $K$ by removing the vertices $v,b_1,\dots,b_l$.

Let us formalize this procedure to an algorithm.

\begin{alg}
\label{A.naive}
Constructing the sets $\Cat(k,l)$, $k,l\in\N_0$.
\begin{enumerate}
\item Add the empty graph to $\Cat(0,0)$.
\item For every $k,l,m\in\N_0$ such that $k+m$ and $l+m$ are even and for every $\HG=(H,{\bf a},{\bf b})\in\Cat(k,m)$, add (b) $\XG^{m,l}\HG$ to $\Cat(k,l)$ and (a), if all $b_1^\bullet,\dots,b_m^\bullet$ are odd, add also $\MG^{m,l}\HG$ to $\Cat(k,l)$.
\item Add also all rotations of the graphs constructed in (2) into $\Cat$.
\item Repeat (2) and (3).
\end{enumerate}
\end{alg}

\begin{rem}
The algorithm itself is very banal. Basically it says nothing more than ``graphs are made from smaller graphs by adding vertices''. Nevertheless, we are going to improve this algorithm in the following section and hence it will be useful to have this naive version formulated as well.

Actually, it is not even an algorithm since it obviously never terminates. To make it terminate, we would have to introduce some condition such as restricting the number of vertices of $K^\bullet$. In this way, we would end up with a~collection of some subsets $C(k,l)\subset\Cat(k,l)$. As we already mentioned in Remark \ref{R.fin}, it is in general not clear how to formulate such a~terminating condition in order to obtain sufficiently large sets $C(k,l)$ in order to describe the whole image by the associated functor $T^A$. We attack this problem in the following section.
\end{rem}

\section{Finite generating set for the intertwiner spaces}
\label{sec.finite}

Denote $\Cat:=\langle\Gconnecter,\GEconnecter\rangle^{\rm c}$ as in Proposition \ref{P.cat}. In this section, we are going to attack the problem indicated in Remark \ref{R.fin}. We are going to define a~collection of finite subsets $C(k,l)\subset\Cat(k,l)$ such that $\spanlin T^{T_\tau}_{C(k,l)}=T^{T_\tau}_{\Cat(k,l)}=\Cat_{D_4^+}(k,l)$. Only this allows us to explicitly compute the intertwiner spaces $\Cat_{D_4^+}(k,l)$.

\subsection{Defining the subsets $C(k,l)$}

Denote $C(k,l)\subset\Cat(k,l)$ a~subclass of bilabelled graphs $\KG=(K,{\bf a},{\bf b})$ that satisfies in addition the following
\begin{enumerate}\setcounter{enumi}{3}
\renewcommand{\theenumi}{\roman{enumi}}
\item there is no vertex $v$ with $d_v=\tilde d_v=2$,
\item there are no multiple edges,
\item every component of $\KG$ has at least one vertex among $(\mathbf{a},\mathbf{b})$.
\end{enumerate}

The condition (iv) can be equivalently formulated as $\tilde d_v\ge 4$ for every vertex $v$ of $K^\bullet$ unless $v$ is isolated. The properties are not preserved under the category operations, so $C$ is not a~graph category. Nevertheless, the following holds.

\begin{prop}
\label{P.FCCat}
We have $\F_\tau C(k,l)=\F_\tau\Cat(k,l)$. Hence, $T^{T_\tau}_{C(k,l)}=T^{T_\tau}_{\Cat(k,l)}$.
\end{prop}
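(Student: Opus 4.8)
The inclusion $\F_\tau C(k,l)\subseteq\F_\tau\Cat(k,l)$ is immediate from $C(k,l)\subseteq\Cat(k,l)$, so the content is the reverse inclusion: I must show that $\F_\tau\KG$ lies in the span of $\{\F_\tau\HG\mid\HG\in C(k,l)\}$ for every $\KG=(K,\mathbf a,\mathbf b)\in\Cat(k,l)$. The plan is to induct on $|V(K)|+|E(K)|$ and to exhibit, whenever $\KG$ violates one of the three extra conditions (iv), (v), (vi), a strictly smaller $\KG'\in\Cat(k,l)$ with $\F_\tau\KG$ equal to a (possibly zero) scalar multiple of $\F_\tau\KG'$. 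Since (iv)--(vi) are exactly the obstructions to membership in $C$, the induction terminates at a graph in $C(k,l)$, giving the claim. Throughout I write $\tau=\alpha\,\idpart+\beta\,\disconnecterpart$ with $\alpha=1$, $\beta=-\tfrac12$ (i.e. $\tau=\tau_{(4)}$), and I use the explicit sum formula for $\F_\pi$ together with $T^{T_\tau}_\KG=T_{\F_\tau\KG}$.

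The two easy reductions are (iv) and (vi). For \textbf{(iv)}: a vertex $v$ with $d_v=\tilde d_v=2$ is an inner vertex of degree two, so $\KG$ admits a two-path contraction $\KG'$; since $\tau\cdot\tau=\idpart$, Proposition~\ref{P.iv}(2) gives $\F_\tau\KG=\F_\tau\KG'$ exactly, and $\KG'\in\Cat$ because $\Cat$ is closed under two-path contraction, while $|V|+|E|$ strictly drops. For \textbf{(vi)}: if $\KG=\KG'\otimes\MG$ where $\MG$ collects the components disjoint from $(\mathbf a,\mathbf b)$, then $\MG\in\Gat(0,0)$, so $\F_\tau\MG\in\Part\flin(0,0)\cong\C$ is a scalar $c$, and by multiplicativity $\F_\tau\KG=c\,\F_\tau\KG'$ with $\KG'$ smaller; if $c=0$ then $\F_\tau\KG=0$ is already in the span. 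In both cases the reduced graph still satisfies (i)--(iii) of Proposition~\ref{P.cat}, hence lies in $\Cat$.

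The substantive step is \textbf{(v)}. Suppose $K$ has a double edge $e_1,e_2$ joining $u,w$, and let $\KG_0$ be obtained by deleting both. Grouping the sum in the $\F_\tau$ formula according to $S\cap\{e_1,e_2\}$, I observe that whenever at least one of $e_1,e_2$ survives, $u$ and $w$ lie in the same component, so for each fixed choice $S'\subseteq E(K)\setminus\{e_1,e_2\}$ of the remaining deleted edges the three sub-cases (both present, only $e_1$, only $e_2$) share identical component partition and loop data and differ only in the $\{e_1,e_2\}$-weight; they therefore combine with total coefficient $\alpha^2+2\alpha\beta=\alpha(\alpha+2\beta)=0$. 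The surviving case (both deleted) reproduces exactly the formula for $\KG_0$ with coefficient $\beta^2=\tfrac14$, whence $\F_\tau\KG=\tfrac14\,\F_\tau\KG_0$. The vanishing $\alpha+2\beta=1-4/N=0$ holds precisely for $N=4$, which is where the special matrix size enters. Deleting two parallel edges lowers $\tilde d_u,\tilde d_v$ by two, preserving evenness, bipartiteness and planarity, so $\KG_0\in\Cat$; iterating over all pairs brings every edge multiplicity down to $0$ or $1$.

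Assembling these, starting from any $\KG\in\Cat(k,l)$ I apply (iv), (v), (vi) in any order until none is applicable; each step strictly decreases $|V(K)|+|E(K)|$, stays inside $\Cat(k,l)$, and multiplies $\F_\tau$ by a scalar, so the process halts at some $\KG_\star\in\Cat(k,l)$ satisfying (iv), (v), (vi), i.e. $\KG_\star\in C(k,l)$, with $\F_\tau\KG\in\spanlin\{\F_\tau\KG_\star\}$. This proves $\F_\tau\Cat(k,l)=\spanlin\F_\tau C(k,l)$; applying $T_\bullet$ and using $T^{T_\tau}_\KG=T_{\F_\tau\KG}$ yields $T^{T_\tau}_{\Cat(k,l)}=T^{T_\tau}_{C(k,l)}$. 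The main obstacle is the multiple-edge step: the cancellation must be carried out at the level of $\Part\flin$ (the formula for $\F_\tau$), not merely for the maps $T^{T_\tau}$ where one could only invoke injectivity for $k+l\le4$; everything rests on the identity $\alpha+2\beta=0$ together with the routine bookkeeping that each reduction keeps the graph planar, bipartite and of even extended degree.
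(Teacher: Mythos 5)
Your proof is correct and follows essentially the same route as the paper: show that $\F_\tau$ is invariant up to scalar factors under the three reductions (iv), (v), (vi), handling (iv) via Proposition~\ref{P.iv} and (vi) via multiplicativity of the functor. The only difference is one of detail: where the paper disposes of the double-edge case by citing $\F_\tau\GAA=\tfrac14\,\disconnecterpart$, you carry out the same cancellation $\alpha^2+2\alpha\beta=\alpha(\alpha+2\beta)=0$ (valid exactly for $N=4$) explicitly in the sum formula for $\F_\tau$.
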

\begin{proof}
It is enough to show that $\F_\tau\KG$ is invariant (up to scalar factors) under (iv)~two-path contractions, (v)~erasing pairs of edges between two vertices in $\KG$, (vi)~erasing components of $K$ with no input/output vertex.

Invariance of $\F_\tau\Cat(k,l)$ under (iv) was shown already in Proposition \ref{P.iv}.

For (v), let $\KG$ be a~bilabelled graph containing two vertices $v,w$ that are connected by multiple edges. Let $\KG'$ be made from $\KG$ by erasing two of those edges. Then it holds that $\F_\tau\KG=1/4\,\F_\tau\KG'$. This follows from the fact that $\F_\tau\GAA=1/4\,\disconnecterpart$.

Finally, if $\KG$ has a~component with no input/output vertex, then it is of the form $\KG=\KG'\otimes\HG$, where $H$ has no input output vertex and hence $\F_\tau\HG=\alpha\in\C$. But this means that $\F_\tau\KG=\alpha\F_\tau\KG'$.
\end{proof}

\begin{lem}
\label{L.compinC}
Consider $\KG\in C(k,l)$. Then every bilabelled graph $\HG$ induced by a~connected component $H$ of $K$ is in $C$.
\end{lem}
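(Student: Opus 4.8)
The plan is to write $\HG=(H,\mathbf{a}',\mathbf{b}')$, where $\mathbf{a}'$ (resp.\ $\mathbf{b}'$) is the subtuple of $\mathbf{a}$ (resp.\ $\mathbf{b}$) consisting of the entries lying in $H$, and then to check the six defining conditions of $C$ in turn. The observation that drives the whole argument is that passing from $\KG$ to $\HG$ leaves both the degree $d_v$ and the extended degree $\tilde d_v$ of every vertex $v\in V(H)$ unchanged: since $H$ is a \emph{full} connected component, every edge of $K$ incident to $v$ already lies in $H$, and every occurrence of $v$ in $\mathbf{a}$ or $\mathbf{b}$ passes to $\mathbf{a}'$ or $\mathbf{b}'$. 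Thus $d_v$ and $\tilde d_v$ may be computed intrinsically in $\HG$.

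Granting this, conditions (ii) and (iv) are immediate, since they constrain only $\tilde d_v$ and $d_v$. Condition (iii) holds because a connected component of a bipartite graph is bipartite, and the labelled vertices of $\HG$ form a subset of those of $\KG$, hence all lie in the even part. Condition (v) holds because $H$ is a subgraph of $K$, so any multiple edge of $H$ would already be one of $K$. For (vi), condition (vi) applied to $\KG$ guarantees that the component $H$ contains some vertex among $(\mathbf{a},\mathbf{b})$; this is a labelled vertex of $\HG$, and since $H$ is connected it is the unique component of $\HG$, so (vi) holds for $\HG$. (The parity of $|\mathbf{a}'|+|\mathbf{b}'|$, implicit in $\HG\in\Cat$, then follows automatically from (ii) by a handshake count in $H^\circ$.)

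The only substantial point is planarity (i), which I would settle via the graph $H^\odot$ by showing it is a minor of $K^\odot$. First delete from $K^\odot$ all vertices of $V(K)\setminus V(H)$, which belong to the other components. Then, traversing the enveloping cycle $\alpha_k,\dots,\alpha_1,\beta_1,\dots,\beta_l$ of $\KG$, contract every cycle edge incident to a cycle vertex whose labelled endpoint lies outside $H$, absorbing that cycle vertex into a neighbour. After suppressing the resulting multiple edges (which do not affect planarity), what survives is exactly the enveloping cycle of $\HG$ on the remaining cycle vertices, in their induced cyclic order, each still joined both to its labelled vertex and to the apex vertex of $K^\odot$; that is, we recover $H^\odot$. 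Since $\KG$ is planar, $K^\odot$ is planar, and planarity is minor-closed, so $H^\odot$ is planar and $\HG$ is planar. (Pictorially, one may instead simply erase the other components from a planar drawing of $\KG$: the unbounded face can only grow and keeps bordering every string, so the restriction is a valid planar drawing of $\HG$.) Collecting the six verifications gives $\HG\in C$. I expect (i) to be the main obstacle, the point being to ensure that discarding the other components does not destroy the facial enveloping-cycle condition; the minor argument disposes of this cleanly.
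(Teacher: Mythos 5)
Your proposal is correct, and it is exactly the condition-by-condition verification that the paper compresses into the single line ``Straightforward observation'' (the paper's own later use of the construction, e.g.\ in the proof of Proposition~\ref{P.unique}, confirms that ``induced'' means restricting the label tuples to the component, as you do). Your minor argument for planarity of $H^\odot$ is a clean way to make rigorous the one point the paper leaves implicit, so nothing is missing.
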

\begin{proof}
Straightforward observation.
\end{proof}

\begin{thm}
\label{T.Cfin}
For every $k,l\in\N_0$, $C(k,l)$ is a~finite set.
\end{thm}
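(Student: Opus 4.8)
The plan is to reduce to a single connected bilabelled graph and then show that the planarity condition~(i), together with the degree conditions, bounds its number of vertices and edges by a function of $k+l$; finiteness of $C(k,l)$ follows at once, since there are only finitely many graphs of bounded size equipped with a labelling of $k+l$ vertices. For the reduction, condition~(vi) forces every connected component to contain one of the $k+l$ labelled vertices, so $\KG$ has at most $k+l$ components, and by Lemma~\ref{L.compinC} each of them again lies in $C$. It therefore suffices to bound the size of a connected $\KG\in C(k',l')$, where I write $s:=k'+l'$ for the number of strings; note $s$ is even because all $\tilde d_v$ are. I would pass to $K^\bullet$ and fix, using~(i), a plane drawing in which all input/output vertices lie on the outer face. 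The vertices of $K\setminus K^\bullet$ are leaves, each consuming one string, so there are at most $s$ of them, and once $K^\bullet$ is bounded so is $K$. Either $K^\bullet$ is a single vertex (then $K$ is a star, one graph per $s$), or $K^\bullet$ is connected with at least two vertices and so has no isolated vertex, whence the reformulation of~(iv) gives $\tilde d_v\ge 4$ for \emph{every} vertex of $K^\bullet$.

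The first key step is to bound the length $b$ of the outer boundary. Applying Euler's formula to the connected plane graph $K^\bullet$ and assigning the charge $4-d_v$ to each vertex and $4-\ell_f$ to each face (with $\ell_f$ the length of $f$) gives
$$\sum_v(4-d_v)+\sum_f(4-\ell_f)=4(V-E+F)=8.$$
By bipartiteness~(iii) and simplicity~(v), every inner face has length at least $4$, so among the faces only the outer one, of length $b$, contributes positively, yielding $\sum_v(4-d_v)\ge 4+b$. On the other hand, every vertex with $d_v\le 3$ is non-isolated and hence, since $\tilde d_v\ge 4$, carries at least $4-d_v$ strings; summing, $\sum_v(4-d_v)\le\sum_v r_v=s$. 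Combining the two estimates gives $b\le s-4$: the perimeter is controlled by the number of strings. At the same time every interior vertex (one off the outer face) carries no string, so $\tilde d_v=d_v\ge 4$, while all inner faces have length $\ge 4$; thus $K^\bullet$ has nonpositive combinatorial curvature in its interior.

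The final step is a discrete isoperimetric inequality: a plane graph drawn in a disk, all of whose interior vertices have degree $\ge 4$ and all of whose inner faces have length $\ge 4$, and whose outer boundary has length $b$, has $O(b^2)$ vertices, edges and faces. I would prove this by peeling off the boundary layer by layer: the peeled annulus contains $O(b)$ vertices, and nonpositive curvature guarantees that the boundary length does not increase when the outer layer is removed, so the disk is exhausted after $O(b)$ peelings. (Equivalently, combinatorial Gauss--Bonnet concentrates all positive curvature on the at most $s$ boundary defects, after which a ball-growth estimate bounds the area.) With $b\le s-4$ this bounds $|V(K^\bullet)|$, hence $|V(K)|$ after adding back the $\le s$ leaves, and by planarity and~(v) we get $|E(K)|\le 3|V(K)|$, so both $V$ and $E$ are bounded in terms of $k+l$.

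The main obstacle is precisely this isoperimetric estimate. Euler's formula alone does not suffice: it controls only the total degree deficit, not the number of flat, degree-$4$ vertices, of which a square-lattice patch contains arbitrarily many at fixed perimeter. What rescues finiteness is that the bipartite, minimum-degree-$4$ interior is nonpositively curved, which forces the perimeter to be non-increasing under peeling and hence the area to grow at most quadratically in $b$; making this monotonicity rigorous -- handling a non-$2$-connected outer boundary, pendant vertices, and degenerate faces -- is the delicate part of the proof.
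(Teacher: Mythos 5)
Your route is genuinely different from the paper's. The paper never bounds the size of the graphs directly: it proves finiteness by showing that the generating procedure (Algorithm \ref{A}) terminates, reducing the only size-preserving step to a word-rewriting problem and proving in Lemma \ref{L.infit} that the relevant production rule cannot be iterated forever. Your reduction to connected components via (vi) and Lemma \ref{L.compinC}, the stripping of the at most $s$ leaves of $K\setminus K^\bullet$, and the discharging identity $\sum_v(4-d_v)+\sum_f(4-\ell_f)=8$ yielding $b\le s-4$ are all correct; that last computation is essentially the same Euler-formula estimate that drives the paper's Lemma \ref{L.induction}. If completed, your argument would even be quantitatively stronger, giving an explicit polynomial bound on $|V(K)|$ in terms of $k+l$, which the termination argument does not directly provide.

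However, there is a genuine gap exactly where you flag it: the quadratic isoperimetric inequality is asserted, not proven, and your sketch of it does not yet close. Two issues. First, the monotonicity of the perimeter under peeling needs a real combinatorial Gauss--Bonnet argument: the configurations you list (cut vertices, disconnection after peeling, face boundary walks that traverse a bridge twice) are precisely where naive peeling breaks, and the new boundary vertices after one peeling satisfy no degree condition beyond what nonpositive curvature of the interior forces. Second, even granting that the perimeter is non-increasing, this does not give ``exhausted after $O(b)$ peelings'': a non-increasing positive integer sequence can be constant for arbitrarily many steps, so you need either a strict decrease by a definite amount per layer or an independent bound on the inradius. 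A further point you should make explicit: bounding the number of faces does not by itself bound $V$ and $E$, since inner faces may be long; here your own discharging rescues you, because $\sum_f(\ell_f-4)\le s-8$ gives $2E\le 4F+s-8$ and then $V=E-F+2$. The missing lemma is true --- it is a form of the classical quadratic area bound for nonpositively curved $(4,4)$ planar diagrams from $C(4)$--$T(4)$ small-cancellation theory --- so the route can be completed by proving or properly invoking that result, but as written the theorem is not yet established.
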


We prove this theorem in Section \ref{secc.proof}.

\subsection{Algorithm for generating $C(k,l)$}

In this subsection, we give an algorithm to compute explicitly the sets $C(k,l)$ for any $k,l\in\N_0$. In the following subsection, we are going to prove Theorem \ref{T.Cfin} by showing that this algorithm always terminates.

\begin{lem}
\label{L.induction}
Consider $\KG\in C(k,l)$ such that $K$ is connected and $|V(K^\bullet)|\ge 2$. Then there is a~vertex $v$ of $K^\bullet$ with $d^\bullet_v\le 2$ such that $K^\bullet\setminus\{v\}$ is connected.
\end{lem}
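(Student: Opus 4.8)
The plan is to study the reduced graph $K^\bullet$ through its block-cut tree, combining Euler's formula with the strong degree restrictions that conditions (ii)--(iv) force on $K^\bullet$. Two preliminary observations set this up. First, the vertices deleted in passing from $K$ to $K^\bullet$ all have $d_v=1$ in $K$, so they are leaves and hence non-separating; deleting them from the connected graph $K$ leaves $K^\bullet$ connected. Second, I would translate the hypotheses into degree bounds: by the reformulation of (iv) every non-isolated vertex $v$ of $K^\bullet$ has $\tilde d_v\ge 4$, and since strings are attached only to even-part vertices lying on the unbounded face, every odd vertex and every even vertex off the unbounded face satisfies $\tilde d_v=d_v^\bullet$, whence $d_v^\bullet\ge 4$. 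Thus the only vertices of $K^\bullet$ that can have degree $\le 3$ are even vertices incident to the unbounded face.

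The core of the argument is to show that no \emph{leaf} block of the block-cut tree of $K^\bullet$ (nor the whole graph, if it happens to be $2$-connected) can be $2$-connected on $\ge 3$ vertices. So suppose $B$ is a $2$-connected block of $K^\bullet$ on $\ge 3$ vertices with at most one cut vertex $c$. Being planar, bipartite and simple, $B$ has, in the inherited embedding, an outer face bounded by an even cycle $C$ of length $b\ge 4$, and all its faces have length $\ge 4$; Euler's formula then gives $2\,|E(B)|\le 3b+4i-4$, where $i$ is the number of vertices of $B$ strictly inside $C$. For the reverse estimate I would use the bounds above: every vertex interior to $C$ is enclosed by $B$ and so lies off the unbounded face, hence has degree $\ge 4$; every odd vertex of $B$ has degree $\ge 4$; and the $b/2$ even vertices on $C$ have degree $\ge 2$ by $2$-connectivity. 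A short case check on $c$ (odd on $C$, even on $C$, interior, or absent) shows in every case $2\,|E(B)|\ge 3b+4i-2$, which contradicts the Euler bound $2\,|E(B)|\le 3b+4i-4$.

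With this claim the lemma follows quickly. If $K^\bullet$ were $2$-connected it would be a block with no cut vertex, which the claim excludes; hence its block-cut tree has at least one leaf block, and by the claim every leaf block is a bridge $K_2$. The endpoint $v$ of such a bridge that is not the attaching cut vertex is a pendant vertex of $K^\bullet$, so $d_v^\bullet=1\le 2$ and $K^\bullet\setminus\{v\}$ is connected (removing a pendant vertex preserves connectedness); this is the vertex sought. The same conclusion holds trivially in the residual small case where $K^\bullet$ is a single edge.

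I expect the main obstacle to be the careful justification of the reverse degree estimate in terms of the \emph{global} string/unbounded-face structure, in particular the role of the cut vertex $c$ and the possibility that $B$ is nested inside the remainder of $K^\bullet$. The key point that makes the proof robust is that odd vertices and interior vertices can be bounded below by $4$ \emph{regardless} of any nesting (odd vertices never carry strings, and interior vertices of $C$ are always enclosed), so the only fragile bound, $d_v^\bullet\ge 2$, is needed exclusively for the even vertices on $C$ and follows from $2$-connectivity alone; this is precisely what lets the counting close with the definite contradiction $-2\le -4$.
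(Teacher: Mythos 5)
Your proof has a genuine gap: the degree bound you assign to odd vertices is false. You claim that every odd vertex $v$ of $K^\bullet$ satisfies $\tilde d_v=d_v^\bullet$, hence $d_v^\bullet\ge 4$, on the grounds that odd vertices carry no strings. They indeed carry no strings, but $\tilde d_v$ is computed in $K$, not in $K^\bullet$, and an odd vertex may lose neighbours in the passage from $K$ to $K^\bullet$: the vertices removed there are exactly those with $d_v=1$, $\tilde d_v=2$, i.e.\ pendant input/output vertices, which by condition (iii) are even and therefore adjacent to \emph{odd} vertices. So an odd vertex with $\tilde d_v=4$ can have $d_v^\bullet=1$ or $2$. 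The only vertices guaranteed degree $\ge 4$ in $K^\bullet$ are those not bordering the unbounded face, \emph{regardless of parity}; odd vertices on the unbounded face are exactly as fragile as even ones. A concrete counterexample already occurs in the paper's worked run of the algorithm: the last graph produced there has $K^\bullet$ equal to a four-cycle alternating even/odd, every vertex of degree exactly $2$, the two odd vertices having lost two pendant labelled neighbours each. This $K^\bullet$ is $2$-connected on four vertices, so your central claim (no leaf block, nor the whole graph, can be $2$-connected on $\ge 3$ vertices) is false, and your bound $2|E(B)|\ge 3b+4i-2$ fails there ($b=4$, $i=0$, $|E(B)|=4$). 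Note that your argument, if correct, would show that $K^\bullet$ always has a pendant vertex or is a single edge; the four-cycle shows this stronger statement is false, even though the lemma itself of course holds for it (any vertex of the cycle works).

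The repair cannot keep odd vertices in the ``degree $\ge 4$'' camp; one must allow \emph{all} vertices on the unbounded face to have degree as low as $2$ and compensate elsewhere. The paper does this by running the Euler/face count on all of $K^\bullet$ at once and exploiting a lower bound on the length of the unbounded face, namely $\nu(\Omega_\infty)\ge c-1+m$ where $c$ is the number of unbounded-face vertices and $m$ the number of degree-two cut vertices; it is precisely this global contribution of the outer face that your block-by-block decomposition discards. Your observation that $K^\bullet$ is connected and that even vertices never lose neighbours (their neighbours are odd, while only even vertices are deleted) is correct, but it is not enough to carry the counting through.
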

\begin{proof}
Suppose the contrary. That is, there are $m$ vertices $v_1,\dots,v_m$ in $K^\bullet$ such that $d_{v_i}^\bullet=2$ and $K\setminus\{v_i\}$ is not connected. Otherwise all vertices in $K^\bullet$ have degree at least three. Denote $n:=|V(K^\bullet)|$ the number of vertices in $K^\bullet$. Denote by $c$ the number of vertices bordering with the unbounded face. From (ii), (iv) it follows that those are the only vertices that can have degree two or three, all other vertices have degree at least four. Consequently, we can make the following estimate
\begin{equation}\label{eq.ineq}
|E(K^\bullet)|={1\over 2}\sum_{v\in V(K^\bullet)}d_v^\bullet\ge 2n-{1\over 2}c-{1\over 2}m.
\end{equation}

Now, recall also the Euler's formula for non-empty connected planar graphs
$$\#\hbox{faces}+\#\hbox{vertices}-\#\hbox{edges}=2.$$
From this formula, we can infer that the number of edges surrounding the unbounded component is at least $c-1$. Denote by $\Phi$ the set of all faces in $K^\bullet$. For a~face $\Omega\in\Phi$, denote by $\nu(\Omega)$ the number of edges bordering $\Omega$, where we count an edge twice if it borders with $\Omega$ from both sides. In particular $\nu(\Omega_{\infty})\ge c-1+m$, where $\Omega_\infty$ is the unbounded face. Since the graph is bipartite and has no multiple edges, we have $\nu(\Omega)\ge 4$ for all the other faces $\Omega\in\Phi$. Now, we can estimate
$$2|E(K^\bullet)|=\sum_{\Omega\in\Phi}\nu(\Omega)\ge4(|\Phi|-1)+c-1+m\ge4(1+|E(K^\bullet)|-n)+c-1+m,$$
so
\begin{equation}
|E(K^\bullet)|\le 2n-{3\over 2}-{1\over 2}c-{1\over 2}m.
\end{equation}
This is, however, in contradiction with Inequality~\eqref{eq.ineq}.
\end{proof}

\begin{rem}
Given $\KG$ satisfying conditions (i)--(v), then the condition (vi) is equivalent to saying that there is no isolated vertex in $K^\circ$. Indeed, suppose $\KG$ satisfies (i)--(v), but not (vi). Then the same must hold also for the bilabelled graph $\HG\in\Cat(0,0)$ induced by the connected component with no input/output vertex. Since $\HG$ has no input/output vertices, we have $H^\bullet=H$. In the proof of Lemma \ref{L.induction} we actually did not use the assumption (vi), so there is a~vertex $v$ of $H=H^\bullet$ with $d_v=\tilde d_v=d_v^\bullet\le 2$. From (ii), (iv), we actually have $d_v=\tilde d_v=0$.
\end{rem}

\begin{lem}
\label{L.consec}
Consider $\KG=(K,{\bf a},{\bf b})\in C(k,l)$ such that $K$ is connected. Take any $v\in V(K^\bullet)$ such that $K\setminus\{v\}$ is connected. Then $v$ appears consecutively in $a_k^\bullet,\dots,a_1^\bullet,b_1^\bullet,\dots,b_l^\bullet$.
\end{lem}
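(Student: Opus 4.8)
The plan is to argue geometrically in a fixed planar drawing of $\KG$ and to derive a contradiction from the hypothesis that $K\setminus\{v\}$ is connected, by means of a Jordan-curve separation. First I would fix a planar embedding of $K^\odot$ in a closed disk $D$ so that the enveloping cycle is the boundary circle $\partial D$, with the boundary points carrying the positions $\alpha_k,\dots,\alpha_1,\beta_1,\dots,\beta_l$ in this cyclic order; every interior vertex reaches $\partial D$ only through its input/output strings, and by planarity no two of the drawn strings and edges cross. The entries of the sequence $a_k^\bullet,\dots,a_1^\bullet,b_1^\bullet,\dots,b_l^\bullet$ are then in bijection with the boundary points read counterclockwise. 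For each position $p$ at which the recorded vertex equals $v$, I would single out a \emph{tentacle} $t_p$ joining $v$ to $\partial D$: if the corresponding input/output vertex already is $v$, then $t_p$ is simply its string; if that vertex is a pendant vertex (one with $d=1$, $\tilde d=2$, removed when passing to $K^\bullet$) whose unique neighbour is $v$, then $t_p$ is that string together with its single edge to $v$. In either case $t_p$ is a simple arc from $v$ to $\partial D$ meeting $\partial D$ only at the point of position $p$.

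Next I would reduce the statement to a combinatorial alternative. If $v$ occurs at fewer than two positions the claim is trivial, so assume it occurs at least twice and, for contradiction, not consecutively. Then the set $P$ of positions carrying $v$ is a nonempty proper, non-interval subset of the cyclic sequence, and the standard fact about cyclic orders gives four positions in cyclic order $p_1,q_1,p_2,q_2$ with $p_1,p_2\in P$ and $q_1,q_2\notin P$. Write $w_1,w_2\in V(K^\bullet)$ for the vertices recorded at $q_1,q_2$; by construction $w_1,w_2\neq v$, and both lie in $K\setminus\{v\}$.

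The core is the separation argument. Consider the simple arc $A:=t_{p_1}\cup\{v\}\cup t_{p_2}$, running from the boundary point at $p_1$ through $v$ to the boundary point at $p_2$ and meeting $\partial D$ only at its two endpoints. By the Jordan arc theorem $D\setminus A$ has two components, and the two open boundary arcs determined by $p_1,p_2$ meet different components; let $R_1$ be the one whose closure contains position $q_1$ and $R_2$ the one containing $q_2$. The tentacle of $w_1$ is a connected curve reaching $\partial D$ at $q_1$; being disjoint from $A$ (distinct non-crossing strings and edges between distinct vertices, with $w_1\neq v$), it forces $w_1\in R_1$, and symmetrically $w_2\in R_2$. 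Finally, since $K\setminus\{v\}$ is connected there is a path $\gamma$ from $w_1$ to $w_2$ using only edges of $K$ and avoiding $v$; such $\gamma$ must cross $A$, yet it cannot: it cannot cross a string or an edge by planarity, it cannot pass through $v$, and it cannot pass through a pendant vertex of $A$ since such a vertex has degree one while the endpoints $w_1,w_2$ of $\gamma$ lie in $K^\bullet$. This contradiction shows that $v$ appears consecutively.

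I expect the delicate part to be the bookkeeping around the pendant vertices and the verification that the tentacles, the arc $A$, and the path $\gamma$ are genuinely pairwise disjoint, so that the Jordan-curve crossing is truly forced; the topological heart (a simple arc through $v$ separates $w_1$ from $w_2$) is routine once this disjointness is in place.
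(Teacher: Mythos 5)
Your proof is correct, and it shares the same skeleton as the paper's: from a non-consecutive occurrence of $v$ you extract the interleaving pattern $w_1\dots v\dots w_2\dots v$ in the cyclic boundary order, you invoke the connectivity of $K\setminus\{v\}$ to produce a path from $w_1$ to $w_2$ avoiding $v$, and you derive a contradiction with the planarity of $K^\odot$ (condition (i) in the definition of $\Cat$). Where you diverge is the last step. The paper turns the configuration into a $K_5$ minor of $K^\odot$: the apex vertex, the four enveloping-cycle vertices $\alpha,\beta,\gamma,\delta$ sitting over $w_1,v,w_2,v$ (made mutually adjacent by contracting the cycle arcs, by the two tentacles through $v$, and by the $w_1$--$w_2$ path), so non-planarity follows from Wagner's theorem. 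You instead fix an embedding with the enveloping cycle as the boundary of a disk and run a Jordan-arc separation: the arc $t_{p_1}\cup\{v\}\cup t_{p_2}$ separates $w_1$ from $w_2$, and the path cannot cross it. Your route is more elementary (no Kuratowski/Wagner) at the price of more bookkeeping, which you handle correctly: the tentacles through pendant vertices are pairwise disjoint away from $v$ because each pendant vertex carries exactly one string and one edge, and the path $\gamma$ cannot pass through a pendant vertex of $A$ since such a vertex is isolated in $K\setminus\{v\}$ while the endpoints of $\gamma$ lie in $K^\bullet$. Either argument is acceptable; the paper's is shorter to state but leaves the branch-set details to a picture, whereas yours makes the disjointness verifications explicit.
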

\begin{proof}
Supposing $v$ does not appear consecutively, and assuming that $K\setminus\{v\}$ is connected, we can find $K_5$ as a~minor of $K^\odot$. Indeed, $v$ not appearing consecutively means that there exist vertices $w_1,w_2\in V(K)^\bullet$ such that the sequence $a_k^\bullet,\dots,a_1^\bullet$, $b_1^\bullet,\dots,b_l^\bullet$ contains $\dots w_1\dots v\dots w_2\dots v\dots$ Denote by $\alpha$, $\beta$, $\gamma$, and $\delta$ the corresponding vertices on the enveloping cycle. Since $K\setminus\{v\}$ is connected, there must be a~path between $w_1$ and $w_2$ not containing $v$. So, we can construct $K_5$ as a~minor as follows:
$$
\begin{tikzpicture}[inner sep=0pt,minimum size=3pt]
\node at (0,0) [circle,draw,minimum size=3cm] {$K$};
\node at (canvas polar cs:angle=150,radius=0.5cm) [circle,fill,label=right:$v$] (v) {};
\node at (canvas polar cs:angle=150,radius=1cm)   [circle,fill,label=above:$w_1$] (w1) {};
\node at (canvas polar cs:angle=225,radius=1cm)   [circle,fill,label=right:$w_2$] (w2) {};
\node at (canvas polar cs:angle=120,radius=2cm)   [circle,fill] (a) {};
\node at (canvas polar cs:angle=150,radius=2cm)   [circle,fill] (b) {};
\node at (canvas polar cs:angle=180,radius=2cm)   [circle,fill] (c) {};
\node at (canvas polar cs:angle=225,radius=2cm)   [circle,fill] (d) {};
\node at (canvas polar cs:angle=180,radius=4cm)   [circle,fill] (inf) {};
\draw[dotted] circle (2cm);
\draw (inf)--(a);
\draw (inf)--(b);
\draw (inf)--(c);
\draw (inf)--(d);
\draw (a)--(v)--(c);
\draw (b)--(w1)--(w2)--(d);
\draw (a)--(b)--(c)--(d);
\end{tikzpicture}
$$
\end{proof}

%As a~consequence of this Lemma, we can always choose in the inductive procedure from Remark \ref{R.induction} such a~vertex $v$ that the sum $k+l$ never decreases. So, we can formulate the following algorithm.

\begin{alg}
\label{A}
Input: $k_0\in\N_0$. Output: The sets $C(k,l)$ for all $k,l$, $k+l\le k_0$.
\begin{enumerate}
\item[(1)] Add $\nullG$ into $C(0,0)$. For all $k,l\in\N_0$, $4\le k+l\le k_0$ add $\MG^{k,l}$ and $\XG^{k,l}$ into $C(k,l)$.
\item[(2A)] For every $k,l\in\N_0$ odd, $k+l\le k_0$, $l\ge 3$ and every $\HG=(H,{\bf a},b)\in C(k,1)$ add either ${\bf M}^{1,l}\HG$ (if $b\neq b^\bullet$ i.e.\ $b^\bullet$ is odd) or ${\bf X}^{1,l}\HG$ (if $b=b^\bullet$ is even)  to $C(k,l)$.
\item[(2B)] For every $k,l\in\N_0$ even, $k+l\le k_0$, $l\ge 2$ and every $\HG=(H,{\bf a},{\bf b})\in C(k,2)$ such that $b_1^\bullet\neq b_2^\bullet$ and both are either (a) odd or (b) even add either (a) ${\bf M}^{2,l}\HG$ or (b) ${\bf X}^{2,l}\HG$ to $C(k,l)$.
\item[(3)] Add into $C$ also all rotations of the bilabelled graphs obtained in (2A) and (2B).
\item[(4)] Repeat (2A)--(2B)--(3) until no new graphs appear.
\item[(5)] Add $\MG^{0,2}$, $\MG^{1,1}$, $\MG^{2,0}$ into $C$. Add all possible tensor products and their rotations to obtain bilabelled graphs $\KG$ with unconnected $K$.
\end{enumerate}
\end{alg}

\begin{prop}
Suppose that Algorithm \ref{A} terminates after a~finite number of steps. Then it indeed constructs all the elements of $C(k,l)$, $k+l\le k_0$, that is, bilabelled graphs satisfying (i)--(vi). In particular, this means that the sets $C(k,l)$ are finite.
\end{prop}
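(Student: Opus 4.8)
The plan is to establish the claimed identification by proving two inclusions separately—soundness, that every bilabelled graph output by the algorithm lies in $C(k,l)$, and completeness, that every element of $C(k,l)$ with $k+l\le k_0$ is produced—after which the finiteness assertion is immediate, since by hypothesis the algorithm halts and each step manufactures only finitely many graphs.

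For soundness I would check that the side conditions attached to the steps are exactly what keeps properties (i)--(vi) intact. The global invariant ``$k+l$ even'' is preserved because all generators $\MG^{k,l}$, $\XG^{k,l}$, and $\nullG$ satisfy it and composition/tensor/rotation respect it; together with the choice between $\MG$ (even vertex) and $\XG$ (odd vertex) this secures the parity of the extended degrees (ii) and the bipartiteness with all labels in the even part (iii). The thresholds $l\ge 3$ in (2A) and $l\ge 2$ in (2B) force the freshly created central vertex to have $\tilde d_v\ge 4$, hence never $d_v=\tilde d_v=2$, giving (iv); the single-output form of (2A) and the requirement $b_1^\bullet\ne b_2^\bullet$ in (2B) prevent double edges and thus secure (v); and (vi) survives because composing with a star never detaches a component from the labelled vertices. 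Planarity (i) is maintained since composing with the planar graphs $\MG^{m,l}$, $\XG^{m,l}$ along a consecutive block of outputs, followed by rotation (step (3)), keeps the enveloping cycle facial.

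For completeness, the main work, I would induct on $|V(K^\bullet)|$. First I reduce to the connected case: by Lemma \ref{L.compinC} each connected component of $\KG\in C(k,l)$ again lies in $C$, so it suffices to produce connected graphs and then reassemble an arbitrary $\KG$ as a tensor product of its components together with the pieces $\MG^{0,2},\MG^{1,1},\MG^{2,0}$ that record how the labels are distributed—precisely step (5). For connected $K$ with $|V(K^\bullet)|\le 1$ the graph is $\nullG$ or a single star $\MG^{k,l}$ or $\XG^{k,l}$, supplied by step (1). For $|V(K^\bullet)|\ge 2$, Lemma \ref{L.induction} yields a vertex $v$ of $K^\bullet$ with $d_v^\bullet\le 2$ and $K^\bullet\setminus\{v\}$ connected, and Lemma \ref{L.consec} shows $v$ occurs consecutively among $a_k^\bullet,\dots,a_1^\bullet,b_1^\bullet,\dots,b_l^\bullet$; rotating via step (3) I may assume $\mathbf{b}^\bullet=(v,\dots,v)$ and $v\notin\mathbf{a}^\bullet$. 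The inductive decomposition of Section \ref{secc.induction} then writes $\KG=\MG^{m,l}\HG$ or $\KG=\XG^{m,l}\HG$ with $m=d_v^\bullet$. Since $v$ is a non-isolated vertex of $K^\bullet$ we have $\tilde d_v\ge 4$, whence $m\in\{1,2\}$ and $l=\tilde d_v-m$ satisfies $l\ge 3$ when $m=1$ and $l\ge 2$ when $m=2$, while (v) forces the two neighbours to be distinct when $m=2$. These are exactly the branches (2A) and (2B), and the parity bookkeeping (``$k+l$ even'') places $\HG$ in $C(k,1)$ or $C(k,2)$ with the parities demanded by the algorithm.

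The step I expect to be delicate is confirming that the reduced graph $\HG$ again lies in $C$ (not merely in $\Cat$) while having strictly fewer core vertices, so that the inductive hypothesis applies. Concretely, deleting $v$ must not create a vertex with $d_w=\tilde d_w=2$, a multiple edge, or a labelless component. I would argue that each neighbour $c_i$ retains its extended degree $\ge 4$, because the edge to $v$ is replaced either by an output string (case $v$ odd) or by an edge to a fresh pendant $d_i$ (case $v$ even); that $K^\bullet\setminus\{v\}$ being connected preserves (vi); and that distinctness of the $c_i$, inherited from (v), rules out new double edges. Carefully tracking this reduction together with the rotation and parity bookkeeping is the crux; once it is in place, soundness and completeness combine to give $C(k,l)=\{\KG\in\Gat(k,l)\mid\KG\text{ satisfies (i)--(vi)}\}$, and finiteness follows from termination.
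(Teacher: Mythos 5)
Your proposal is correct and follows essentially the same route as the paper: both reduce to the connected case via Lemma \ref{L.compinC}, induct on $|V(K^\bullet)|$ using Lemma \ref{L.induction} to find a removable vertex with $d_v^\bullet\le 2$ and Lemma \ref{L.consec} to rotate it into consecutive position, and then match the resulting decomposition $\KG=\MG^{m,l}\HG$ or $\XG^{m,l}\HG$ against the side conditions of steps (2A)/(2B). The one point where you are more explicit than the paper—checking that the reduced graph $\HG$ again satisfies (iv)--(vi), not merely (i)--(iii)—is a genuine gap-filling detail and your sketch of it (extended degrees of the neighbours $c_i$ are unchanged, deletion creates no multiple edges, connectivity of $K^\bullet\setminus\{v\}$ preserves (vi)) is sound.
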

\begin{proof}
The algorithm is a~modification of Algorithm \ref{A.naive}. We just need to justify the changes.

First change is that in the steps (1)--(4) we construct only graphs $\KG\in C(k,l)$ such that $K$ is connected. This is possible thanks to Lemma \ref{L.induction} that says that to construct $\KG$ with $K$ connected, we only need to add a~vertex to a~graph $\HG$ such that $H$ is also connected. So, to obtain all connected graphs $\KG\in C(k,l)$, we do not have to consider the unconnected ones. Finally, as we mentioned in Lemma~\ref{L.compinC}, taking a~bilabelled graph $\KG$ satisfying (i)--(vi), its connected components also satisfy (i)--(vi). So, supposing the steps (1)--(4) terminate in finite time, we can obtain all unconnected elements of $C(k,l)$ just using the tensor product on the connected elements.

Second change is that, in the new algorithm, we perform the step~(2) only for $m=1$ (step~(2A)) and for $m=2$ (step~(2B)). First, notice that we do not have to consider the case $m=0$ since this would add a~new connected component to the graph. This would result in obtaining an unconnected graph unless the original one $\HG$ contains no vertex. This forces us, however, to change the starting point of the induction~(1), where it is not enough to consider just the empty graph, but we have to consider all the possible graphs with one vertex -- those are $\MG^{k,l}$ and $\XG^{k,l}$ (while we have to exclude the rotations of $\XG^{1,1}$ as they do not satisfy~(iv); the rotations of $\MG^{1,1}$ can be included at the end, because $\MG^{1,1}$ acts as identity with respect to composition). The fact that we can keep $m\le 2$ follows from Lemma \ref{L.induction}.

Now there are some changes related to the fact that we are not aiming to construct the whole sets $\Cat(k,l)$ of graph satisfying (i)--(iii), but only the subsets $C(k,l)$ satisfying in addition the conditions (iv)--(vi). In step~(2A), we cannot consider $l=1$ as $\XG^{1,1}\HG$ would violate~(iv) (and $\MG^{1,1}\HG=\HG$, so this also need not be considered), so $l\ge3$ here. In both steps (2A) and~(2B), we require $b_i^\bullet$ to be all odd when adding $\MG^{m,l}\HG$ since otherwise we would violate~(iv). In step (2B) we require $b_1^\bullet\neq b_2^\bullet$ since otherwise we would obtain a~double edge and hence violate~(v). We do not consider $l=0$ since this would violate (iv), so $l\ge 2$ here.

The condition~(vi) is satisfied automatically~-- every graph constructed in (2A)--(2B)--(3) is connected and has at least two input/output vertices. Consequently, every connected component of every graph constructed in~(5) has at least two input/output vertices.

Now the most important feature following from those changes is that the number of the input/output vertices can never decrease. It increases by $l-m$, which is non-negative since $m\le 2$, $l\ge2$. Actually the only way it can stay the same is when we choose $l=2$ in step~(2B), otherwise it strictly increases. This means that if we want to compute all $C(k,l)$ for $k+l\le k_0$, we can restrict to those spaces with $k+l\le k_0$ already in the beginning, which gives us hope to get the result in a~finite amount of steps. Restricting $m\le 2$ also means that in each step we add to $C(k,l)$ only a~finite amount of new elements. Consequently, if the algorithm finishes after a~finite amount of steps, it means that the sets $C(k,l)$ are finite.
\end{proof}

\begin{ex}
As an example, let us go through the algorithm for $k_0=8$. We will not distinguish between input and output vertices, so everything will be here ``up to rotation''. Also we will always draw the graph $K^\bullet$ instead of the full graph $K$. For clarity, we keep the distinction between even vertices by black circles~$\bcol$ and odd vertices by white circles~$\wcol$.

We initialize the algorithm by adding all the $\MG$'s and $\XG$'es, that is,
$$
\Graph{
\GS 0.5/1:1/;
\GS 0.5/0:1/;
\GS 0.5/0.5:1/0.5,1/1.5;
\GV 0.5:1;
},\quad
\Graph{
\GS 0.5/1:1/;
\GS 0.5/0:1/;
\GS 0.5/0.75:1/0.57,1/1.43;
\GS 0.5/0.25:1/0.57,1/1.43;
\GV 0.5:1;
},\quad
\Graph{
\GS 0.5/1:1/;
\GS 0.5/0:1/;
\GS 0.5/0.85:1/0.65,1/1.35;
\GS 0.5/0.15:1/0.65,1/1.35;
\GS 0.5/0.5:1/0.5,1/1.5;
\GV 0.5:1;
},\quad
\Graph{
\GE 0.5/1:1/;
\GE 0.5/0:1/;
\GE 0.5/0.5:1/0.5,1/1.5;
\GW 0.5:1;
},\quad
\Graph{
\GE 0.5/1:1/;
\GE 0.5/0:1/;
\GE 0.5/0.75:1/0.57,1/1.43;
\GE 0.5/0.25:1/0.57,1/1.43;
\GW 0.5:1;
},\quad
\Graph{
\GE 0.5/1:1/;
\GE 0.5/0:1/;
\GE 0.5/0.85:1/0.65,1/1.35;
\GE 0.5/0.15:1/0.65,1/1.35;
\GE 0.5/0.5:1/0.5,1/1.5;
\GW 0.5:1;
}.
$$

Now, step (2A) adds the following graphs
$$
\Graph{
\GS 0.5/1:1/;
\GS 0.5/0:1/;
\GS 0.5/0.5:1/0.5;
\GE 0.5/1:2/;
\GE 0.5/0:2/;
\GE 0.5/0.5:2/1,2/2.5;
\GV 0.5:1;
\GW 0.5:2;
},\quad
\Graph{
\GS 0.5/0.5:1/0.5;
\GS 0.5/0.93:1/0.75,1/1.25;
\GS 0.5/0.07:1/0.75,1/1.25;
\GE 0.5/1:2/;
\GE 0.5/0:2/;
\GE 0.5/0.5:2/1,2/2.5;
\GW 0.5:2;
\GV 0.5:1;
},\quad
\Graph{
\GS 0.5/1:1/;
\GS 0.5/0:1/;
\GS 0.5/0.5:1/0.5;
\GE 0.5/0.5:2/1,2/2.5;
\GE 0.5/0.93:2/1.75,2/2.25;
\GE 0.5/0.07:2/1.75,2/2.25;
\GV 0.5:1;
\GW 0.5:2;
}.
$$
The step (2B) adds nothing at this point. We can apply again step (2A), namely to the first graph of the last three, to obtain
$$
\Graph{
\GS 0.5/1:1/;
\GS 0.5/0:1/;
\GS 0.5/0.5:1/0.5;
\GE 0.5/0:2/1.65,2/2.35;
\GE 0.5/0.5:2/1,2/3;
\GS 0.5/1:3/;
\GS 0.5/0:3/;
\GS 0.5/0.5:3/3.5;
\GV 0.5:1,3;
\GW 0.5:2;
},\quad
\Graph{
\GS 0.5/1:1/;
\GS 0.5/0:1/;
\GS 0.5/0.5:1/0.5;
\GE 0.5/1:2/;
\GE 0.5/0:2/;
\GE 0.5/0.5:2/1,2/3;
\GS 0.5/1:3/;
\GS 0.5/0:3/;
\GS 0.5/0.5:3/3.5;
\GV 0.5:1,3;
\GW 0.5:2;
},\quad
\Graph{
\GE 0.5/1:1/;
\GE 0.5/0:1/;
\GE 0.5/0.5:1/0.5;
\GS 0.5/0:2/1.65,2/2.35;
\GE 0.5/0.5:2/1,2/3;
\GE 0.5/1:3/;
\GE 0.5/0:3/;
\GE 0.5/0.5:3/3.5;
\GW 0.5:1,3;
\GV 0.5:2;
},\quad
\Graph{
\GE 0.5/1:1/;
\GE 0.5/0:1/;
\GE 0.5/0.5:1/0.5;
\GS 0.5/1:2/;
\GS 0.5/0:2/;
\GE 0.5/0.5:2/1,2/3;
\GE 0.5/1:3/;
\GE 0.5/0:3/;
\GE 0.5/0.5:3/3.5;
\GW 0.5:1,3;
\GV 0.5:2;
}.
$$
Finally, to the first and to the third graph of the last results, we can apply (2B) and in both cases we get the same result
$$
\Graph{
\GE 1.5/2:2/1.5,2/2.5;
\GE 0.5/1.5:1/2,3/2;
\GS 0.5/1:1/0.5,3/3.5;
\GS 0.5/0:1/0.5,3/3.5;
\GE -0.5/0.5:2/1,2/3;
\GE -0.5/-1:2/1.5,2/2.5;
\GW 1.5:2;
\GV 0.5:1,3;
\GW -0.5:2;
}.
$$
\end{ex}

\subsection{Proof of Theorem \ref{T.Cfin}}
\label{secc.proof}
In this subsection, we are going to show that Algorithm \ref{A} terminates and hence the sets $C(k,l)$ are finite. In order to do so, we can actually ignore the structure of the graphs and focus only on the vectors of input/output vertices $(\mathbf{a},\mathbf{b})$.

Let $\Sigma$ be a~countable alphabet partitioned into even and odd part $\Sigma=\Sigma_0\cup \Sigma_1$ (both parts countable, mutually disjoint). We define a~language (i.e.\ a~set of words) $L\subset\Sigma^*$ as follows. For any $a\in\Sigma$ and $k\in 2\N\setminus\{2\}$, we put $a^k\in L$. In addition, we define the following two families of production rules.

\begin{enumerate}
\item[(A)] Substitute any letter of a~word $w$ by $l$ copies, $l\in\{3,5,7,\dots\}$, of a~new one that has opposite parity and does not occur in $w$.
\item[(B)] Suppose there are two consecutive letters (alternatively the first and the last letter) in $w$ that are not equal, but have the same parity. Then substitute them by $l$ copies, $l\in\{2,4,6,\dots\}$, of a~new letter that has opposite parity and does not occur in $w$.
\end{enumerate}

The following example illustrates the derivation of some words in $L$. We consider lower case letters to be even and upper case letters to be odd.
$$\mathsf{aaaa}\xrightarrow[l=3]{\text{(A)}}
\mathsf{BBBaaa}\xrightarrow[l=5]{\text{(A)}}
\mathsf{BBBCCCCCaa}\xrightarrow[l=2]{\text{(B)}}
\mathsf{BBddCCCCaa}$$

Note that all the production rules make the given word longer except for (B) with $l=2$, which preserves the length. We say that a~word $w\in\Sigma^*$ is \emph{infinitely iterable} if one can iterate rule (B) with $l=2$ forever.

There are surely words, where one can never apply rule (B). For example, $\mathsf{aaaa}$ since all the letters are the same or $\mathsf{aBcD}$ since the parity alternates. It is not hard to think of examples, where one can use it only once such as $\mathsf{abbb}$. On the other hand, there are examples of words $w\in\Sigma^*$, where one can do the iteration forever such as
$$\mathsf{aabb}\to\mathsf{aCCb}\to\mathsf{DCCD}\to\mathsf{aaCD}\to\mathsf{aabb}\to\dots$$

We will say that a~word $w$ is \emph{strongly infinitely iterable} (s.i.i.) if one can iterate rule (B) for $l=2$ forever always taking two neighbouring letters and never the first and last one. If a~word is not infinitely iterable (i.i.), then it is not s.i.i. If a~word is not s.i.i., then none of its subwords is s.i.i.

\begin{lem}
\label{L.infit}
There is no infinitely iterable word in $L$.
\end{lem}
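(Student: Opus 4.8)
The plan is to argue by induction on the length of a derivation of a word in $L$, after first recording why no argument local to a single word can succeed.

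First I would note the relevant closure properties. Rule (B) with $l=2$ replaces two letters by two letters, so it preserves length and, being itself an instance of production rule (B), maps $L$ into $L$; thus the whole orbit of an infinitely iterable word stays in $L$ at a fixed length $n$. Since a word of length $n$ is determined up to renaming of letters by the partition of its $n$ positions into equal-letter classes together with the parity of each class — of which there are only finitely many — an infinite iteration must repeat a shape, so infinite iterability is the same as lying on a cycle of rule-(B)-with-$l=2$ moves. This already shows that a monovariant depending only on the word cannot exist: the displayed cycle $\mathsf{aabb}\to\mathsf{aCCb}\to\mathsf{DCCD}\to\mathsf{aaCD}\to\mathsf{aabb}$ would be forbidden by any quantity strictly decreasing under every move. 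The proof must therefore use membership in $L$, i.e.\ reachability from a generator $a^k$; note for instance that $\mathsf{aabb}\notin L$, since the only length-$4$ words of $L$ are the generators $x^4$, to which rule (B) does not apply.

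Accordingly I would induct on the number of production steps in a shortest derivation of $w\in L$. The base case is $w=a^k$: all letters coincide, rule (B) never applies, and $w$ is not i.i. For the inductive step, let $w$ be produced from a predecessor $w'\in L$ (of strictly shorter derivation, hence not i.i.\ by the inductive hypothesis) by a rule that inserts a maximal run $c^{m}$ of a single fresh letter $c$ in place of one letter (rule (A), $m\ge 3$) or of two adjacent letters (rule (B), $m\ge 2$); write $w=u\,c^{m}\,v$ and $w'=u\,w_0\,v$ for the corresponding short factor $w_0$. Suppose toward a contradiction that $w$ is i.i.\ and fix an infinite sequence of rule-(B)-with-$l=2$ moves. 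Because $c^{m}$ is monochromatic, no move acts inside it, and in the language of shapes the class of $c$ can only lose positions, never gain them; hence at most $m$ moves of the whole sequence ever touch the run. Consequently infinitely many moves avoid the run, and by the pigeonhole principle infinitely many of them lie entirely inside the factor $u$ or entirely inside $v$; their adjacent pairs are interior to that factor, so that factor is strongly infinitely iterable. Since it is a subword of $w'$ as well, the stated fact that strong infinite iterability of a subword propagates to the containing word gives that $w'$ is s.i.i., hence i.i., contradicting the inductive hypothesis.

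The hard part is the localization in the previous paragraph: erosion of the run from its two ends creates fresh letters and so perturbs the junctions between $u$, $c^{m}$ and $v$, and one must guarantee that the infinitely many surviving moves can be confined to a portion of $u$ or $v$ that is genuinely a common subword of $w$ and $w'$, i.e.\ lies away from this bounded ``erosion debris''. Controlling this is exactly where the parity bookkeeping enters — the fresh run $c^{m}$ has parity opposite to the letters it replaced, and $m$ has the fixed parity dictated by the rule, which is what blocks re-erosion once an end has been consumed and keeps the two sides from communicating through the run. Making the factor/subword correspondence precise under the cyclic reading of rule (B), and checking the small base cases where rule (B) either fails to apply or leads at once to a parity-alternating word admitting no further move, will constitute the remaining bookkeeping.
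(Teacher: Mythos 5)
Your overall strategy (induct along the derivation, reduce infinite iterability of $w$ to strong infinite iterability of a piece of the predecessor) matches the paper's, and your preliminary observations are correct: infinite iterability is the same as lying on a cycle of shapes, no monovariant local to a single word can work, and the only length-$4$ word of $L$ is the generator $a^4$. But the step you flag as ``the hard part'' is not residual bookkeeping --- it is the entire content of the lemma, and the inference you build on it does not go through as stated. From ``at most $m$ moves consume letters of the run'' and pigeonhole you conclude that some factor $u$ or $v$ is strongly infinitely iterable; this fails because junction moves relabel the boundary positions of $u$ and of $v$ with fresh letters, and repeated junction moves can erode arbitrarily far into $u$ and into $v$ (each boundary attack creates new letters that can then pair with the next letter inward). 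Consequently the restriction of the move sequence to the position range of $u$ is not an iteration sequence of the word $u$, and there is no fixed common subword of $w$ and $w'$ on which infinitely many moves are guaranteed to act. Quantifying exactly how far this erosion can proceed is the whole difficulty.

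The paper closes this gap with a termination-by-erosion argument rather than a localization. Writing $w=aaav$ with the run created by the last length-increasing production, it first produces a strictly \emph{shorter} word $bv\in L$ (shrink the run by two, applying the same rule with parameter $l-2$, or undo it when $l=3$); by the induction hypothesis $bv$ is not i.i., hence the suffix $v$ is not s.i.i. Therefore any infinite iteration of $w$ must eventually attack the boundary of the run; each such attack deletes exactly one letter from the current suffix and leaves a suffix that is again not s.i.i., so the attacks recur until the suffix is exhausted. One is then left with a word $a_0a_0a_1a_2\cdots a_{n-1}a_na_n$ whose letters alternate in parity; since every word of $L$ has even length, $a_0$ and $a_n$ have different parities, so rule (B) can no longer be applied --- the desired contradiction. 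Your induction set-up is compatible with this argument, but without it (or a genuine substitute for the localization) the proof is incomplete.
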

\begin{proof}
We will prove it by induction. For the sake of contradiction, suppose that $w\in L$ is i.i., but any shorter $w'\in L$ is not. Note that the words $a^k$, $k\in 2\N\setminus\{2\}$ are not i.i.; moreover, one cannot apply the rule (B) on them at all. Hence, without loss of generality, we can assume that $w$ was made by some production rule from some other word $w'$ that is not i.i. Equivalently, this means that $w'$ is strictly shorter and $w$ was not made by applying (B), $l=2$ (otherwise $w'$ would necessarily be i.i.).

Consequently, $w$ contains at least three identical consecutive letters. Without lost of generality, assume $w=aaav$, $a\in\Sigma_0$, $v\in\Sigma^*$, where those letters $a$ were made by the last operation. Undoing the last operation (and maybe doing some different one), we see that we have also $w'=bv\in L$ for some letter $b\in\Sigma$ (possibly different from $a$). Since $w'$ is shorter, it cannot be i.i.\ and hence $v$ is not s.i.i.

Since we assume that $w$ is i.i., we can denote by $(w_i)_{i\in\N}$ the corresponding sequence of iterations. Since $v$ is not s.i.i., there must be an index $i_1\in\N$ such that $w_{i_1-1}=aaav_{i_1-1}$ but $w_{i_1}=aaBBv_i$ or $w_{i_1}=Baav_iB$, where $B\in\Sigma_1$ and $v_i$ was made from $v_{i_1-1}$ by deleting the first resp.\ last letter. Since $v_{i_1-1}$ was made by some strong iterations from $v$ and $v_{i_1}$ is its subword, it also cannot be s.i.i. Hence, we can repeat the argument.

Choosing for example the first possibility, we must then reach some $i_2\in\N$ such that $w_{i_2-1}=aaBBv_{i_2-1}$, but $w_{i_2}=aaBccv_{i_2}$ or $w_{i_2}=CaBBv_{i_2}C$, where $c\in\Sigma_0$, $C\in\Sigma_1$. Again, $v_{i_2}$ cannot be s.s.i., so we can iterate this procedure.

Eventually, we end up with some $i_n\in\N$ such that $v_{i_n}$ is empty and, up to rotation, we have $w_{i_n}=a_0a_0a_1a_2\cdots a_{n-1}a_na_n$ such that the parity of the $a_i$'s alternates. Obviously all words of $L$ are of even length, so also $w_{i_n}$ is. Consequently, the parity of $a_0$ and $a_n$ is also different. But this means that one can no more apply the rule (B) on $w_{i_n}$. This is a~contradiction.
\end{proof}

\begin{proof}[Proof of Theorem \ref{T.Cfin}]
We are going to show that Algorithm \ref{A} terminates. Because of the restriction $k+l\le k_0$, the only way how the algorithm may not terminate is that there is some bilabelled graph $\KG\in C(k,l)$ such that one can iterate the step (2B) with $l=2$ on an appropriate rotation of the corresponding graph infinitely many times as this is the only possibility how to preserve the number of input/output vertices.

The rules (A) and (B) for the language $L$ exactly correspond to steps (2A) and (2B) of Algorithm \ref{A}. Hence (if we choose the alphabet $\Sigma$ appropriately), for any connected graph $\KG=(K,\mathbf{a},\mathbf{b})\in C(k,l)$ (that is any graph that is obtained in steps (1)--(4) of the algorithm), we have $w:=a_k\cdots a_1b_1\cdots b_l\in L$. If now one could iterate the step (2B) with $l=2$ on $\KG$ ad infinitum, it would mean that we can iterate step (B) with $l=2$ on the word $w$, but this is impossible as we just proved in Lemma~\ref{L.infit}
\end{proof}

\section{Concluding remarks and open problems}
\label{sec.open}

Let us mention some additional observations and concluding remarks here. In particular, we believe that this work opens wide possibilities for further research, so we mention a number of open problems in this section.

\subsection{Defining a~category structure on $C$}
\label{sec.Ccat}

We defined the collection of sets $C(k,l)$ by restricting to special graphs in $\Cat(k,l)$ in order to obtain sets that are finite while keeping the image under the functor $T^{T_\tau}$. Much more natural, however, is not to look for subsets of $\Cat(k,l)$, but for quotient sets. This allows us to keep the category structure.

\begin{defn}
We define an equivalence relation on the sets $\Cat(k,l)$ by taking the symmetric and transitive closure of the following. Graph $\KG$ is equivalent to $\KG'$ if $\KG'$ was made by (iv) a~two-path contraction, (v) by erasing exactly two edges between given two vertices or (vi) by erasing an isolated vertex that is not an input/output vertex. The quotient sets are denoted by $\bar\Cat(k,l)$ and form again a~category.
\end{defn}

\begin{rem}
As follows from Proposition \ref{P.FCCat}, the functors $\F_\tau$ and $T^{T_\tau}$ do not pass to $\bar\Cat(k,l)$. If $\KG'$ was made from $\KG$ by erasing a~pair of edges between two vertices, then we have $\F_\tau\KG=1/4\F_\tau\KG'$. If $\KG'$ was made from $\KG$ by erasing an isolated vertex of $K^\circ$, then $\F_\tau\KG=4\F_\tau\KG'$. There are two ways how to deal with this.

First possibility is to define $T^{T_\tau}$ on $\bar\Cat(k,l)$ by fixing some representative -- namely we can take the smallest one, which is a~graph from $C(k,l)$. Then $T^{T_\tau}$ is well defined, but it is not a~functor. The functorial property holds only up to a~multiplicative constant (similarly as in the case of Banica--Speicher easy quantum groups, cf.~\cite[Prop.~1.9]{BS09}). Second possibility is to introduce a~linear structure on $\Cat$ as in the following definition.
\end{rem}

\begin{defn}
Let $\Cat\nlin(k,l)$ be the vector space of formal linear combinations of elements in $\Cat(k,l)$. We extend the category operations to $\Cat\nlin(k,l)$ to define a~linear category. We define $\bar\Cat\nlin(k,l)$ to be a~quotient vector space of $\Cat\nlin(k,l)$ with respect to the relations
\begin{enumerate}\setcounter{enumi}{3}
\renewcommand{\theenumi}{\roman{enumi}}
\item $\KG=\KG'$ if $\KG'$ was made from $\KG$ by a~two-path contraction,
\item $\KG=1/N\,\KG'$ if $\KG'$ was made from $\KG$ by erasing a~pair of edges between two vertices
\item $\KG=N\,\KG'$ if $\KG'$ was made from $\KG$ by erasing a~vertex $v$ with $\tilde d_v=0$.
\end{enumerate}
\end{defn}

This defines an interesting diagrammatic category, which is maybe worth studying further. Let us suggest some questions in the following sections.

\subsection{Looking for a fibre functor for $\bar\Cat\nlin$} The mapping $T^{T_{\tau}}$ defines a fibre functor (i.e.\ a functor to the category of matrices) on $\bar\Cat\nlin$ for $N=4$. But the definition of $\bar\Cat\nlin$ works essentially for any complex number $N$. Therefore, we have the following natural question.

\begin{quest}
Are there some other fibre functors for $\bar\Cat\nlin$? (Considering maybe $N\in\{5,6,7,\dots\}$.)
\end{quest}

We are essentially looking for some pair of tensors interpreting the even and the odd vertices. Note an important fact that thanks to Proposition~\ref{P.unique} these tensors do not have to be permutation invariant in their indices.

\subsection{Functor injectivity, semisimplicity}

We were able to find finite subsets $C(k,l)\subset\Cat(k,l)$ describing the quantum group $D_4^+$. The natural question is now, whether those are the minimal subsets.

\begin{quest}
Is the set of intertwiners $\{T^{T_\tau}_\KG\mid\KG\in C(0,k)\}$ linearly independent for every $k\in\N_0$? Equivalently, is the functor $T^{T_\tau}$ injective on $\bar\Cat\flin$? That is, is the category $\bar\Cat\flin$ isomorphic to $\Cat_{D_4^+}$.
\end{quest}

It is known that such questions can actually be characterized within the diagram category itself without knowing the particular form of the corresponding functor. Hence, we can ask the same question for $\bar\Cat\nlin$ for arbitrary $N$ even though we do not know any fibre functor here yet:

\begin{quest}
Fix any $N\in\C$. Is the bilinear form $(\KG,\HG)\mapsto \KG^*\HG$, where $\KG,\HG\in\bar\Cat\nlin(0,k)$, non-degenerate for every $k\in\N_0$?
\end{quest}

Note also that this is equivalent to the Karoubi envelope of the corresponding category being semisimple. See e.g.\ \cite{Jun19,FM20} for more detailed discussion of such questions in case of partition categories.

\subsection{Law of characters and category isomorphisms}

We were  able to prove that the sets $C(k,l)$ are finite and we are able to generate them. However, we are not able to count them. This is also an important question:

\begin{quest}
Compute the number of elements $\# C(k,l)=\dim\bar\Cat\nlin(k,l)$.
\end{quest}

Note that if the functor $T^{T_\tau}$ is injective, answering this question gives us the dimensions of the fixed point spaces $\Cat_{D_4^+}(0,k)=\Mor(1,u^{\otimes k})$, which is a very important quantity. A related question then is to determine the law of the character $\chi=\sum_{i}u_{ii}$, see \cite{BS09,Wor87}.

Computing these numbers for small $k$ actually leads to a very exciting conjecture:

\begin{conj}
We have $\#C(0,2k)=C_k^2$, where $C_k$ are the Catalan numbers.
\end{conj}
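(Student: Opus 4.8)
The plan is to prove the conjecture in its purely combinatorial form, by exhibiting an explicit bijection between $C(0,2k)$ and ordered pairs of non-crossing structures on the $2k$ output points, each member of a pair ranging over a Catalan-counted set of size $C_k$ \emph{independently} of the other; this forces $\#C(0,2k)=C_k\cdot C_k=C_k^2$. A preliminary remark steers me toward a bijective rather than a generating-function argument: the ordinary generating function $\sum_k C_k^2\,x^k$ is transcendental (of elliptic type), so it cannot satisfy a Catalan-style algebraic functional equation. Any proof must therefore make the product (independence) structure manifest, which is exactly what a bijection onto a Cartesian product of two Catalan sets accomplishes; a single self-referential recursion, of the sort one would extract naively from Algorithm \ref{A}, is doomed to miss it.

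First I would reduce from $\KG$ to its reduced graph $K^\bullet$ and restate conditions (i)--(vi) in their cleanest disk form: $K^\bullet$ is a simple, planar, bipartite graph drawn in a disk with the $2k$ output legs attached, in cyclic order, to vertices of the even part, with $\tilde d_v$ even at every vertex, no vertex satisfying $d_v=\tilde d_v=2$, and every component meeting the boundary. The structural observation I want to exploit is that such graphs carry two superimposed $2$-colorings: the vertices are $2$-colored because $K^\bullet$ is bipartite (condition (iii)), while the evenness of all extended degrees (condition (ii), i.e.\ $K^\circ$ is Eulerian away from the enveloping cycle) is the hallmark that makes the face structure of the embedding in the disk $2$-colorable. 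I expect these two colorings to be the source of the two Catalan factors: one non-crossing partition of the boundary recorded through the \emph{even} colour class (how the legs are grouped via the even vertices) and a second non-crossing partition recorded through the \emph{odd} colour class (equivalently, traced along the boundaries of one face-colour). Planarity of $\KG$ forces both partitions to be non-crossing.

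The heart of the argument is then to show that $\KG\mapsto(\pi,\sigma)$ is a bijection onto the appropriate Cartesian product. Injectivity amounts to reconstructing $K^\bullet$ from the overlay of the two non-crossing diagrams, and surjectivity to realizing every admissible pair by a genuine element of $C(0,2k)$. I would run both directions by induction on $k$ using the inductive description of $\Cat$ already established: Lemma \ref{L.induction} supplies a boundary-incident vertex of small degree whose removal peels off the outermost block of one of the two non-crossing structures, and the vertex-addition steps of Algorithm \ref{A} (the $\MG^{m,l}$ and $\XG^{m,l}$ steps, for even and odd vertices respectively) reinsert exactly such a block. The point is to arrange the peeling so that the induction runs \emph{in parallel and without interference} on the two factors, which is what will deliver a product and not merely a bound. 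I would calibrate the bijection against the smallest cases to pin down the conventions: $k=1$ gives the single graph $\MG^{0,2}$ and $1=C_1^2$, while $k=2$ gives the four graphs $\MG^{0,4}$, the two nested/parallel even pairings, and $\XG^{0,4}$, matching $4=C_2^2$.

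The main obstacle is the surjectivity half together with the genuine \emph{independence} of the two factors. Conditions (iv) no $(2,2)$-vertex, (v) no multiple edges, and (vi) every component meets the boundary are precisely the non-degeneracy constraints that prune the a~priori far larger family of planar bipartite Eulerian graphs in the disk; the delicate claim to verify is that after this pruning the even-vertex data $\pi$ and the odd/face data $\sigma$ may be prescribed \emph{freely of one another}, with no residual compatibility relation linking them. Establishing this decoupling — and handling the degenerate cases where $K^\bullet$ collapses to a single vertex, so that the two structures live entirely in the leg-attachment pattern rather than in honest faces — is where I expect the real work to lie, and it is exactly the step that distinguishes the answer $C_k^2$ from some constrained single-Catalan count.
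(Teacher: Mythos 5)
This statement is an open conjecture in the paper, not a theorem: the author explicitly writes that the candidate correspondence with $NC_2\times NC_2$ (sending $\Gfour\mapsto\Labba\times\Laabb$ and $\GWfour\mapsto\Laabb\times\Labba$) ``behaves well for small $k$'' but that he ``was not able to prove this in full generality.'' Your proposal is, in substance, the same strategy the paper itself suggests in the subsequent conjecture ($C\cong NC_2\times NC_2$): extract two non-crossing structures from a bilabelled graph in $C(0,2k)$, one from the even colour class and one from the odd/face data, and argue that the assignment is a bijection onto a Cartesian product of two Catalan-counted sets. What you have written is a plan, not a proof.

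The genuine gap is that the central object is never constructed. You do not give a precise definition of the map $\KG\mapsto(\pi,\sigma)$ --- which non-crossing pair partition, exactly, is read off from the even vertices, and which from the odd vertices or the $2$-coloured faces --- and without that definition neither injectivity nor surjectivity can even be stated, let alone verified. The two hard claims, namely that $K^\bullet$ can be reconstructed from the overlay of $\pi$ and $\sigma$, and that conditions (iv)--(vi) prune the family of planar bipartite disk graphs down to \emph{exactly} the pairs $(\pi,\sigma)$ with no residual compatibility constraint, are precisely the points you flag as ``where I expect the real work to lie.'' That is the entirety of the problem: the count $C_k^2$ as opposed to some constrained subfamily hinges on the independence of the two factors, and nothing in the proposal establishes it. The inductive peeling via Lemma \ref{L.induction} and Algorithm \ref{A} removes one vertex at a time and a priori entangles the two structures; showing that the recursion ``runs in parallel and without interference'' is an assertion of the conclusion, not an argument for it. Checking $k=1$ and $k=2$ (and your heuristic about the transcendence of $\sum_k C_k^2x^k$) is consistent with the conjecture but carries no proof content. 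The statement remains open.
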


Note that Catalan numbers are counting the non-crossing partitions as well as the non-crossing pair partitions. More precisely,
$$C_k=\# NC(0,k)=\# NC_2(0,2k),$$
where $NC$ is the category of all non-crossing partitions and $NC_2$ is the category of all non-crossing pair partitions (in the Banica--Speicher sense, without considering the linear structure). This leads to another even more exciting conjecture:

\begin{conj}
The category $C$ is isomorphic to the product category $NC_2\times NC_2$.
\end{conj}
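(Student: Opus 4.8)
The plan is to prove the equivalent and slightly stronger statement that the quotient category $\bar\Cat$ is isomorphic to $NC_2\times NC_2$, by constructing an explicit bijection $\Theta\colon C(k,l)\to NC_2(k,l)\times NC_2(k,l)$ for all $k,l$ and checking that it intertwines tensor product, composition and involution. Since $\#\bigl(NC_2(0,2k)\times NC_2(0,2k)\bigr)=C_k^2$, this would also settle the preceding counting conjecture as a corollary. The two factors should come from the two binary structures carried by any $\KG=(K,\mathbf a,\mathbf b)\in C$: condition (iii) supplies the vertex bipartition into \emph{even} and \emph{odd} vertices, while condition (ii) guarantees that at each vertex the incident ends (graph edges together with boundary strings) can be paired up consecutively. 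Planarity then organises these local pairings into two distinguished global non-crossing resolutions, and I would read off a pairing of the $k+l$ boundary points (always an even number here) from each.

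Concretely, draw $\KG$ planarly as in Section~\ref{secc.graphcat}; at a vertex of extended degree $2d$ the incident ends alternate between the two colours of the complementary regions, and joining consecutive ends across one colour is a local non-crossing resolution. I would set $\Theta(\KG)=(p_1,p_2)$, where the vertex bipartition of (iii) dictates, at each vertex, which of the two local resolutions belongs to $p_1$ and which to $p_2$; in either case the curves left in the disc form closed loops together with non-crossing arcs between boundary points, and the arcs give an element of $NC_2(k,l)$. One checks on the building blocks that this does the right thing: the edgeless pair-partition graph $\KG_p$ is sent to the diagonal pair $(p,p)$, whereas the single even vertex $\MG^{0,4}$ and the single odd vertex $\XG^{0,4}$ are sent to a pair and its transpose, so that the generators together produce all of $NC_2\times NC_2$. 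The candidate inverse $\Psi$ is the overlay construction: place $p_1$ and $p_2$ in canonical non-crossing position on the same boundary points, take the transverse intersections as the internal (four-valent, hence even-degree) vertices, and read off the even/odd label of each such vertex from the handedness of the crossing.

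Functoriality is the second half. Tensor product and involution are immediate, as putting graphs side by side, resp.\ reflecting them, corresponds to the componentwise product, resp.\ reflection, of matchings. Composition is the substantial point: stacking $\HG$ above $\KG$ and contracting the shared boundary should match the componentwise Temperley--Lieb composition $(p_1,p_2)(q_1,q_2)=(p_1q_1,p_2q_2)$, and the loops created in the middle must yield precisely the scalar prescribed by the relations (iv)--(vi) of $\bar\Cat$. Reconciling the weights is exactly the identity $N=\delta^2$ -- a loop in one factor counting as $\delta$ and a boundaryless component counting as $N$ -- which for the functor $T^{T_\tau}$ is the numerical coincidence $4=2^2$ behind $\C^4\cong\C^2\otimes\C^2$. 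This is why the natural home of the statement is the linear category $\bar\Cat\nlin$ with loop weight $N=\delta^2$, where the degenerate configurations (where $p_1$ and $p_2$ locally share an arc, which in $C$ would force a forbidden degree-two vertex or a double edge) are absorbed by the relations.

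The hard part will be showing that $\Theta$ and $\Psi$ are mutually inverse, i.e.\ that overlaying two non-crossing matchings always reconstructs a bilabelled graph satisfying (i)--(vi) and that no information is lost in the smoothing. I expect to prove this by induction on $|V(K^\bullet)|$ running in lockstep with Algorithm~\ref{A}: the production rules (A) and (B) of the language $L$ from the proof of Theorem~\ref{T.Cfin} correspond exactly to inserting a new nested or parallel pair of arcs into the two matchings, so the bookkeeping that established finiteness of $C(k,l)$ should also show that the recursion produces each pair $(p_1,p_2)$ exactly once. The genuinely delicate points are the degenerate (tangential) crossings and the consistency of the crossing-handedness $2$-colouring with the global vertex bipartition; handling these uniformly, rather than through a proliferation of special cases, is where I anticipate the real difficulty. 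As a bonus, a clean such bijection would simultaneously give faithfulness of $T^{T_\tau}$ and thereby answer the earlier questions on injectivity and semisimplicity.
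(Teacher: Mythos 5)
The statement you are proving is one of the paper's open conjectures: the paper offers no proof, only the observation that any such isomorphism must send $\MG^{0,4}$ and $\XG^{0,4}$ to a non-crossing pair partition of four points and its ``transpose'' (nested versus parallel), together with the remark that the author ``checked on some examples'' but ``was not able to prove this in full generality.'' Your overlay/resolution strategy is exactly the approach the paper's hint points towards, so there is no divergence of method to report --- but your text is a plan rather than a proof, and the places where you defer the work are precisely the places where the conjecture is open.

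Concretely, three things are asserted but not established. First, $\Theta$ is not shown to be well defined: resolving each vertex of $K^\bullet$ locally produces, in general, closed loops as well as boundary arcs in each factor, and you would need to prove (presumably from conditions (iv)--(vi) and Lemma~\ref{L.induction}) that for $\KG\in C(k,l)$ no closed loop ever appears in either resolution --- otherwise discarding loops destroys injectivity in the unlinearized category $C$, where no scalar is available to record them. Second, $\Psi$ is not well defined as stated: two non-crossing pair partitions have no canonical transverse position, the number of intersections of a given pair of arcs is not an isotopy invariant, the overlay only produces four-valent crossings (so the vertices of higher extended degree in $C$, e.g.\ $\MG^{0,6}$, must arise from a normal form merging several crossings, which you do not specify), and unoriented transverse crossings carry no intrinsic ``handedness'' from which to read the parity of (iii). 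Verifying that the overlay satisfies (i)--(vi) and that $\Psi\circ\Theta=\id$, $\Theta\circ\Psi=\id$ is the entire content of the conjecture, and the proposed induction along Algorithm~\ref{A} is only named, not carried out --- note also that rule (B) merges \emph{two distinct} letters of equal parity, which is not literally ``inserting a nested or parallel pair of arcs,'' so the claimed lockstep needs an actual dictionary. Third, compatibility of composition with the relations (iv)--(vi) of $\bar\Cat\nlin$ at $N=\delta^2$ is a computation you gesture at but do not perform. None of this makes the approach wrong --- it is the natural one, and your observation that a proof would also yield $\#C(0,2k)=C_k^2$ and bear on faithfulness of $T^{T_\tau}$ matches the paper's own remarks --- but as written the proposal does not close the gap the paper leaves open.
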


Here, we consider the graph categories and partition categories without the linear structure. Generalizing this to the linear case should be straightforward, one just needs to tune the loop parameters $N$ appropriately.

Note that if such an isomorphism exists, then it has to map
$$\Gfour\mapsto\Labba\times\Laabb,\qquad\GWfour\mapsto\Laabb\times\Labba$$
or the other way around (up to scaling in the linear case). We checked on some examples that such a mapping indeed behaves well for small $k$, but we were not able to prove this in full generality.

Cartesian products of partition categories are studied in \cite{CW16}. An interesting fact is that the category $C$ is by definition generated by $\Gfour$ and $\GWfour$; in contrast, it is an open problem whether $NC_2\times NC_2$ is generated by $\Labba\times\Laabb$ and $\Laabb\times\Labba$.

Solving this conjecture would then help solving the problems that we presented before -- it would provide us a new fibre functor for $\bar\Cat\nlin$ and it would solve the problem about semisiplicity since the bilinear form associated to $NC_2$ is well understood.

\subsection{Questions concerning (free) Coxeter $D_N$}

There are also many open questions regarding our original motivation -- finding a~free analogue for Coxeter groups of type $D$. An obvious question is whether we can do the same also for different~$N$.

\begin{quest}
Is there a free analogue of Coxeter groups $D_N$ for $N>4$?
\end{quest}

One possible approach to answer this question might be to take the partition category associated to $D_N$, that is, $\Cat:=\langle\widehat{\singleton^{\otimes N}}\rangle\nlin$ and compute $\Cat^+:=\Cat\cap NC$. Then we can ask whether $\Cat^+$ is a non-crossing version of $\Cat$. That is, whether $\langle\Cat^+,\crosspart\rangle\nlin=\Cat$. This is not clear now as we are not able to compute this category explicitly.

\begin{quest}
\label{Q.PcatN}
Find an explicit description for the category $\langle\widehat{\singleton^{\otimes N}}\rangle\nlin$ for arbitrary $N\in\N$.
\end{quest}

We may also study the quantum group $D_4^+$ itself. Another question might be how meaningful this quantum group is as a~free analogue of $D_4$. Does it naturally appear in some applications? For instance, we can ask the following.

\begin{quest}
Is there a~finite graph $\Gamma$ such that its quantum automorphism group is isomorphic to $D_4^+$?
\end{quest}

\begin{rem}
The Coxeter groups of type $D$ of rank $N$ are symmetry groups of the so-called $N$-dimensional \emph{demihypercube} (created from the ordinary hypercube by ``taking only the odd vertices''). A natural candidate for the graph $\Gamma$ from the question above should be the 4-demihypercube then. However, there is an exception exactly for $N=4$. The 4-demihypercube is actually the complement of a graph consisting of four isolated segments. Hence, its automorphism group is the hyperoctahedral group $H_4=\Z_2\wr S_4$ and the quantum automorphism group is the free hyperoctahedral quantum group $H_4^+=\Z_2\wr_*S_4^+$. (Here $\wr_*$ denotes the \emph{free wreath product}, which describes the quantum automorphim group of $n$ copies of a given graph \cite{Bic04}.)

In addition, note that $D_N$ is also the automorphism group of the so-called $N$-dimensional \emph{folded hypercube}, but again only for $N>4$ \cite{Mir16}. ($N$-dimensional folded hypercube is created from the ordinary $N$-hypercube by identifying opposite vertices or, alternatively, from $(N-1)$-hypercube by connecting opposite vertices). The quantum automorphism group of the folded hypercube is $SO_N^{-1}$ for $N>4$ odd \cite{Sch20}. Using the same proof technique, one can actually obtain the same result also for the $N$-demihypercube. For $N=4$, the folded hypercube graph coincides with the full bipartite graph $K_{4,4}$, which is the complement of $K_4\sqcup K_4$. Hence, its automorphism group is $S_4\wr\Z_2$ and its quantum automorphism group is $S_4^+\wr_*\Z_2$.
\end{rem}

\appendix

\section{The way of drawing the elements of $C$ is unique}

As a side remark, we would like to mention a result that does not tell us much about the category $\Cat_{D_4^+}$, but might be useful when studying some of the open problems sketched in Section~\ref{sec.open}.

We prove namely that the way of drawing the bilabelled graphs in $C$ is unique. This may be good to know when studying $\bar\Cat\nlin$ as an abstract category and, in particular, when looking for its fibre functors. 

\begin{prop}
\label{P.unique}
For any $\KG=(K,\mathbf{a},\mathbf{b})\in C(k,l)$, there is a~unique planar drawing of $\KG$. More precisely, there is (up to orientation and choice of the unbounded face) unique planar embedding of the graph $K^\odot$ and the orientation is fixed by fixing the orientation of the enveloping cycle.
\end{prop}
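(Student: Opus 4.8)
The plan is to deduce uniqueness of the drawing from Whitney's theorem, which asserts that a $3$-connected simple planar graph has a unique embedding into the sphere up to reflection (equivalently, its set of facial cycles is determined). The graph $K^\odot$ was built precisely to make this applicable: the apex vertex rigidifies the enveloping cycle. Two reductions come first. For disconnected $K$ we have $\KG=\KG_1\otimes\cdots\otimes\KG_m$; the partition of the boundary points into components is part of the datum $(\mathbf a,\mathbf b)$, so the cyclic arrangement of the components along the unbounded face is forced, and it suffices to treat each connected $\KG_i$. Secondly, using $\deg_{(K^\bullet)^\odot}v=\tilde d_v$ for $v\in V(K^\bullet)$ together with~(ii) and~(iv), the only vertices of $K^\odot$ of degree $2$ (for $k+l\ge 3$) are the pendant input/output vertices (those with $d_v=1$, $\tilde d_v=2$), and each of them merely subdivides the spoke $a_i^\bullet\alpha_i$. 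Hence $K^\odot$ is a subdivision of $(K^\bullet)^\odot$, and since subdividing edges does not change the set of embeddings, it is enough to prove the claim for $(K^\bullet)^\odot$. The finitely many degenerate cases $k+l\le 2$, where the cycle and apex are too small, I would check by hand.

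The core step is to show that, for connected $K\in C$ with $k+l\ge 3$, the graph $G:=(K^\bullet)^\odot$ is simple and $3$-connected. Simplicity follows from~(v) together with the fact that distinct spokes land on distinct cycle vertices. For $3$-connectivity I would rule out $1$- and $2$-cuts. The wheel formed by the apex and the enveloping cycle is itself $3$-connected, so any small cut must trap a set $S$ of essential vertices of $K^\bullet$; condition~(vi) guarantees such a piece is attached to the cycle. The key quantitative tool is that a simple bipartite planar graph on $s\ge 3$ vertices has at most $2s-4$ edges, so it \emph{cannot} have minimum degree $4$. Since every essential vertex satisfies $\deg_G v=\tilde d_v\ge 4$ by~(iv), a trapped set $S$ (for which $G[S]=K^\bullet[S]$ is bipartite planar) must emit at least $\sum_{u\in S}\tilde d_u-2\,e(S)\ge 4s-2(2s-4)=8$ edges to the cut; combining this deficiency with planarity, the absence of multiple edges, and the requirement that the enveloping cycle be facial yields a contradiction. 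This is the same style of Euler/degree double counting already used in the proof of Lemma~\ref{L.induction}.

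Granting $3$-connectivity, Whitney's theorem gives a unique embedding of $G$ on the sphere up to reflection, i.e.\ the rotation system at each vertex is determined up to a global reversal. In particular the rotation at the apex $w$ is the cyclic order of the enveloping cycle, so the faces incident to $w$ are exactly the triangles $w\alpha_i\alpha_{i+1}$; deleting $w$ merges them into a single face bounded by the enveloping cycle. Declaring that face unbounded recovers the drawing of $\KG$, with the cycle vertices $\alpha_i,\beta_j$ appearing along its boundary in a determined cyclic order and the rest of $K^\bullet$ (hence of $K$) drawn inside; reinstating the subdivided pendants on the spokes is then forced. The only remaining freedom is the global reflection, which corresponds precisely to the two orientations of the enveloping cycle, so fixing the orientation of the cycle fixes the drawing.

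The step I expect to be the main obstacle is establishing $3$-connectivity of $(K^\bullet)^\odot$: ruling out $2$-cuts requires a careful case analysis of how a trapped region can attach to the cut and to the enveloping cycle, and making the Euler-type count above actually close. An alternative route, avoiding Whitney, would be to induct on $|V(K^\bullet)|$ via Lemmas~\ref{L.induction} and~\ref{L.consec}: remove a boundary vertex $v$ with $d_v^\bullet\le 2$ that appears consecutively, invoke inductive uniqueness for the smaller $\HG$, and verify that reattaching $v$ in the unbounded face is forced because its $\le 2$ neighbours occupy a determined consecutive arc of the boundary; the delicate point there is the same faciality and consecutivity bookkeeping.
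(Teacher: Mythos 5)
Your overall strategy coincides with the paper's: reduce to connected $K$ with $|V(K^\bullet)|\ge 2$, pass from $K^\odot$ to $(K^\bullet)^\odot$ by undoing subdivisions, prove $3$-connectivity, and invoke Whitney. The two places where your write-up differs are exactly the two places where it is not yet a proof. First, the $3$-connectivity count does not close as stated: knowing that a trapped set $S$ of inner vertices emits at least $4s-2e(S)\ge 8$ edges into a two-element cut $\{v,w\}$ contradicts nothing by itself, since $K_{2,4}$ is simple, bipartite and planar, so eight cut edges can land on two vertices without creating multiple edges or a $K_{3,3}$/$K_5$. The count can be rescued, but by applying the bipartite-planar bound to $G[S\cup\{v,w\}]$ rather than to $G[S]$: every edge incident to $S$ lies in $G[S\cup\{v,w\}]$, so $2e(S)+e(S,\{v,w\})\ge 4s$ while $e(S)+e(S,\{v,w\})+1\le 2(s+2)-4$, giving $e(S)\ge 2s-1>2s-4\ge e(S)$. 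You also need the preliminary observation that a component of $(K^\bullet)^\odot\setminus\{v,w\}$ avoiding the apex automatically avoids all cycle vertices and all input/output vertices of $K^\bullet$ (each $a_i^\bullet$ is tied to the cycle through $\alpha_i$), which is what guarantees every trapped vertex really has degree $\ge 4$ and handles the cuts containing a cycle vertex or the apex. The paper avoids all of this by a different mechanism: it reuses Lemma~\ref{L.induction} (and, for $2$-cuts, Lemma~\ref{L.consec}) to find a vertex of degree $\le 2$ inside the trapped piece, whose two attachments then force a multiple edge, contradicting~(v).

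Second, the final sentence of your argument — ``the only remaining freedom is the global reflection, which corresponds to the two orientations of the enveloping cycle'' — skips the issue to which the paper devotes its last three paragraphs. The mirror image of a drawing of $\KG=(K,\mathbf a,\mathbf b)$ is a drawing of the bilabelled graph with the boundary word $w=a_k\cdots a_1b_1\cdots b_l$ reversed; whenever the reversal of $w$ is a rotation of $w$ (e.g.\ $\mathbf b=(u,v,v,u)$), the reflected embedding is a \emph{second} candidate drawing of the \emph{same} $\KG$ with the enveloping cycle traversed in the prescribed order, and uniqueness is only saved if one shows the two embeddings actually coincide. The paper does this by proving, via the production rules (A)/(B) of Section~\ref{sec.finite} together with the $3$-connectivity just established, that a connected $\KG\in C$ whose boundary word is a palindrome up to rotation must be a single path, which is reflection-symmetric. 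Without an argument of this kind the claimed uniqueness statement is not established for such symmetric boundary words.
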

\begin{proof}
First of all, we need to assume that $K$ is connected. For the unconnected case, we can then use induction. If $K$ is not connected, then, up to rotation, we have $\KG=\HG_1\otimes\HG_2$. From induction, $\HG_1$ and $\HG_2$ have unique drawing and finally one can see that the only possible drawing of $\KG$ is to put $\HG_1$ and $\HG_2$ side by side.

We also need to assume that $|V(K^\bullet)|\ge 2$. But obviously all the graphs $\nullG$, $\MG^{k,l}$, $\XG^{k,l}$ have a~unique planar drawing being the only instances of $\KG$ with $|V(K^\bullet)|\le1$.

We will use the Whitney's theorem saying that every 3-connected graph has a~unique planar drawing. However, we cannot apply it to the graph $K^\odot$ directly since it is rarely 3-connected. Instead, we construct the graph $K^{\bullet\odot}$ from $K^\bullet$ the same way as $K^\odot$ is constructed from $K$. That is, add the \emph{enveloping cycle} $\alpha_k^\bullet,\dots,\alpha_1^\bullet,\beta_1^\bullet,\dots,\beta_k^\bullet$ and edges $\{\alpha_i^\bullet,a_i^\bullet\}$ and $\{\beta_i^\bullet,b_i^\bullet\}$ and finally one last vertex connected to all the vertices of the enveloping cycle. Now, one can see that $K^{\bullet\odot}$ differs from $K^\odot$ just by the fact that some edges in $K^{\bullet\odot}$ are two-paths in $K^\odot$, but anyway $K^{\bullet\odot}$ has a~unique planar drawing if and only if $K^\odot$ has.

So, take any $\KG\in C(k,l)$ such that $K$ (equivalently $K^\bullet$) is connected and $|V(K^\bullet)|\ge2$. We are going to prove that $K^{\bullet\odot}$ is 3-connected. To illustrate the idea, let us first prove that it is 2-connected. For the sake of contradiction, suppose that $v$ is a~separating vertex in $K^{\bullet\odot}$. Since $K^\bullet$ is connected, we must have $v\in K^\bullet$. Take some connected component $H^\bullet$ of $K^{\bullet\odot}\setminus\{v\}$ that does not contain the enveloping cycle and denote by $m$ the number of edges between $v$ and $H'$. This component induces a~bilabelled graph $\HG=(H,\emptyset,\mathbf{c})\in C$ with $c_1^\bullet,\dots,c_m^\bullet$ being the neighbours of $v$ in $H^\bullet$. (If $v$ is odd then $H=H^\bullet$ and $c_i=c_i^\bullet$ are the neighbours of $v$. If $v$ is even, then we have to introduce new vertices $c_1,\dots,c_m$ that split the edges between $v$ and $H^\bullet$.) Now, $H^\bullet$ must contain at least two vertices since if there was a~single vertex $u$, then we would have $\tilde d_u=1$ in $\KG$, which violates~(ii). Consequently, we can use Lemma \ref{L.induction}, to find a~vertex $u$ in $H^\bullet$ with $d_u^\bullet\le 2$ in $\HG$. This means that $u$ appears at least twice in $(c_1^\bullet,\dots,c_m^\bullet)$ and hence there is a~double edge between $u$ and $v$ in $\KG$, which is a~contradiction.

The proof of 3-connectedness is similar just slightly more complicated. Let $\{v,w\}$ be the separating pair. Suppose first that both are actually elements of $K^\bullet$. Again, we find a~connected component $H^\bullet$ of $K^{\bullet\odot}\setminus\{v,w\}$ not containing the enveloping cycle and define the corresponding bilabelled graph $\HG=(H,\emptyset,\mathbf{c})$. Some of the vertices $c_i^\bullet$ are connected to $v$ in $K^\bullet$ and some are connected to $w$. Now, one can modify the proof of Lemma \ref{L.induction} to show that there actually must be at least two vertices $u_1,u_2$ in $\HG$ such that $d_{u_i}^\bullet\le 2$ in $\HG$ and $H^\bullet\setminus\{u_i\}$ is connected. So, according to Lemma \ref{L.consec}, both $u_1$, $u_2$ must appear at least twice and consecutively in $(c_1^\bullet,\dots,c_m^\bullet)$. From this, it follows that at least of those two vertices must have a~double edge with either $v$ or $w$.

Secondly, assume $\{v,\alpha\}$ is a~separating pair, where $v\in K^\bullet$ and $\alpha$ is an element of the enveloping cycle adjacent to some vertex $a\in K^\bullet$. Let $H^\bullet$ be the component of $K^{\bullet\odot}\setminus\{v,\alpha\}$ not containing the enveloping cycle but containing $a$ (such a~component exists, otherwise $v$ is a~separating vertex and $K^{\bullet\odot}$ is not even 2-connected, which was already excluded above). Now, again there are at least two vertices $u$ in $H^\bullet$ such that $d_u^\bullet\le 2$ in $\HG$, so one of them is not $a$ and must be connected with a~multiple edge to $v$ in $\KG$.

A pair of vertices of the enveloping cycle can be separating only if $k+l=2$, which happens only for $\MG^{1,1}$ and this case was already discussed above as it does not satisfy $|V(K^\bullet)|\ge 2$.

Finally, take $\KG=(K,\mathbf{a},\mathbf{b})\in C(k,l)$ and let us show that the orientation of the drawing of $K^\odot$ is uniquely determined by the orientation of the enveloping cycle. So, suppose that reflecting a~drawing of $K^\odot$ preserves the order of the enveloping cycle. In other words, define the word $w:=a_k\cdots a_1b_1\cdots b_l$ and suppose that the reflection of $w$ (reading the word backwards) is a~rotation of $w$. For now, suppose also that $K$ is connected. We prove that $K$ consists of a~single path and hence its drawing is equivalent to the reflection.

Without loss of generality, we can assume $w=x^ma_1^{i_1}\cdots a_k^{i_k}y^na_k^{i_k}\cdots a_1^{i_1}$, where $a_i\neq a_{i+1}$. Recall the algorithm generating all elements of $C$. The word $w$ must have been constructed by applying rules (A) and (B) on some initial word $w_0=a^k$, $k\in\nobreak2\N\setminus\{2\}$. It is enough to show that we only used rule (A) as this leads to the graph consisting of a~single path. Note that the last operation must have created either $x^m$ or $y^n$, without loss of generality, assume the second possibility. Let us undo the last operation that created $w$. We claim that it must have been the operation (A). If it was (B), then the predecessor of $w$ was of the form $w'=x^ma_1^{i_1}\cdots a_k^{i_k}uva_k^{i_k}\cdots a_1^{i_1}$. If neither of $u$ and $v$ equals $a_k$, then it is a~contradiction with the generalization of Lemma \ref{L.induction} since there is only one letter appearing consecutively at least twice in $w'$. If, say $u=a_k$, then it is a~contradiction with the 3-connectivity of the associated graph we proved above since removing the vertex $a_k$ and the element of the enveloping cycle corresponding to $v$ isolates the vertex $v$.

So, $w$ was created from some $w'$ by applying rule (A). But this means that $w'$ was also a~``palindrom up to rotation'', so by induction $w$ was created only by repeated application of rule (A). 

If $K$ is not connected, we again use induction. We have $\KG=\HG_1\otimes\HG_2$ (up to rotation) and by induction hypothesis both $\HG_1$ and $\HG_2$ have planar drawings that are equivalent to their reflections. Then this must hold also for $\KG$.
\end{proof}

\bibliographystyle{halpha}
\bibliography{mybase}

\end{document}